\documentclass[12pt,reqno]{amsart}


\usepackage[toc,page]{appendix}
\newcommand{\stoptocwriting}{%
  \addtocontents{toc}{\protect\setcounter{tocdepth}{-5}}}
\newcommand{\resumetocwriting}{%
  \addtocontents{toc}{\protect\setcounter{tocdepth}{\arabic{tocdepth}}}}

\usepackage{amsmath}
\usepackage{xcolor}
\usepackage{placeins}
\usepackage{amsfonts,amsmath,amsthm}
\usepackage{amssymb,epsfig}
\usepackage{enumerate}
\usepackage{fullpage}
\usepackage[utf8]{inputenc}
\usepackage{mathpazo}
\usepackage{mathtools}

\usepackage{eucal}
\usepackage[euler-digits]{eulervm}
\usepackage[unicode=true]{hyperref}
\hypersetup{colorlinks = true}
\hypersetup{
     colorlinks,
     linkcolor={black!10!red},
     linkbordercolor = {black!100!red},
     citecolor={blue}
}


\usepackage{pdfsync}

\usepackage{geometry}
\geometry{verbose,tmargin=2.5cm,bmargin=2.5cm,lmargin=2.5cm,rmargin=2.5cm,headheight=3.5cm}

\usepackage{graphicx}
\usepackage{epsfig}
\usepackage{tikz}
\usepackage{caption}
\usepackage{color} 
\definecolor{vert}{rgb}{0,0.6,0}

\usepackage{comment}
\numberwithin{figure}{section}

\theoremstyle{plain}
\newtheorem{thm}{Theorem}[section]

\newtheorem{defn}{Definition}

\newtheorem{lem}[thm]{Lemma}
\newtheorem{cor}[thm]{Corollary}

\theoremstyle{remark}
\newtheorem{rem}{\bf{Remark}}
\numberwithin{equation}{section}



\newcommand{\R}{\mathbb{R}}





\newcommand{\rmC}{\mathrm{C}}
\newcommand{\rup}{\rightharpoonup}




\usepackage{import}
\usepackage{xifthen}
\usepackage{pdfpages}
\usepackage{transparent}
\newcommand{%
    
    \import{./figs/}{.pdf_tex}
}[1]{%
    
    \import{./figs/}{#1.pdf_tex}
}
\begin{document}
\title[Rate of convergence]
{\textsc{Remarks on the vanishing viscosity process of state-constraint Hamilton--Jacobi equations}}
\thanks{The authors are supported in part by NSF grant DMS-1664424 and NSF CAREER grant DMS-1843320. The work of Son N. T. Tu is supported in part by the GSSC Fellowship, University of Wisconsin--Madison.}
\begin{abstract}
We investigate the convergence rate in the vanishing viscosity process of the solutions to the subquadratic state-constraint Hamilton-Jacobi equations. We give two different proofs of the fact that, for nonnegative Lipschitz data that vanish on the boundary, the rate of convergence is $\mathcal{O}(\sqrt{\varepsilon})$ in the interior. Moreover, the one-sided rate can be improved to $\mathcal{O}(\varepsilon)$ for nonnegative compactly supported data and $\mathcal{O}(\varepsilon^{1/(p-\frac{1}{2})})$ (where $1<p\leq 2$ is the exponent of the gradient term) for nonnegative data $f\in \mathrm{C}^2(\overline{\Omega})$ such that $f = 0$ and $Df = 0$ on the boundary. Our approach relies on deep understanding of the blow-up behavior near the boundary and semiconcavity of the solutions.
\end{abstract}
\author{Yuxi Han}
\address[Y. Han]
{
Department of Mathematics,
University of Wisconsin Madison, 480 Lincoln  Drive, Madison, WI 53706, USA}
\email{yuxi.han@wisc.edu}
\author{Son N. T. Tu}
\address[S. N.T. Tu]
{
Department of Mathematics,
University of Wisconsin Madison, 480 Lincoln  Drive, Madison, WI 53706, USA}
\email{thaison@math.wisc.edu}
\date{July 31, 2025.}
\keywords{first-order Hamilton--Jacobi equations; second-order Hamilton--Jacobi equations; state-constraint problems; optimal control theory; rate of convergence; viscosity solutions; semiconcavity; boundary layer.}
\subjclass[2010]{
35B40, 
35D40, 
49J20, 
49L25, 
70H20 
}
\maketitle
\setcounter{tocdepth}{1}

\section{Introduction}\label{sec:intro}
\subsection{Settings} Let $\Omega$ be an open, bounded and connected domain in $\mathbb{R}^n$ with $\mathrm{C}^2$ boundary, $f\in \mathrm{C}(\overline{\Omega})\cap W^{1,\infty}(\Omega)$. For $\varepsilon>0$, let $u^\varepsilon\in \mathrm{C}^2(\Omega)$ (see \cite{Lasry1989}) be the solution to
 \begin{equation}\label{eq:PDEepsa}
    \begin{cases}
      u^\varepsilon(x) + H(Du^\varepsilon(x)) - f(x) - \varepsilon \Delta u^\varepsilon(x) = 0 \qquad
    \text{in}\;\Omega, \vspace{0cm}\\
    \displaystyle  \lim_{\mathrm{dist}(x,\partial \Omega)\to 0} u^\varepsilon(x) = +\infty
    \end{cases}
\end{equation}
 where $H:\R^n\to\R^n$ is a given continuous Hamiltonian. The solution that blows up uniformly on the boundary is also called a \emph{large solution}. A typical subquadratic Hamiltonian that has been considered in the literature is $H(\xi) = |\xi|^p$ for $\xi\in \R^n$ where $1<p\leq 2$. We focus on this Hamiltonian in our paper, and the equation of interest becomes
\begin{equation}\label{eq:PDEeps}
    \begin{cases}
      u^\varepsilon(x) + |Du^\varepsilon(x)|^p - f(x) - \varepsilon \Delta u^\varepsilon(x) = 0 \qquad
    \text{in}\;\Omega, \vspace{0cm}\\
    \displaystyle  \lim_{\mathrm{dist}(x,\partial \Omega)\to 0} u^\varepsilon(x) = +\infty.
    \end{cases} \tag{PDE$_\varepsilon$}
\end{equation}
When $1<p\leq 2$, equation \eqref{eq:PDEeps} describes the value function associated with a minimization problem in stochastic optimal control with state constraints (\cite{fabbri_stochastic_2017, Lasry1989}). We briefly recall the setting as follows. For a given stochastic control $\alpha(\cdot)$, we look for a solution (a state process) of the feedback control system
\begin{equation}\label{eq:u2}
\begin{cases}
dX_t = \alpha\left(X_t\right)dt + \varepsilon\sqrt{2}\,d\mathbb{B}_t \qquad \text { for } t >0,\\
\;\; X_0 = x.
\end{cases}
\end{equation}
We omit all the domains and destination spaces for simplicity. Here, $\mathbb{B}_t\sim \mathcal{N}(0,t)$ is the Brownian motion with mean zero and variance $t$. To constrain the state $X_t$ inside $\overline{\Omega}$, we define
\begin{equation*}
    \widehat{\mathcal{A}}_x = \Big\lbrace\alpha(\cdot)\in \mathrm{C}(\Omega): \mathbb{P}(X_t\in \Omega) = 1\;\text{for all}\;t\geq 0\Big\rbrace
\end{equation*}
and hope to minimize the cost function
\begin{equation*}
    u^\varepsilon(x) = \inf_{\alpha\in \widehat{\mathcal{A}}_x} \mathbb{E}\left[\int_0^\infty e^{-t}L\big(X_t,\alpha(X_t) \big)\;dt\right],
\end{equation*}
where $L(x,v):\overline{\Omega}\times \mathbb{R}^n \to \mathbb{R}$ is the running cost. In our setting, we consider the Lagrangian $L(x,v) = c|v|^q+f(x)$ from the classical mechanics, where $q>1$, $f\in \mathrm{C}(\overline{\Omega})$ is nonnegative, and $c$ is chosen so that the corresponding Hamiltonian (defined via the Legendre transform) is $H(x,\xi) = |\xi|^p - f(x)$. Using the Dynamic Programming Principle (see
\cite{Lasry1989}), we expect the value function \eqref{eq:u2} to satisfy the Hamilton--Jacobi equation with state constraint, that is,
\begin{equation}\label{eq:HJB}
\begin{cases}
    u^\varepsilon(x) + |Du^\varepsilon(x)|^p - f(x) - \varepsilon \Delta u^\varepsilon(x) \leq  0 \qquad\text{in}\;\Omega,\\
   u^\varepsilon(x) + |Du^\varepsilon(x)|^p - f(x) - \varepsilon \Delta u^\varepsilon(x) \geq  0 \qquad\text{on}\;\overline{\Omega},
\end{cases}
\end{equation}
in the viscosity solution framework. It means that $u^\varepsilon$ is a subsolution in $\Omega$ and a supersolution on $\overline{\Omega}$. It is clear that any solution of \eqref{eq:PDEeps} satisfies \eqref{eq:HJB}. It turns out that when $1<p
\leq 2$, the boundary condition \eqref{eq:PDEeps} correctly describes the state-constraint problem \eqref{eq:HJB} (see \cite{Lasry1989}).

We are interested in studying the asymptotic behavior of $\{u^\varepsilon\}_{\varepsilon>0}$ as $\varepsilon\rightarrow 0^+$. Heuristically, the solution of the second-order state-constraint problem converges to that of a first-order state-constraint problem associated with the deterministic optimal control, namely,
\begin{equation}\label{eq:PDE0}
    \begin{cases}
       u(x) + |Du(x)|^p - f(x) \leq 0\;\qquad\text{in}\;\Omega,\\
       u(x) + |Du(x)|^p - f(x) \geq 0\;\qquad\text{on}\;\overline{\Omega}.
    \end{cases} \tag{PDE$_0$}
\end{equation}
 Equation \eqref{eq:PDE0} admits a unique viscosity solution in the space $\rmC(\overline{\Omega})$, which is also the maximal viscosity subsolution among all the viscosity subsolutions in $\rmC(\overline{\Omega})$(see \cite{Capuzzo-Dolcetta1990,Soner1986}). In terms of optimal control, as $\varepsilon \to 0^+$, the stochastic control system \eqref{eq:u2} becomes a deterministic control system. In particular, let $\mathcal{A}_x = \{\zeta\in \mathrm{AC}([0,\infty);\overline{\Omega}): \zeta(0)=x\}$ and we have
\begin{equation*}
    u(x) = \inf_{\zeta\in \mathcal{A}_x} \int_0^\infty e^{-t}L\big(\zeta(t),\do{\zeta}(t)\big)dt
\end{equation*}
where $L(x,v)$ is the Legendre transform of $H(x,\xi) = |\xi|^p - f(x)$.

The problem is interesting since in the limit we no longer have blowing up behavior, as $u\in \mathrm{C}(\overline{\Omega})$. In this paper, we investigate the rate of convergence of $u^\varepsilon \to u$ as $\varepsilon\to 0^+$. What is intriguing and delicate here is the blow-up behavior of $u^\varepsilon$ in a narrow strip near $\partial \Omega$ as $\varepsilon\to 0^+$. This is often called the boundary layer theory in the literature.

Note that a comparison principle holds for \eqref{eq:PDE0} since we always assume $\Omega$ is an open, bounded and connected domain in $\mathbb{R}^n$ with $\mathrm{C}^2$ boundary (\cite{Capuzzo-Dolcetta1990,Soner1986}). Equation \eqref{eq:PDEeps} follows the setting of \cite{Lasry1989}, where the specific structure of the Hamiltonian $H(x,\xi) = |\xi|^p - f(x)$ enables more explicit estimates for the solution of \eqref{eq:PDEeps}.

\subsection{Relevant literature} There is a
vast amount of work in the literature on viscosity solutions with state constraints and large solutions. We would like to first mention that the problem \eqref{eq:PDE0} with general Hamiltonian is a huge subject of research interest, started with the pioneer work \cite{Soner1986} (see also \cite{ishii_new_1996,ishii_class_2002}).  Some of the recent work related to the asymptotic behavior of solutions of \eqref{eq:PDE0} can be found in \cite{ishii_vanishing_2017,kim_state-constraint_2020,mitake_asymptotic_2008,tu2021vanishing}. The problem \eqref{eq:PDEeps}
was first studied in \cite{Lasry1989} and subsequently many works have been done in understanding deeper the properties of solutions (see \cite{ Porretta_a,alessio_asymptotic_2006,Bandle_1994, marcus_existence_2003} and the references therein). The time-dependent version of \eqref{eq:PDEepsa} was also studied by many works, for instance, \cite{barles_generalized_2004,barles_large_2010,leonori_local_2011,moll_large_2012} and the references therein. 

In terms of rate of convergence, that is, the convergence rate of $u^\varepsilon\to u$ as $\varepsilon\to 0^+$, to the best of our knowledge, such a question has not been studied in the literature. For the case where \eqref{eq:PDEeps} is equipped with the Dirichlet boundary condition, a rate $\mathcal{O}(\sqrt{\varepsilon})$ is well known with multiple proofs (see \cite{Bardi1997,crandall1984, tran_hamilton-jacobi_2021}). We believe part of the reason why the problem is intriguing is that the blow-up behavior of the solutions near the boundary of $\Omega$ is complicated and deserves further investigation.

\subsection{Main results} Define
\begin{equation*}
    \alpha = \frac{2-p}{p-1} \in [0,\infty).
\end{equation*}
Without loss of generality, we can assume $\min_{\overline{\Omega}} f = 0$. The main results of the paper are the following theorems.

\begin{thm}\label{main_thm1} Let $\Omega$ be an open, bounded and connected subset of $\R^n$ with $\mathrm{C}^2$ boundary. Assume that $1 < p\leq 2$ and $f$ is nonnegative and Lipschitz with $f = 0$ on $\partial\Omega$. Let $u^\varepsilon$ be the unique solution to \eqref{eq:PDEeps} and $u$ be the unique solution to \eqref{eq:PDE0}. Then there exists a constant $C$ independent of $\varepsilon\in (0,1)$ such that for $x\in \Omega$,
\begin{align*}
    &-C\sqrt{\varepsilon}\leq  u^\varepsilon(x) - u(x)\leq C\left(\sqrt{\varepsilon} + \frac{\varepsilon^{\alpha+1}}{d(x)^\alpha}\right), \qquad\; 1<p < 2,\\
    &-C\sqrt{\varepsilon}\leq  u^\varepsilon(x) - u(x)\leq C\left(\sqrt{\varepsilon} + \varepsilon|\log(d(x))|\right), \qquad \; p = 2.
\end{align*}
\end{thm}

\begin{rem} To the best of our knowledge, this theorem is new in the literature. The precise boundary behavior is very delicate and deserves further investigation. The condition $f = 0$ on $\partial\Omega$ is a little bit restrictive but can be explained as follows. One can see that the solution to \eqref{eq:PDEeps} is continuous with respect to data $f$ in the weak$^*$ topology of $L^\infty(\Omega)$ (see \cite{Lasry1989}). If $f = 0$ on $\partial\Omega$, we
can approximate $f$ \emph{uniformly} in $L^\infty(\Omega)$ by a sequence of compactly supported functions, where a convergence rate of $u^\varepsilon-u$ is easier to obtain. Indeed, for $1<p<2$, it is natural to consider the doubling variable method with
\begin{equation}\label{heur1}
    \psi^\varepsilon(x) :=  u^\varepsilon(x) - \frac{C_\alpha\varepsilon^{\alpha+1}}{d(x)^{\alpha}}
\end{equation}
and $u(x)$, where $d(x)$ is a $\mathrm{C}^2$ extension of the distance function to the boundary and $C_\alpha\varepsilon^{\alpha+1}d(x)^{-\alpha}$ is the leading order term in the asymptotic expansion of $u^\varepsilon(x)$ as $d(x) \to 0^{+}$ with $C_\alpha:= \alpha^{-1}(\alpha+1)^{\alpha+1}$. If we take the derivative of \eqref{heur1} formally, it becomes
\begin{equation}\label{heur2}
    D\psi^\varepsilon(x) =  Du^\varepsilon(x) + C_\alpha\alpha \left(\frac{\varepsilon}{d(x)}\right)^{\alpha+1}Dd(x).
\end{equation}
We will see that $D\psi^\varepsilon(x)$ is uniformly bounded if $d(x)\geq \varepsilon$ (Lemma \ref{lem:boundDu^eps}). Indeed,
\begin{equation*}
    -C_\alpha \alpha \left(\frac{\varepsilon}{d(x)}\right)^{\alpha+1}Dd(x)
\end{equation*}
is more or less the leading order term in the asymptotic expansion of $Du^\varepsilon$ near $\partial\Omega$. Heuristically, this means that the boundary layer is $\mathcal{O}(\varepsilon)$ from the boundary.

However, to get a useful estimate by the doubling variable method, at the maximum point $x_0$ of $\psi^\varepsilon(x) - u(x)$, we need to have $d(x_0)\geq \varepsilon^{\gamma}$ for $\gamma<1$ so that the latter term in \eqref{heur2} vanishes as $\varepsilon\to 0^+$. In the other case where $d(x_0) < \varepsilon^{\gamma}$, we introduce a new localization idea, that is, we construct a blow-up solution in the ball of radius $\varepsilon^\gamma$ from the boundary. Finally, a technical (and common for the doubling variable method) computation leads to $\gamma = 1/2$.


\end{rem}

As a different approach, the convexity of $|\xi|^p$ and the semiconcavity of the solution to \eqref{eq:PDE0} give us a better one-sided $\mathcal{O}(\varepsilon)$ estimate for nonnegative compactly supported $f$ which is semiconcave in its support (see Theorem \ref{thm:rate_doubling2}). Such an one-sided $\mathcal{O}(\varepsilon)$ rate is well known for the Dirichlet boundary problem (see \cite{Bardi1997,tran_hamilton-jacobi_2021}). Moreover, the result in Theorem \ref{thm:rate_doubling2} further provides us with a better one-sided estimate $\mathcal{O}(\varepsilon^{1/p})$ than that in Theorem \ref{main_thm1}, as in Corollary \ref{cor:key}. We recall that $f$ is (uniformly) semiconcave in $\overline{\Omega}$ with linear modulus (or semiconcavity constant) $c>0$ if
\begin{equation*}
    f(x+h)-2f(x)+f(x-h)\leq c|h|^2, \quad \forall x, h \in \mathbb{R}^n \,\, \text{such that} \, \, x+h, x, \text{and}\, \, x-h \in \overline{\Omega}.
\end{equation*}
Note that any $f\in \mathrm{C}^2(K)$ with $K$ compact is semiconcave on $K$ with the constant
$$c = \max \left\{ D_{\xi\xi}f(x): |\xi|=1, x\in K\right\}$$
in the above definition.
It is well known that the solution $u$ to \eqref{eq:PDE0} is \emph{locally} semiconcave given $f$ is uniformly semiconcave in $\overline{\Omega}$. Using tools from the optimal control theory, we provide the explicit blow-up rate of the semiconcavity modulus of $u(x)$ when $x$ approaches $\partial\Omega$. As an application, we can improve the rate of convergence as follows.

\begin{thm}[One-sided $\mathcal{O}(\varepsilon)$ rate for nonnegative compactly supported data]\label{thm:rate_doubling2}



Under the conditions of Theorem \ref{main_thm1}, suppose $f$ also satisfies the following conditions:
\begin{itemize}
    \item $f$ is semiconcave in its support;
    \item $f$ has a compact support in $\Omega_\kappa := \{x\in \Omega: \mathrm{dist}(x,\Omega) > \kappa\}$ for some $\kappa\in(0,\delta_0)$\,, where $\delta_0$ is defined in \eqref{def:delta_0}.
\end{itemize}
Then there exist two constants $\nu > 1$ and $C$ independent of $\varepsilon$  and $\kappa$ such that
\begin{equation*}
\begin{aligned}
    u^\varepsilon(x) - u(x) &\leq\frac{\nu C_\alpha \varepsilon^{\alpha+1}}{d(x)^\alpha} + C \left(\left(\frac{\varepsilon}{\kappa}\right)^{\alpha+1}+\left(\frac{\varepsilon}{\kappa}\right)^{\alpha+2}\right) + \frac{Cn\varepsilon}{\kappa},  &\quad p <2, \\
    u^\varepsilon(x) - u(x) & \leq \nu \varepsilon \log\left( \frac{1}{d(x)}\right)+C \left(\left(\frac{\varepsilon}{\kappa}\right)+\left(\frac{\varepsilon}{\kappa}\right)^2\right)+ \frac{Cn\varepsilon}{\kappa} , &\quad p=2.
\end{aligned}
\end{equation*}
\end{thm}

\begin{rem}\label{rem:C} If $f\in \mathrm{C}^2_c(\R^n)$, then the last term $\left(Cn\varepsilon\right)\kappa^{-1}$ in the equations above can be improved to $nc\varepsilon$, where $c$ is the semiconcavity constant of $f$.
\end{rem}

\begin{figure}[ht]
    \centering
    \includegraphics[scale = 0.35]{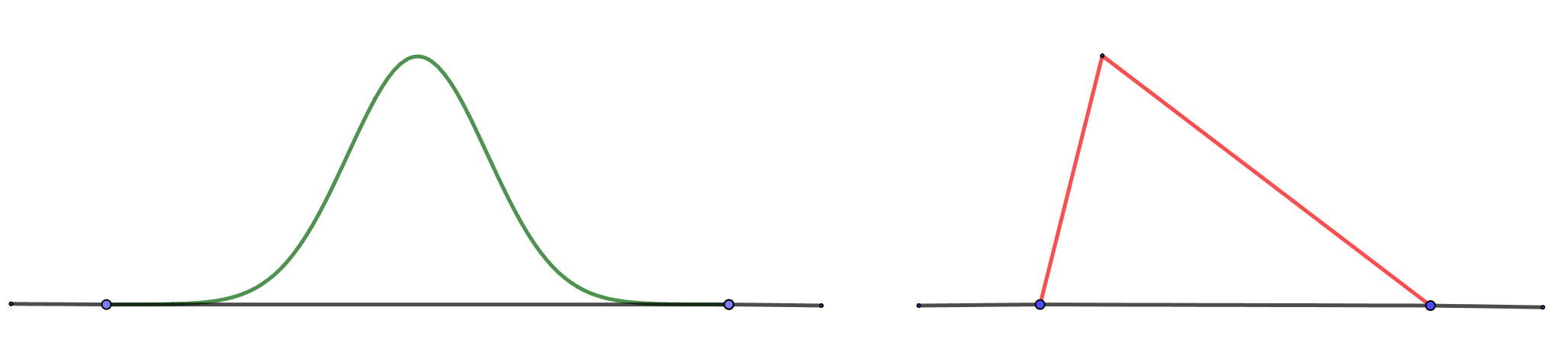}
    \caption{Two different cases where $f$ can and cannot be extended to a semiconcave function in the whole space by setting $f = 0$ outside $\Omega$. The one on the right corresponds to the assumption in Theorem
    \ref{thm:rate_doubling2}, while the one on the left corresponds to the assumption in Remark \ref{rem:C}.}
    \label{fig:Domains2}
\end{figure}

\begin{cor}[One-sided $\mathcal{O}(\varepsilon^{1/(p-\frac{1}{2})})$ rate]\label{cor:key} Let $1<p\leq 2$. If $f \in \mathrm{C}^2(\overline{\Omega})$ is nonnegative, $f = 0$ and $Df = 0$ on $\partial\Omega$, then there exists a constant $C$ independent of $\varepsilon\in (0,1)$ such that
\begin{align*}
    &-C\varepsilon^{1/2}\leq  u^\varepsilon(x) - u(x)\leq C\left(\varepsilon^{\frac{1}{p-\frac{1}{2}}}+ \frac{\varepsilon^{\alpha+1}}{d(x)^\alpha}\right)
\end{align*}
for all $x\in \Omega$.
\end{cor}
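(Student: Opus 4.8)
The lower bound $-C\varepsilon^{1/2}\le u^\varepsilon(x)-u(x)$ is immediate from Theorem~\ref{main_thm1}, since $f\in\mathrm{C}^2(\overline{\Omega})$ is in particular nonnegative and Lipschitz with $f=0$ on $\partial\Omega$. For the upper bound the plan is to \emph{reduce to Theorem~\ref{thm:rate_doubling2} via a boundary cut-off of $f$ whose semiconcavity constant stays bounded as the cut-off scale shrinks}; this is exactly where the extra hypothesis $Df=0$ on $\partial\Omega$ enters.

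For small $\kappa\in(0,\delta_0)$ put $f_\kappa:=f\,\chi_\kappa$, where $\chi_\kappa(x)=\varphi(d(x)/\kappa)$ with $\varphi$ smooth, $\varphi\equiv0$ on $(-\infty,1]$ and $\varphi\equiv1$ on $[2,\infty)$. Then $f_\kappa\ge0$, $f_\kappa\in\mathrm{C}^2_c(\Omega)$ with support compactly contained in $\Omega_\kappa$, so $f_\kappa$ is admissible in Theorem~\ref{thm:rate_doubling2}. Because $f\in\mathrm{C}^2(\overline{\Omega})$ with $f=Df=0$ on $\partial\Omega$, a Taylor expansion at the nearest boundary point gives $0\le f(x)\le C\,d(x)^2$ and $|Df(x)|\le C\,d(x)$ for $0<d(x)<2\kappa$. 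On the transition layer $\{\kappa<d(x)<2\kappa\}$ one has $|D\chi_\kappa|\le C\kappa^{-1}$ and $|D^2\chi_\kappa|\le C\kappa^{-2}$, so from $D^2 f_\kappa=\chi_\kappa D^2 f+2\,Df\otimes D\chi_\kappa+f\,D^2\chi_\kappa$ we get $\|f_\kappa\|_{\mathrm{C}^2(\overline{\Omega})}\le C$ with $C$ depending only on $\|f\|_{\mathrm{C}^2(\overline{\Omega})}$ and $\Omega$, \emph{not} on $\kappa$; in particular $f_\kappa$ is semiconcave with a constant $c$ uniform in $\kappa$. Moreover $\|f-f_\kappa\|_{L^\infty(\Omega)}\le\sup_{\{d<2\kappa\}}f\le C\kappa^2$.

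Let $u^\varepsilon_\kappa$ and $u_\kappa$ solve \eqref{eq:PDEeps} and \eqref{eq:PDE0} with $f$ replaced by $f_\kappa$. Since both problems are monotone in the zeroth-order term, the comparison principles for the large solution and for \eqref{eq:PDE0} give $\|u^\varepsilon-u^\varepsilon_\kappa\|_{L^\infty}\le\|f-f_\kappa\|_{L^\infty}$ and $\|u-u_\kappa\|_{L^\infty}\le\|f-f_\kappa\|_{L^\infty}$, both $\le C\kappa^2$. Applying Theorem~\ref{thm:rate_doubling2} to $f_\kappa$ and invoking Remark~\ref{rem:C} (valid since $f_\kappa\in\mathrm{C}^2_c(\Omega)$) to replace its last term $Cn\varepsilon/\kappa$ by $nc\,\varepsilon\le Cn\varepsilon$ with $c$ uniform in $\kappa$, we obtain for $p<2$
\[
u^\varepsilon(x)-u(x)\le\frac{\nu C_\alpha\varepsilon^{\alpha+1}}{d(x)^\alpha}+C\Big(\big(\tfrac{\varepsilon}{\kappa}\big)^{\alpha+1}+\big(\tfrac{\varepsilon}{\kappa}\big)^{\alpha+2}+\varepsilon+\kappa^2\Big),
\]
with $\nu$ and $C$ independent of $\varepsilon$ and $\kappa$ (they depend on $f_\kappa$ only through $\|f_\kappa\|_{\mathrm{C}^2}$ and $c$, which are uniform). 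Now take $\kappa=\varepsilon^{1/(2p)}$, legitimate once $\varepsilon<\delta_0^{2p}$ (for larger $\varepsilon$ the inequality is trivial after enlarging $C$, both sides being comparable to $\varepsilon^{\alpha+1}/d(x)^\alpha$ near $\partial\Omega$). Using $\alpha+1=\tfrac1{p-1}$ one checks $\kappa^2=\varepsilon^{1/p}$, $\varepsilon\le\varepsilon^{1/p}$, $(\varepsilon/\kappa)^{\alpha+1}=\varepsilon^{(2p-1)/(2p(p-1))}\le\varepsilon^{1/p}$ (because $\tfrac{2p-1}{2(p-1)}\ge1$), and $(\varepsilon/\kappa)^{\alpha+2}\le(\varepsilon/\kappa)^{\alpha+1}$ since $\varepsilon/\kappa<1$; hence all error terms are $\le C\varepsilon^{1/p}$ and $u^\varepsilon(x)-u(x)\le C\big(\varepsilon^{1/p}+\varepsilon^{\alpha+1}/d(x)^\alpha\big)$.

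The main obstacle is the cut-off construction, namely showing the semiconcavity constant of $f_\kappa$ does not blow up as $\kappa\to0$. This rests on the quadratic decay $f=O(d^2)$ and the linear decay $Df=O(d)$ near $\partial\Omega$, which precisely absorb the $\kappa^{-1}$ and $\kappa^{-2}$ growth of $D\chi_\kappa$ and $D^2\chi_\kappa$. If only $f=0$ but $Df\neq0$ on $\partial\Omega$, the term $Df\otimes D\chi_\kappa$ would make the semiconcavity constant grow like $\kappa^{-1}$, the improvement of Remark~\ref{rem:C} would be lost, and one would recover only the $\mathcal{O}(\sqrt{\varepsilon})$ rate of Theorem~\ref{main_thm1}.
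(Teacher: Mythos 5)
Your proposal is correct and follows essentially the same route as the paper: it constructs the boundary cut-off $f_\kappa = f\,\chi(d(\cdot)/\kappa)$ (this is precisely the paper's Lemma~\ref{conca}), uses $f=Df=0$ on $\partial\Omega$ to keep $\|D^2 f_\kappa\|_{L^\infty}$ uniformly bounded, applies Theorem~\ref{thm:rate_doubling2} together with the improvement of Remark~\ref{rem:C}/\ref{rem:nice}, and then optimizes $\kappa$ as a power of $\varepsilon$. The only superficial difference is that you carry the sharper bound $\|f-f_\kappa\|_{L^\infty}\le C\kappa^2$ and compensate by choosing $\kappa=\varepsilon^{1/(2p)}$, whereas the paper records only $\|f-f_\kappa\|_{L^\infty}\le C\kappa$ and takes $\kappa=\varepsilon^{1/p}$; both balance to the same $\mathcal{O}(\varepsilon^{1/p})$ rate.
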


\begin{rem} While the second approach looks more powerful, we need the gradient bound of $u^\varepsilon$ (Lemma \ref{lem:boundDu^eps}), the blow-up rate of the semiconcavity constant of $u$ (Theorem \ref{thm:newsemi}), and higher regularity on $f$. On the other hand, the first approach by doubling variable is relatively simple and does not require any explicit asymptotic behavior of $Du^\varepsilon$, except the fact that it is locally bounded. 
\end{rem}

\begin{rem}
In all of the theorems above, $f$ is assumed to be nonnegative with $f=0$ on $\partial \Omega$. Note that by adding a constant to the solution $u^\varepsilon$ of \eqref{eq:PDEeps}, we can see that all the results hold true for Lipschitz data that equals to its minimum on the boundary, meaning
\[
f(z)=\min_{x\in\partial\Omega}f(x)=\min_{x\in\overline{\Omega}}f(x),\quad \forall z\in\partial\Omega.
\]
\end{rem}

\subsection*{Organization of the paper} Section 2 contains some preliminary results. The proof of Theorem \ref{main_thm1} is given in Section 3. Then in Section 4, we give the proof of Theorem \ref{thm:rate_doubling2} and Corollary \ref{cor:key}. Finally, the proofs of some useful lemmas are presented in Appendix.

\section{Preliminaries}\label{sec:prelim}
Let $\Omega$ be an open, bounded and connected subset of $\mathbb{R}^n$ with boundary $\partial\Omega$ of class $C^2$. For small $\delta>0$, we denote $\Omega_\delta = \{x\in \Omega: \mathrm{dist}(x,\Omega) > \delta\}$ and $\Omega^\delta = \{x\in \mathbb{R}^n: \mathrm{dist}(x,\overline{\Omega}) < \delta\}$.
\begin{figure}[ht]
    \centering
    \includegraphics[scale = 0.45]{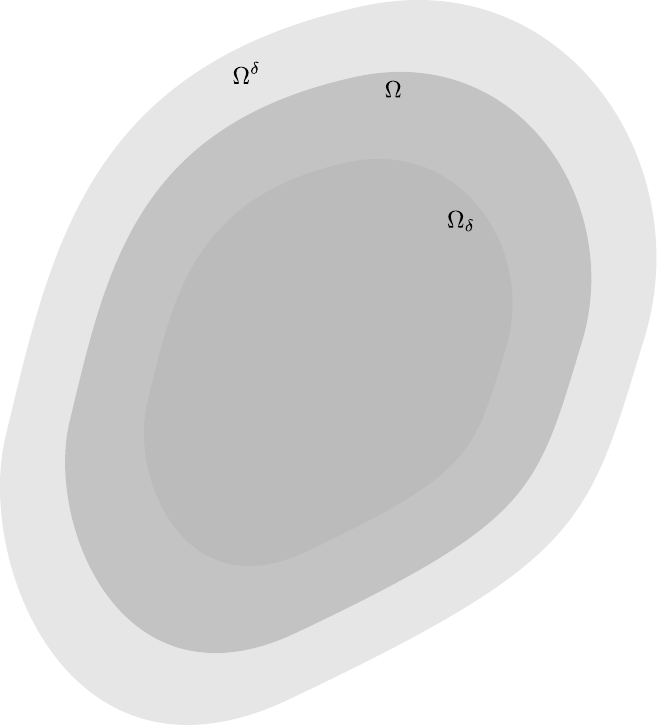}
    \caption{The domain $\Omega$ and its variations $\Omega_\delta, \Omega^\delta$.}
    \label{fig:Domains}
\end{figure}

\begin{defn} Define
\begin{equation}\label{def:delta_0}
    \delta_{0,\Omega} =\frac{1}{2}\sup \big\{ \delta > 0: x\mapsto\mathrm{dist}(x,\partial\Omega)\;\text{is}\;\rmC^2\;\text{in}\;\Omega^\delta \backslash\overline{\Omega}_{\delta} \big\}.
\end{equation}
We will simply write $\delta_0$ instead of $\delta_{0,\Omega}$ when the underlying domain is understood.
\end{defn}

We refer the reader to \cite{gilbarg_elliptic_2001} for the regularity of the distance function defined in the definition above. We then extend $\mathrm{dist}(x,\partial\Omega)$ to a function $d(x)\in \mathrm{C}^2(\mathbb{R}^n)$ such that
\begin{equation}\label{e:distance_def}
    \begin{cases}
    d(x)\geq 0\;\text{for}\;x\in\Omega\;\text{with}\;d(x) = +\mathrm{dist}(x,\partial\Omega)\;\text{for}\;x\in \Omega\backslash \Omega_{\delta_0},\\
    d(x)\leq 0\;\text{for}\;x\notin \Omega\;\text{with}\;d(x) = -\mathrm{dist}(x,\partial\Omega)\;\text{for}\;x\in \Omega^{\delta_0}\backslash \Omega.
    \end{cases}
\end{equation}
Note that $d(x) = \mathrm{dist}(x,\partial\Omega)$ and $|D d(x)| = 1$ in the classical sense in $\Omega^{\delta_0}\backslash \Omega_{\delta_0}$. Let
\begin{equation}\label{boundond}
   K_0:= \max_{x\in \overline{\Omega}}|d(x)|, \qquad K_1 := \max_{x\in \overline{\Omega}} |D d(x)|, \qquad\text{and}\qquad K_2 := \max_{x\in \overline{\Omega}} |\Delta d(x)|.
\end{equation}
\noindent Denote by $\mathcal{L}^\varepsilon:\rmC^2(\Omega)\to \rmC(\Omega)$ the operator
\begin{equation*}
    \mathcal{L}^\varepsilon[u](x) :=   u(x) + |Du(x)|^p - f(x) - \varepsilon \Delta u(x), \qquad x\in \Omega.
\end{equation*}

\subsection{Local gradient estimate} For $\varepsilon \in (0,1)$ and $p>1$, we state an a priori estimate for $\rmC^2$ solutions to \eqref{eq:PDEeps} (\cite[Appendix]{Lasry1989}). Since we are working with nice solutions, the proof is relatively simple by the classical Bernstein method (\cite{bernstein_sur_1910}), which is provided in Appendix for the reader's convenience.

\begin{thm}\label{thm:grad_1} Let $f\in \rmC(\overline{\Omega})\cap W^{1,\infty}(\Omega)$ and $u^\varepsilon \in \mathrm{C}^2(\Omega)$ be a solution to $\mathcal{L}^\varepsilon[u^\varepsilon] = 0$ in $\Omega$. Let $m:= \max_{\overline{\Omega}}f(x)$. Then for $\delta>0$, there exists $C_\delta = C(m,p,\delta, \Vert D f\Vert_{L^\infty(\Omega)})$ such that
\begin{equation*}
    \sup_{x\in \overline{\Omega}_\delta} \Big(|u^\varepsilon(x)|+|Du^\varepsilon(x)|\Big) \leq C_\delta
\end{equation*}
for $\varepsilon$ small enough.
\end{thm}


\subsection{Well-posedness of \eqref{eq:PDEeps}} In this section, we recall the existence and the uniqueness of solutions to \eqref{eq:PDEeps} for $1<p\leq 2$ and $f\in \mathrm{C}(\overline{\Omega})\cap W^{1,\infty}(\Omega)$. In fact, the assumption of $f$ can be relaxed to $f\in L^\infty(\Omega)$ (\cite{Lasry1989}).

\begin{thm}\label{thm:wellposed1<p<2} Let $f\in \mathrm{C}(\overline{\Omega})\cap W^{1,\infty}(\Omega)$. There exists a unique solution $u^\varepsilon\in \mathrm{C}^2(\Omega)$ of \eqref{eq:PDEeps} such that:
\begin{itemize}
    \item[(i)] If $1<p< 2$, then
\begin{equation}\label{rate_p<2}
    \lim_{d(x)\to 0}\left( u^\varepsilon(x) \,d(x)^\alpha \right)= C_\alpha \varepsilon^{\alpha+1},
\end{equation}
where $\alpha = (p-1)^{-1}(2-p)$ and $C_\alpha = \alpha^{-1}(\alpha+1)^{\alpha+1}$.
\item[(ii)] If $p=2$, then
\begin{equation}\label{rate_p=2}
    \lim_{d(x)\to 0} \left(-\frac{u^\varepsilon(x)}{\log(d(x))}\right) = \varepsilon.
\end{equation}
\end{itemize}
Furthermore, $u^\varepsilon$ is the maximal subsolution among all the subsolutions $v\in W^{2,r}_{\mathrm{loc}}(\Omega)$ for all $r\in [1,\infty)$ of \eqref{eq:PDEeps}.
\end{thm}
\noindent This is Theorem I.1 in \cite{Lasry1989} with an explicit dependence on $\varepsilon$. The proof of this theorem is carried out explicitly in Appendix for later use. Also, it is useful to note that $\alpha+1 = (p-1)^{-1}$. More results on the behavior of the gradient of $u^\varepsilon$ can be found in \cite{alessio_asymptotic_2006} and Lemma \ref{lem:boundDu^eps}, where we show $|Du^\varepsilon|\leq C + C\left(\frac{\varepsilon}{d(x)}\right)^{\alpha+1}$. We believe Lemma \ref{lem:boundDu^eps} is new in the literature.

\subsection{Convergence results} We first state the following lemma (\cite{Capuzzo-Dolcetta1990}), which characterizes the solution to the first-order state-constraint  equation \eqref{eq:PDE0}.
\begin{lem}\label{lem:max} Let $u\in \rmC(\overline{\Omega})$ be a viscosity subsolution of \eqref{eq:PDE0} such that, for any viscosity subsolution $v\in \rmC(\overline{\Omega})$ of \eqref{eq:PDE0}, one has $v\leq u$ on $\overline{\Omega}$. Then $u$ is a viscosity supersolution of \eqref{eq:PDE0} on $\overline{\Omega}$.
\end{lem}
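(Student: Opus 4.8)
The plan is to argue by contradiction via the classical \emph{bump-up} construction, which shows that the maximal subsolution of a state-constraint problem is automatically a supersolution. Suppose $u$ is not a viscosity supersolution of \eqref{eq:PDE0} at some $x_0\in\overline{\Omega}$. Then there is a test function $\varphi\in\rmC^1$ such that $u-\varphi$ has a local minimum at $x_0$, $u(x_0)=\varphi(x_0)$, and
\[
    \varphi(x_0)+\left\lvert D\varphi(x_0)\right\rvert^p-f(x_0)<0.
\]
Replacing $\varphi(x)$ by $\varphi(x)-\left\lvert x-x_0\right\rvert^2$ does not change $\varphi(x_0)$ or $D\varphi(x_0)$ and makes the minimum strict, so I may assume $u-\varphi$ has a \emph{strict} local minimum at $x_0$. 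By continuity of $\varphi$, $D\varphi$ and $f$, I would fix $r>0$ and $\eta>0$ so that
\[
    \varphi(x)+\left\lvert D\varphi(x)\right\rvert^p-f(x)\leq-\eta\qquad\text{for all }x\in B(x_0,2r)\cap\overline{\Omega};
\]
then for every $\delta\in(0,\eta)$ the smooth function $\varphi+\delta$ is a classical, hence viscosity, subsolution of \eqref{eq:PDE0} in $B(x_0,2r)\cap\Omega$ ($\rmC^1$ regularity suffices since the equation is first order).

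Next I would use strictness of the minimum: by compactness there is $m_r>0$ with $u-\varphi\geq m_r$ on $\{\left\lvert x-x_0\right\rvert=r\}\cap\overline{\Omega}$. Choosing $\delta\in\bigl(0,\min\{\eta,m_r\}\bigr)$ and setting
\[
    w(x):=\begin{cases}\max\bigl\{u(x),\,\varphi(x)+\delta\bigr\}&\text{if }x\in\overline{B(x_0,r)}\cap\overline{\Omega},\\[4pt]u(x)&\text{if }x\in\overline{\Omega}\setminus B(x_0,r),\end{cases}
\]
the two formulas agree near $\{\left\lvert x-x_0\right\rvert=r\}\cap\overline{\Omega}$ (where both equal $u$, because $\delta<m_r$), so $w\in\rmC(\overline{\Omega})$. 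Since the pointwise maximum of two viscosity subsolutions is a viscosity subsolution, and since $u$ and $\varphi+\delta$ are both subsolutions of \eqref{eq:PDE0} in $B(x_0,r)\cap\Omega$ while $w=u$ on the remainder of $\Omega$, the glued function $w$ is a viscosity subsolution of \eqref{eq:PDE0} in $\Omega$ and lies in $\rmC(\overline{\Omega})$, hence is an admissible competitor in the class $v\in\rmC(\overline{\Omega})$ appearing in the hypothesis. But $w(x_0)=\max\{u(x_0),u(x_0)+\delta\}=u(x_0)+\delta>u(x_0)$, contradicting $w\leq u$ on $\overline{\Omega}$. Therefore no such $x_0$ exists, i.e.\ $u$ is a viscosity supersolution of \eqref{eq:PDE0} on $\overline{\Omega}$.

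I expect the only delicate point to be making the localization rigorous so that $w$ is a \emph{genuine global} competitor — continuous up to $\partial\Omega$ and a subsolution on all of the open set $\Omega$ — and, in particular, the case $x_0\in\partial\Omega$. One might worry that a boundary point requires a separate boundary analysis, but because the subsolution requirement in \eqref{eq:PDE0} is imposed only on the open set $\Omega$, the bump-up near a boundary point is carried out exactly as in the interior; the choice $\delta<m_r$ guarantees the seamless gluing across $\{\left\lvert x-x_0\right\rvert=r\}$ in both cases. The two standard facts invoked — that a $\rmC^1$ function solving the equation pointwise is a viscosity subsolution, and that the maximum of two viscosity subsolutions is a viscosity subsolution — hold for $H(\xi)=\left\lvert\xi\right\rvert^p$ (indeed for any continuous $H$), so no structural hypothesis beyond what is already assumed is needed.
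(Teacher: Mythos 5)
Your proposal is correct and follows essentially the same Perron bump-up argument as the paper: both make the minimum strict by subtracting $\left\lvert x-x_0\right\rvert^2$, bump the test function up by a small amount so that it remains a strict subsolution in a small ball but lies below $u$ on the ball's boundary, and glue to produce a continuous subsolution exceeding $u$ at $x_0$, contradicting maximality. The only cosmetic difference is that the paper packages the two modifications into a single auxiliary function $\varphi^\varepsilon(x)=\varphi(x)-\left\lvert x-x_0\right\rvert^2+\varepsilon$, while you carry them out as two separate steps.
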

\noindent Again, the proof of Lemma \ref{lem:max} is given in Appendix for the reader's convenience.

\begin{lem}\label{lem:lower-bound} Assume $1<p\leq 2$. Let $u^\varepsilon\in \mathrm{C}^2(\Omega)$ be the solution to \eqref{eq:PDEeps}. We have $\{  u^\varepsilon\}_{\varepsilon>0}$ is uniformly bounded from below by a constant independent of $\varepsilon$. More precisely, $  u^\varepsilon \geq \min_\Omega f$ and $  u\geq \min_\Omega f$.
\end{lem}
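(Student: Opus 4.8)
The plan is to prove the two lower bounds separately, using only the comparison structure of the two equations together with the variational (control-theoretic) representations recalled in the introduction.

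\textbf{Lower bound for $u$.} Since $u\in\rmC(\overline\Omega)$ is a viscosity supersolution of \eqref{eq:PDE0} on $\overline\Omega$ (by Lemma \ref{lem:max}, as $u$ is the maximal subsolution), I would evaluate the supersolution inequality at a point $x_*\in\overline\Omega$ where $u$ attains its minimum. If $x_*$ is an interior minimum, then $0$ is a valid test gradient, so $u(x_*)+|0|^p-f(x_*)\geq 0$, i.e. $u(x_*)\geq f(x_*)\geq\min_\Omega f$. If the minimum is attained only on $\partial\Omega$, the state-constraint supersolution condition still holds there in the appropriate (constrained) sense, and the constant test function argument applies; alternatively, since $f=0$ on $\partial\Omega$ and $f\geq 0$, one gets $u(x_*)\geq 0=\min_{\overline\Omega} f$ directly. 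Either way $u\geq\min_\Omega f$ on $\overline\Omega$. (One may also argue purely variationally: for any admissible $\zeta\in\mathcal A_x$, $\int_0^\infty e^{-t}L(\zeta,\dot\zeta)\,dt\geq\int_0^\infty e^{-t}f(\zeta(t))\,dt\geq(\min_\Omega f)\int_0^\infty e^{-t}\,dt=\min_\Omega f$, using $L(x,v)=c|v|^q+f(x)\geq f(x)$; taking the infimum over $\zeta$ gives $u(x)\geq\min_\Omega f$.)

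\textbf{Lower bound for $u^\varepsilon$.} For the viscous equation the clean argument is again by the maximum principle: $u^\varepsilon\in\rmC^2(\Omega)$ and $\mathcal L^\varepsilon[u^\varepsilon]=0$, so at an interior minimum point $x_0$ of $u^\varepsilon$ over $\overline\Omega_\delta$ one has $Du^\varepsilon(x_0)=0$ and $\Delta u^\varepsilon(x_0)\geq 0$, whence $u^\varepsilon(x_0)=f(x_0)+\varepsilon\Delta u^\varepsilon(x_0)-|Du^\varepsilon(x_0)|^p\geq f(x_0)\geq\min_\Omega f$. The only subtlety is that $u^\varepsilon$ blows up to $+\infty$ near $\partial\Omega$, so the infimum of $u^\varepsilon$ over $\Omega$ is actually attained at an interior point (for $\delta$ small enough the boundary of $\Omega_\delta$ cannot be a minimizer since $u^\varepsilon$ is large there), which legitimizes the interior-minimum computation. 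Thus $u^\varepsilon\geq\min_\Omega f$ on $\Omega$. Alternatively, and in the same spirit as the representation formula \eqref{eq:valueeps}: for any $\alpha\in\widehat{\mathcal A}_x$ the state process stays in $\overline\Omega$ almost surely, so
\begin{equation*}
\E\!\left[\int_0^\infty e^{-t}L\big(X_t,\alpha(X_t)\big)\,dt\right]\geq \E\!\left[\int_0^\infty e^{-t}f(X_t)\,dt\right]\geq (\min_\Omega f)\int_0^\infty e^{-t}\,dt=\min_\Omega f,
\end{equation*}
and taking the infimum over admissible controls gives $u^\varepsilon(x)\geq\min_\Omega f$.

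\textbf{Uniform boundedness below.} Combining the two displays, both $u$ and $u^\varepsilon$ are bounded below by $\min_\Omega f$, a constant independent of $\varepsilon$; in particular $\{u^\varepsilon\}_{\varepsilon>0}$ is uniformly bounded from below. I expect the main (though minor) obstacle to be the rigorous handling of the blow-up of $u^\varepsilon$ at $\partial\Omega$ when invoking the maximum principle — one must justify that the infimum over $\Omega$ is an attained interior minimum rather than an infimum approached near the boundary — but since $u^\varepsilon\to+\infty$ uniformly on $\partial\Omega$ and $u^\varepsilon\in\rmC^2(\Omega)$, this is straightforward. The variational route sidesteps the issue entirely, at the cost of relying on the optimal-control representations quoted from \cite{Lasry1989}.
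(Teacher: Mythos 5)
Your proposal is correct but takes a genuinely different route from the paper, at least for $u^\varepsilon$. The paper never touches the blown-up solution directly: it introduces the family $u^\varepsilon_m$ solving the Dirichlet problem \eqref{e:uepsm} with finite boundary data $m$, observes $u^\varepsilon_m \to u^\varepsilon$ pointwise as $m\to\infty$, compares $u^\varepsilon_m$ with the constant classical subsolution $\varphi \equiv \inf_\Omega f$ via the comparison principle for the uniformly elliptic Dirichlet problem (valid once $\varphi \leq m$ on $\partial\Omega$), and then passes to the limit in $m$. You instead argue on $u^\varepsilon$ directly, using the classical pointwise maximum principle at an interior minimizer (whose existence follows from the sublevel sets of $u^\varepsilon$ being compact subsets of $\Omega$ because of the blow-up). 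Both are rigorous; your version is shorter and self-contained once the interior-minimum existence is nailed down, while the paper's version sidesteps that issue at the cost of invoking the approximation $u^\varepsilon_m \to u^\varepsilon$ already built for the well-posedness theorem. For $u$, the paper applies the comparison principle for \eqref{eq:PDE0} with the same constant subsolution $\varphi$; your argument, testing the state-constraint supersolution property against a constant at a global minimizer of $u$, is essentially a re-derivation of that specific comparison instance, so the difference is quoting versus reproving. The optimal-control representations you mention are a legitimate third route, relying on \cite{Lasry1989}. One small caveat: your ``alternatively'' remark for a boundary minimizer of $u$ uses $f \geq 0$ and $f = 0$ on $\partial\Omega$, which are \emph{not} hypotheses of this lemma (they are imposed only in the main theorems), so only your primary argument — the constrained supersolution test applied on $\overline\Omega$, i.e., including the boundary — covers the general case.
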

\begin{proof} For $m\in \mathbb{N}$, let $u^{\varepsilon}_m\in \mathrm{C}^2(\Omega)\cap \rmC(\overline{\Omega})$ solve the Dirichlet problem
\begin{equation}\label{e:uepsm}
    \begin{cases}
      u^{\varepsilon}_m(x) + |Du^{\varepsilon}_m(x)|^p - f(x) - \varepsilon \Delta u^{\varepsilon}_m(x) = 0 &\qquad
    \text{in}\;\Omega, \vspace{0cm}\\
    \quad \qquad\quad\qquad\qquad\qquad\qquad u^{\varepsilon}_m(x) = m &\qquad
    \text{on}\;\partial\Omega.
    \end{cases} \tag{PDE$_{\varepsilon,m}$}
\end{equation}
We have $u^{\varepsilon}_m(x) \to u^\varepsilon(x)$ in $\Omega$ as $m\to \infty$. Let $\varphi(x) \equiv  \inf_{\Omega} f$ for $x\in \overline{\Omega}$. Then $\varphi(x)$ is a classical subsolution of \eqref{e:uepsm} in $\Omega$ with
\begin{equation*}
    \varphi(x) =   \inf_\Omega f \leq m = u^\varepsilon_m(x) \qquad \text{on } \partial \Omega
\end{equation*}
for $m$ large enough.
By the comparison principle of the uniformly elliptic equation \eqref{e:uepsm},
\begin{equation*}
     \inf_\Omega f \leq u^{\varepsilon}_m(x) \qquad\text{for all}\;x\in \Omega.
\end{equation*}
As $m\to \infty$, we obtain $  u^\varepsilon \geq \min_\Omega f$. The inequality $  u\geq \min_{\Omega}f$ follows from the comparison principle of \eqref{eq:PDE0} applied to the supersolution $u$ on $\overline{\Omega}$ and the subsolution $\varphi$ in $\Omega$.
\end{proof}

We present here a simple proof of the convergence $u^\varepsilon \to u$ using Lemma \ref{lem:max}. See also \cite[Theorem VII.3]{Capuzzo-Dolcetta1990}.
\begin{thm}[Vanishing viscosity]\label{thm:qual} Let $u^\varepsilon$ be the solution to \eqref{eq:PDEeps}. Then there exists $u \in \mathrm{C}(\overline{\Omega})$ such that $u^\varepsilon \rightarrow u$ locally uniformly in $\Omega$ as $\varepsilon\rightarrow 0$ and $u$ solves \eqref{eq:PDE0}.
\end{thm}

\begin{proof}[Proof of Theorem \ref{thm:qual}] By the a priori estimate (Theorem \ref{thm:grad_1}),
\begin{equation}\label{e:priorie_eps}
    |u^\varepsilon(x)| + |Du^\varepsilon(x)| \leq C_\delta \qquad\text{for}\;x\in \overline{\Omega}_\delta.
\end{equation}
By the Arzel\`a--Ascoli theorem, there exists a subsequence $\varepsilon_j\to 0$ and a function $u\in \rmC(\Omega)$ such that $u^{\varepsilon_j}\to u$ locally uniformly in $\Omega$. From the stability of viscosity solutions, we easily deduce that
\begin{equation}\label{eq:u0int}
     u(x) + |Du(x)|^p - f(x) = 0 \qquad\text{in}\;\Omega.
\end{equation}
From Lemma \ref{lem:lower-bound}, $  u^\varepsilon(x)\geq \min_{\Omega} f$ and  $  u(x)\geq \min_{\Omega} f$ for all $x\in \Omega$. Together with \eqref{eq:u0int}, we obtain $|\xi|^p \leq \max_\Omega f - \min_\Omega f$ for all $\xi\in D^+u(x)$ and $x\in \Omega$. This implies there exists a constant $C_0$ such that
\begin{equation}\label{e:C0}
    |u(x) - u(y)| \leq C_0|x-y| \qquad\text{for all}\;x,y\in \Omega.
\end{equation}
Thus, we can extend $u$ uniquely to $u\in \rmC(\overline{\Omega})$. We use Lemma \ref{lem:max} to show that $u$ is a supersolution of \eqref{eq:PDE0} on $\overline{\Omega}$.

It suffices to show that $u\geq w$ on $\overline{\Omega}$, where $w\in \rmC(\overline{\Omega})$ is the unique solution to \eqref{eq:PDE0}. For $\delta>0$, let $u_\delta\in\rmC(\overline{\Omega}_\delta)$ be the  unique viscosity solution to
\begin{equation}\label{e:v_v}
    \begin{cases}
      u_\delta(x) + |Du_\delta(x)|^p-f(x) \leq 0 &\qquad\text{in}\;\Omega_\delta,\\
      u_\delta(x) + |Du_\delta(x)|^p - f(x) \geq 0 &\qquad\text{on}\;\overline{\Omega}_\delta.
    \end{cases}
\end{equation}
Since $u_\delta\rightarrow w$ locally uniformly as $\delta\rightarrow 0^+$ (see \cite{kim_state-constraint_2020}) and $w$ is bounded, $\{u_\delta\}_{\delta>0}$ is uniformly bounded. Let $v^\varepsilon_\delta\in \rmC^2(\Omega_\delta)\cap \rmC(\overline{\Omega}_\delta)$ be the unique solution to the Dirichlet problem
\begin{equation}\label{eq:vv_eps}
\begin{cases}
      v_\delta^\varepsilon(x) + |Dv_\delta^\varepsilon(x)|^p - f(x) = \varepsilon \Delta v_\delta^\varepsilon(x) &\qquad\text{in}\;\Omega_\delta,\\
    \;\quad\qquad\qquad\qquad\qquad v_\delta^\varepsilon = u_\delta &\qquad \text{on}\;\partial\Omega_\delta.
\end{cases}
\end{equation}
It is well known that $v^\varepsilon_\delta\to u_\delta$ uniformly on $\overline{\Omega}_\delta$ as $\varepsilon\to 0$.

For $\delta$ small enough, $u_\delta\leq u^\varepsilon$ on $\partial \Omega_\delta$. Hence, by the maximum principle, $v^\varepsilon_\delta \leq u^\varepsilon$ on $\overline{\Omega}_\delta$. Now we first let $\varepsilon\to 0$ to obtain $u_\delta \leq u$ on $\overline{\Omega}_\delta$.
Then let $\delta\rightarrow 0$ to get $w\leq u$ in $\Omega$, which implies $w\leq u$ on $\overline{\Omega}$ since both $w,u$ belong to $\rmC(\overline{\Omega})$.
\end{proof}

\section{Rate of convergence}

In this section, we focus on the rate of convergence for the case where $f\in \mathrm{W}^{1,\infty}(\Omega)\cap\mathrm{C}(\overline{\Omega})$ is nonnegative. As a consequence, $u^\varepsilon(x),u(x)\geq 0$ for $x\in \Omega$ by Lemma \ref{lem:lower-bound}. In our main results, we have an additional assumption that $f=0$ on $\partial \Omega$.


Before we show any result about the rate of convergence, we would like to mention a lower bound of $u^\varepsilon - u$ and some properties of $u$ from its optimal control formulation.
\begin{thm}\label{thm:lowerbound} Let $u^\varepsilon$ be the unique solution to \eqref{eq:PDEeps} and $u$ be the unique solution to \eqref{eq:PDE0}. Then there exists a constant $C$ independent of $\varepsilon$ such that
\begin{equation}\label{e:lower1}
    -C\sqrt{\varepsilon} \leq u^\varepsilon(x) - u(x) \qquad\text{for all}\;x\in \Omega.
\end{equation}
\end{thm}
\begin{proof} The proof relies on a well-known rate of convergence for vanishing viscosity of the viscous Hamilton--Jacobi equation with the Dirichlet boundary condition (see \cite{crandall1984,evans_adjoint_2010,fleming_convergence_1961,Tran2011}). Let $g(x) = u(x)$ for $x\in \partial\Omega$. Let $v^\varepsilon\in \mathrm{C}^2(\Omega)\cap \mathrm{C}(\overline{\Omega})$ be the unique viscosity solution to
\begin{equation*}
\left\{
    \begin{aligned}
          v^\varepsilon(x) + |Dv^\varepsilon(x)|^p - f(x) - \varepsilon \Delta v^\varepsilon(x) &= 0 \,\qquad\text{in}\;\Omega,\\
        v^\varepsilon(x) &= g(x) \ \ \  \text{on}\;\partial\Omega.
    \end{aligned}\right.
\end{equation*}
It is well known that $v^\varepsilon \to u$. Furthermore, there exists a positive constant $C$ independent of $\varepsilon\in (0,1)$ such that
\begin{equation}\label{e:cp1}
     |v^\varepsilon(x)  - u(x)| \leq C\sqrt{\varepsilon} \qquad\text{for}\;x\in \overline{\Omega}.
\end{equation}
By the comparison principle for \eqref{eq:PDEeps}, we have
\begin{equation}\label{e:cp2}
    v^\varepsilon(x)\leq u^\varepsilon(x) \qquad\text{for}\;x\in \Omega.
\end{equation}
From \eqref{e:cp1} and \eqref{e:cp2}, we obtain the lower bound \eqref{e:lower1}.
\end{proof}

\begin{rem} If $f\equiv C$ in $\Omega$ for some constant $C$, then \eqref{e:lower1} can be improved to $0\leq u^\varepsilon - u$, since in this case $u \equiv C$.
\end{rem}

\begin{lem}\label{lem:f=0} Assume $f\geq 0$ in $\Omega$. Then $u(x) = 0$ if and only if $f(x) = 0$. In particular, $f \equiv 0$ implies $u \equiv 0$.
\end{lem}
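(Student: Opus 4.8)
The plan is to use the optimal control representation $u(x) = \inf_{\zeta\in\mathcal{A}_x}\int_0^\infty e^{-t}L(\zeta(t),\dot\zeta(t))\,dt$, where $L(x,v) = c|v|^q + f(x)$ is the Legendre transform of $\xi\mapsto |\xi|^p - f(x)$, together with the fact that $f\geq 0$ implies $u\geq 0$ (Lemma \ref{lem:lower-bound}, or directly since $L\geq 0$). Recall that since $f=0$ on $\partial\Omega$ the constant function $0$ is a subsolution of \eqref{eq:PDE0} (indeed $0 + 0 - f(x)\leq 0$ in $\Omega$), so by the comparison principle $u\leq w$ forces nothing directly, but the control formula gives $u(x)\geq 0$ cleanly: every admissible trajectory has nonnegative cost.

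For the implication $f(x)=0 \Rightarrow u(x)=0$: since $f$ is continuous and $f\geq 0$, the set $\{f=0\}$ is closed; I would first treat the case where $x$ lies in the interior of $\{f=0\}$ by using the constant trajectory $\zeta\equiv x$, which is admissible (it stays in $\overline\Omega$) and has cost $\int_0^\infty e^{-t}(c\cdot 0 + f(x))\,dt = f(x) = 0$, hence $u(x)\leq 0$, and combined with $u\geq 0$ we get $u(x)=0$. For a general point $x$ with $f(x)=0$ that is not interior to $\{f=0\}$, I would instead argue through the PDE: $u$ is a viscosity subsolution, so at any point we have the subsolution inequality, and since $u\geq 0$ with $u(x)=0$... actually the cleanest route is: $u\geq 0$ everywhere and $u$ continuous, so if $u(x)=0$ then $x$ is a minimum of $u$; the supersolution property of $u$ on $\overline\Omega$ at an interior minimum gives, testing with the constant function, $0 = u(x) + 0 - f(x) \geq 0$ only if we can use $\xi = 0 \in D^- u(x)$, which holds at an interior minimum, yielding $f(x)\leq u(x) = 0$... wait, this gives the converse. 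Let me restate: for the forward direction $f(x)=0\Rightarrow u(x)=0$, I would approximate $x$ by points in $\{f=0\}$ if possible, or more robustly, use that $u(x)\leq \int_0^\infty e^{-t}L(\zeta,\dot\zeta)\,dt$ for a trajectory that moves quickly to and then rests at a point where $f$ is small; by continuity of $f$ at $x$ with $f(x)=0$, for any $\eta>0$ there is a neighborhood where $f<\eta$, and a short trajectory staying in that neighborhood then resting has cost $O(\eta) + (\text{kinetic cost of a short dash})$, which can be made $O(\eta)$; letting $\eta\to 0$ gives $u(x)\leq 0$.

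For the converse $u(x)=0\Rightarrow f(x)=0$: since $u\geq 0$ everywhere, $u(x)=0$ means $x$ is a global minimum of $u$ in $\overline\Omega$. As $u$ is a viscosity supersolution of \eqref{eq:PDE0} on $\overline\Omega$ and $x$ (if in $\Omega$) is an interior minimum, the zero function touches $u$ from below at $x$, so $0\in D^-u(x)$ and the supersolution inequality gives $u(x) + |0|^p - f(x)\geq 0$, i.e. $f(x)\leq u(x) = 0$, hence $f(x)=0$; if $x\in\partial\Omega$ then $f(x)=0$ by assumption anyway. Finally, $f\equiv 0$ implies $u\leq 0$ by the first implication applied pointwise (or directly, $0$ solves \eqref{eq:PDE0} and by uniqueness $u\equiv 0$), and $u\geq 0$ gives $u\equiv 0$. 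The main obstacle is the forward implication at boundary-of-$\{f=0\}$ points, where the constant trajectory still works to give $u(x)\leq f(x) = 0$ — so in fact the constant trajectory $\zeta\equiv x$ handles \emph{every} point with $f(x)=0$ at once, making that direction immediate; the only genuine content is the supersolution argument for the converse.
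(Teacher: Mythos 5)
Your proof is correct, and the converse direction $u(x)=0\Rightarrow f(x)=0$ uses a genuinely different argument from the paper's. The paper argues directly from the control formula: if $f>\varepsilon$ on $B_\delta(x_0)$, then by Jensen's inequality on the kinetic term plus the running-cost bound, any admissible trajectory satisfies $u(x_0)\geq \frac{\delta^q}{e^t t^{q-1}}+\varepsilon(1-e^{-t})$ (where $t$ is the exit time from the ball), and this has a positive infimum over $t\in(0,\infty]$, giving $u(x_0)>0$. You instead use the PDE: since $u\geq 0$ everywhere, $u(x)=0$ makes $x$ a global minimum of $u$ on $\overline{\Omega}$, so $0\in D^-u(x)$ and the supersolution test gives $u(x)+|0|^p-f(x)\geq 0$, hence $f(x)\leq u(x)=0$. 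Your route is shorter and purely viscosity-theoretic, and it also handles boundary points automatically because $u$ is a supersolution on all of $\overline{\Omega}$, so you can drop the ``$f(x)=0$ by assumption anyway'' fallback. For the forward direction, your constant-trajectory estimate $0\leq u(x)\leq f(x)$ gives the pointwise implication $f(x)=0\Rightarrow u(x)=0$ cleanly; the paper only explicitly records the global case $f\equiv 0\Rightarrow u\equiv 0$ via uniqueness, so your observation actually fills a small gap between what the lemma asserts and what its proof in the paper writes out. The preliminary case split and the dash-to-a-low-$f$-region idea in your writeup should simply be deleted, since, as you note at the end, the constant trajectory already covers every $x$ with $f(x)=0$ in one stroke.
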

\begin{proof} From Lemma \ref{lem:lower-bound}, we know $u\geq 0$ in $\overline{\Omega}$. The optimal control formula for the solution $u$ to \eqref{eq:PDE0} is (see \cite{Bardi1997,tran_hamilton-jacobi_2021})
\begin{equation}\label{e:foru}
    u(x) = \inf \left\lbrace \int_0^\infty e^{-  s}\Big(C_p|\dot{\eta}(s)|^{q} +f(\eta(s))\Big)ds: \eta\in \mathrm{AC}([0,\infty);\overline{\Omega}), \eta(0) = x\right\rbrace,
\end{equation}
where
\begin{equation*}
    C_p = \left(qp^\frac{1}{p-1}\right)^{-1} \qquad\text{and}\qquad \frac{1}{p} + \frac{1}{q} = 1.
\end{equation*}

Let $x\in \overline{\Omega}$ such that $f(x) = 0$. We can choose $\eta(s) = x$ for all $s\in [0,\infty)$ as an admissible path in \eqref{e:foru} to obtain that
\begin{equation*}
    u(x) \leq  \int_0^\infty e^{-  s}\Big(C_p|\dot{\eta}(s)|^{q} +f(\eta(s))\Big)ds = f(x) =  0.
\end{equation*}
As a consequence, $f\equiv 0$ implies $u\equiv 0$.

It is not hard to prove the converse by contradiction. Suppose $u(x_0) =0$ and $f(x_0)>0$. Then there exists $\varepsilon$, $\delta >0$ such that $f(x) > \varepsilon$ for all $x \in B_{\delta} (x_0)$. Let $\eta \in AC([0,\infty);\overline{\Omega})$ such that $\eta(0)=x_0$ and t be the time that $\eta$ first hits $\partial B_{\delta}(x_0)$. Note that $t$ could be $+\infty$. Then
\begin{equation*}
    \begin{aligned}
        \int_0^\infty e^{-s} \left(|\dot{\eta}(s)|^q +f(s )\right) ds &\geq \int_0^t e^{-s} \left(|\dot{\eta}(s)|^q +f(s )\right) ds\\
        &\geq  \frac{1}{e^t t^{q-1}}\left| \int_0^t \dot{\eta} (s) ds\right|^q+ \varepsilon \left(1-e^{-t} \right)\\
        &\geq \frac{\delta^q}{e^t t^{q-1}}+\varepsilon \left(1-e^{-t} \right),
    \end{aligned}
\end{equation*}
where we used Jensen's inequality in the second line.
This implies $u(x_0)>0$ since $q \geq 2$, which is a contradiction.
\end{proof}

\noindent The following lemma is about a crucial estimate that will be used. It is a refined construction of a supersolution for \eqref{eq:PDEeps}.
\begin{lem}\label{lem:super_refined} Let $\delta_0$ be defined as in \eqref{def:delta_0}. There exist positive constants $\nu = \nu(\delta_0)> 1$ and $C_\nu =\mathcal{O}\left(\delta_0^{-(\alpha+2)}\right)$ such that
\begin{equation}\label{e:superwa}
w(x) = \begin{cases}
    \displaystyle\frac{\nu C_\alpha \varepsilon^{\alpha+1}}{d(x)^\alpha} + \max f + C_\nu \varepsilon^{\alpha+2}, \qquad\;\;\,  p<2,\vspace{0.2cm}\\
    \displaystyle\nu \varepsilon \log\left(\frac{1}{d(x)}\right) + \max f+ C_\nu \varepsilon^2, \qquad  p=2,
\end{cases}
\end{equation}
is a supersolution of \eqref{eq:PDEeps} in $\Omega$.
\end{lem}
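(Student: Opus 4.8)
The plan is to show that $w$ satisfies the differential inequality $\mathcal{L}^\varepsilon[w](x)=w(x)+|Dw(x)|^p-f(x)-\varepsilon\Delta w(x)\ge 0$ at every $x\in\Omega$; the blow-up requirement in \eqref{eq:PDEeps} is automatic, since $d(x)\to 0$ forces $w(x)\to+\infty$. As $d>0$ and $d\in C^2$ on $\Omega$, we have $w\in C^2(\Omega)$, so it is enough to work with classical derivatives. I would split $\Omega$ into the near-boundary strip $\Omega\setminus\Omega_{\delta_0}=\{x\in\Omega:0<d(x)\le\delta_0\}$, where $|Dd|\equiv 1$, and the interior $\Omega_{\delta_0}$, where $d$ is merely a positive $C^2$ extension with $d\ge\delta_0$, $|Dd|\le K_1$ and $|\Delta d|\le K_2$.

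On the strip, and for $p<2$, a direct computation gives $Dw=-\nu C_\alpha\alpha\,\varepsilon^{\alpha+1}d^{-\alpha-1}Dd$, and, using $|Dd|=1$, $\Delta w=\nu C_\alpha\alpha(\alpha+1)\,\varepsilon^{\alpha+1}d^{-\alpha-2}-\nu C_\alpha\alpha\,\varepsilon^{\alpha+1}d^{-\alpha-1}\Delta d$. The crucial point is the algebra: since $\alpha+1=(p-1)^{-1}$ we have $p(\alpha+1)=\alpha+2$, and since $C_\alpha=\alpha^{-1}(\alpha+1)^{\alpha+1}$ we have $C_\alpha\alpha=(\alpha+1)^{\alpha+1}$; hence
\[
  |Dw|^p-\varepsilon\Delta w=(\nu^p-\nu)(\alpha+1)^{\alpha+2}\,\varepsilon^{\alpha+2}d^{-(\alpha+2)}+\nu C_\alpha\alpha\,\varepsilon^{\alpha+2}d^{-\alpha-1}\Delta d,
\]
and the leading coefficient $(\nu^p-\nu)(\alpha+1)^{\alpha+2}$ is \emph{strictly positive} because $\nu>1$ and $p>1$. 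This reflects that $C_\alpha\varepsilon^{\alpha+1}d^{-\alpha}$ is exactly the leading term of the expansion \eqref{rate_p<2}, so the factor $\nu>1$ overshoots it. Using $w-f\ge\nu C_\alpha\varepsilon^{\alpha+1}d^{-\alpha}+C_{\delta_0}\varepsilon^{\alpha+2}\ge 0$ and $|\Delta d|\le K_2$ together with $0<d\le\delta_0<1$, one gets on the strip
\[
  \mathcal{L}^\varepsilon[w]\ \ge\ \varepsilon^{\alpha+2}d^{-(\alpha+2)}\Big((\nu^p-\nu)(\alpha+1)^{\alpha+2}-\nu(\alpha+1)^{\alpha+1}K_2\,d\Big)\ \ge\ 0
\]
provided $\nu$ is chosen large enough that $(\nu^{p-1}-1)(\alpha+1)\ge K_2\delta_0$; this fixes $\nu=\nu(\delta_0)>1$ (with $\alpha$, $p$ and the domain understood).

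On the interior $\Omega_{\delta_0}$ every negative power of $d$ is controlled by a power of $\delta_0^{-1}$, so $|\varepsilon\Delta w|\le C'\varepsilon^{\alpha+2}$ with $C'=\mathcal{O}(\delta_0^{-(\alpha+2)})$, while $w-f+|Dw|^p\ge C_{\delta_0}\varepsilon^{\alpha+2}$; hence $\mathcal{L}^\varepsilon[w]\ge(C_{\delta_0}-C')\varepsilon^{\alpha+2}\ge 0$ once $C_{\delta_0}\ge C'$, which is precisely the claimed order $\mathcal{O}(\delta_0^{-(\alpha+2)})$. The case $p=2$ is analogous, with $w=\nu\varepsilon\log(1/d)+\max f+C_{\delta_0}\varepsilon^2$: the strip computation yields $|Dw|^2-\varepsilon\Delta w=(\nu^2-\nu)\varepsilon^2 d^{-2}|Dd|^2+\nu\varepsilon^2 d^{-1}\Delta d$, the factor $\nu^2-\nu>0$ plays the role of $(\nu^p-\nu)(\alpha+1)^{\alpha+2}$, and $\log(1/d)>0$ on $\{d\le\delta_0<1\}$, so the strip estimate closes once $\nu-1\ge K_2\delta_0$, the interior being absorbed into $C_{\delta_0}\varepsilon^2$ by the same argument. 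The main obstacle is not conceptual but bookkeeping: one must keep the exact exponents of $d$ and $\varepsilon$ straight and invoke $p(\alpha+1)=\alpha+2$ and $C_\alpha\alpha=(\alpha+1)^{\alpha+1}$ so that the two leading $d^{-(\alpha+2)}$ contributions — one from $|Dw|^p$, one from the $|Dd|^2$ part of $-\varepsilon\Delta w$ — merge into a single sign-definite quantity, which then dominates the subleading $\mathcal{O}(d^{-(\alpha+1)})$ curvature error uniformly on the strip; the additive constant $C_{\delta_0}\varepsilon^{\alpha+2}$ is exactly what handles the interior, and the choices $\nu=\nu(\delta_0)>1$ and $C_{\delta_0}=\mathcal{O}(\delta_0^{-(\alpha+2)})$ are dictated by these two inequalities. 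Note that $\varepsilon^{\alpha+2}$ factors out of every error term, so no smallness of $\varepsilon$ is required.
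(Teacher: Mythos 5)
Your proposal is correct and takes essentially the same route as the paper: the same split of $\Omega$ into the near-boundary strip $\{0<d\le\delta_0\}$ (where $|Dd|=1$) and the interior $\Omega_{\delta_0}$, the same algebraic identities $C_\alpha\alpha=(\alpha+1)^{\alpha+1}$ and $p(\alpha+1)=\alpha+2$ that merge the two $d^{-(\alpha+2)}$ terms into the sign-definite quantity $(\nu^p-\nu)(\alpha+1)^{\alpha+2}$, and the same interior absorption into $C_{\delta_0}\varepsilon^{\alpha+2}$ with $C_{\delta_0}=\mathcal{O}(\delta_0^{-(\alpha+2)})$. The only cosmetic difference is in how $\nu$ is pinned down: you impose $(\nu^{p-1}-1)(\alpha+1)\ge K_2\delta_0$ directly, while the paper makes the equivalent choice $\nu=1+C_2\delta_0$ explicit by way of the elementary inequality $(1+K_2\delta_0)^{\alpha+1}\le 1+C_2\delta_0$, which guarantees $\nu^{p-1}\ge 1+K_2\delta_0$.
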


\begin{proof} Let us first consider $1<p<2$. Recall from Theorem \ref{thm:wellposed1<p<2} that $C_\alpha^p \alpha^p = C_\alpha \alpha (\alpha+1)$ and $p(\alpha+1) = \alpha+2$. Compute
\begin{equation*}
    |D w(x)|^p = \nu^p\frac{(C_\alpha\alpha)^p\varepsilon^{p(\alpha+1)}}{d(x)^{p(\alpha+1)}} \left| Dd(x) \right|^p= \nu^p\frac{C_\alpha \alpha(\alpha+1)\varepsilon^{\alpha+2}}{d(x)^{\alpha+2}}  \left| Dd(x) \right|^p
\end{equation*}
and
\begin{equation*}
    \varepsilon\Delta w(x) = \nu\frac{C_\alpha\alpha(\alpha+1)\varepsilon^{\alpha+2}}{d(x)^{\alpha+2}} \left| Dd(x)\right|^2- \nu\frac{C_\alpha\alpha\varepsilon^{\alpha+2}\Delta d(x)}{d(x)^{\alpha+1}}.
\end{equation*}
We have
\begin{equation*}
    \begin{aligned}
        \mathcal{L}^\varepsilon\left[ w \right] =  &\frac{\nu C_\alpha\varepsilon^{\alpha+1}}{d(x)^\alpha} + \max f - f(x) + C_\nu \varepsilon^{\alpha+2} \\&+ \frac{C_\alpha \alpha (\alpha+1)\varepsilon^{\alpha+2}}{d(x)^{\alpha+2}}\left[\nu^p\left| Dd(x) \right|^p-\nu \left| Dd(x)\right|^2+\nu\frac{d(x)\Delta d(x)}{\alpha+1}\right] .
    \end{aligned}
\end{equation*}
\paragraph{\textbf{Case 1.}} If $0< d(x)\leq \delta_0$, we have $|D d(x)| = 1$. Recall that $K_2= \Vert \Delta d\Vert_{L^\infty}$ and observe
\begin{equation*}
    \left|\frac{d(x)\Delta d(x)}{\alpha+1}\right| \leq \frac{\delta_0\Vert \Delta d\Vert_{L^\infty}}{\alpha+1} \leq \frac{K_2\delta_0}{\alpha+1}\leq K_2\delta_0.
\end{equation*}
Therefore,
\begin{equation}\label{e:choose_nu}
    \begin{split}
        \nu^p-\nu +\nu\frac{d(x) \Delta d(x)}{(\alpha+1)} \geq \nu^p - \nu - \nu K_2\delta_0 = \nu\Big(\nu^{p-1} - (1+K_2\delta_0)\Big).
    \end{split}
\end{equation}
We will choose $\nu$ as follows. For $\gamma>1$, we have the inequality
\begin{equation}\label{e:ineq}
    \Big||x+y|^\gamma - |x|^\gamma\Big|\leq \gamma\Big(|x|+|y|\Big)^{\gamma-1}|y|
\end{equation}
for $x,y\in \mathbb{R}$, which implies that
\begin{equation*}
     0 \leq (1+K_2\delta_0)^{\alpha+1} - 1 \leq\underbrace{(\alpha+1)\left(1+K_2\delta_0\right)^\alpha K_2}_{C_2}\delta_0.
\end{equation*}
Hence, $(1+K_2\delta_0)^{\alpha+1} \leq 1 + C_2\delta_0$. Since $\alpha+1 = \frac{1}{p-1}$,
\begin{equation}\label{e:cru1}
    (1+K_2\delta_0) \leq (1+C_2\delta_0)^{\frac{1}{\alpha+1}} = (1+C_2\delta_0)^{p-1}.
\end{equation}
Choose $\nu = 1+C_2\delta_0$ in \eqref{e:choose_nu} and we obtain $\mathcal{L}[w]\geq 0$ in $\{x\in \Omega_\delta: \delta <d(x)\leq \delta_0\}$. \\

\paragraph{\textbf{Case 2.}} If $d(x)\geq \delta_0$, recall that $K_0 = \Vert d\Vert_{L^\infty}$ and $K_1 = \Vert D d\Vert_{L^\infty}$. And we have
\begin{align*}
    \mathcal{L}[w] = & \frac{\nu C_\alpha \varepsilon^{\alpha+1}}{d(x)^\alpha} + \max_{\Omega} f - f(x)\\
    &+  \nu^p\frac{C_\alpha\alpha(\alpha+1)\varepsilon^{\alpha+2}}{d(x)^{\alpha+2}}|D d(x)|^p - \nu \frac{C_\alpha \alpha(\alpha+1)\varepsilon^{\alpha+2}}{d(x)^{\alpha+2}}|D d(x)|^2
    + \nu \frac{C_\alpha \alpha \varepsilon^{\alpha+2}\Delta d(x)}{d(x)^{\alpha+1}} + C_\nu \varepsilon^{\alpha+2}\\
    \geq &\frac{C_\alpha \alpha(\alpha+1)\varepsilon^{\alpha+2}}{d(x)^{\alpha+2}}\left(\nu^p|D d(x)|^p - \nu |D d(x)|^2 + \nu \frac{d(x) \Delta d(x)}{\alpha+1}\right) + C_\nu \varepsilon^{\alpha+2}\\
    \geq & \left[C_\nu - C_3\left(\frac{1}{\delta_0}\right)^{\alpha+2}\right]\varepsilon^{\alpha+2},
\end{align*}
where
\begin{equation*}
    C_3 = C_\alpha\alpha(\alpha+1) \left(\nu^pK_1^p + \nu K_1^2 + \nu \frac{K_0K_2}{\alpha+1}\right).
\end{equation*}
We can choose $C_\nu = C_3\delta_0^{-(\alpha+2)}$ to obtain $\mathcal{L}[w]\geq 0$ in $\{x\in \Omega_\delta:d(x)\geq \delta_0\}$.
\smallskip

\noindent
If $p=2$, then $\alpha = 0$. We can easily see that the similar calculation holds true with $\nu := 1+K_2\delta_0$ and $C_\nu := \delta_0^{-2}\nu (\nu K_1^2 + K_1^2+ K_0K_2)$.
\end{proof}

Now we begin to present the rate of convergence for the special case where $f = C_f$ in $\Omega$ for some constant $C_f$.

\begin{thm}[Constant data]\label{thm:rate_doubling0} Assume $f\equiv C_f$ in $\Omega$. Let $u^\varepsilon$ be the unique solution to \eqref{eq:PDEeps} and $u \equiv C_f$ be the unique solution to \eqref{eq:PDE0}. Then there exists a constant $C$ independent of $\varepsilon\in(0,1)$ such that
    \begin{equation*}
    \begin{split}
    &0\leq u^\varepsilon(x) - u(x)\leq C \left(\frac{ \varepsilon^{\alpha+1}}{d(x)^\alpha} + \frac{\varepsilon^{\alpha+2}}{\delta_{0,\Omega}^{\alpha+2}}\right),  \qquad\qquad \;\;\; \text{if}\; 1<p<2,\\
    &0\leq u^\varepsilon(x) - u(x)\leq C \left(\varepsilon \mathrm{log}\left(\frac{1}{d(x)}\right) + \frac{\varepsilon^{2}}{\delta_{0,\Omega}^{2}}\right),  \qquad \text{if}\; p=2,
    \end{split}
\end{equation*}
for $x\in \Omega$, where $\delta_{0,\Omega}$ is defined as in \eqref{def:delta_0}. In particular,
\begin{itemize}
    \item[(i)] if $1<p<2$, we have $C_f\leq u^\varepsilon(x)\leq C_f + C\varepsilon$ for $x\in \Omega_\varepsilon = \{x\in \Omega: \mathrm{dist}(x,\partial\Omega) \geq \varepsilon\}$, and
    \item[(ii)] for any $K\subset\subset \Omega$, there holds $\Vert u^\varepsilon - u\Vert_{L^\infty(K)} \leq C\varepsilon^{\alpha+1}$ .
\end{itemize}
\end{thm}

\begin{proof} Lemma \ref{lem:f=0} implies $u \equiv C_f$ in $\Omega$. And Lemma \ref{lem:lower-bound} tells us $u^\varepsilon-u=u^\varepsilon-C_f \geq 0$.By the comparison principle of \eqref{eq:PDEeps} and Lemma \ref{lem:super_refined}, the conclusion follows.
\end{proof}

\begin{rem} The conclusion of Theorem \ref{thm:rate_doubling0} also holds if $f = C_f+  \mathcal{O}(\varepsilon^{\beta})$ for $\beta \geq \alpha+1$.
\end{rem}

Even this special case (Theorem \ref{thm:rate_doubling0}) is new in the literature. As an immediate consequence, we obtain the rate of convergence on any compact subset that is disjoint from the support of $f$.

\begin{cor} Assume $f$ is Lipschitz with compact support and $K$ is a connected compact subset of $\Omega$ that is disjoint from $\mathrm{supp}(f)$. Then there exists a constant $C = C(K)$ independent of $\varepsilon\in (0,1)$ such that
\begin{equation*}
     \Vert u^\varepsilon - u\Vert_{L^\infty(K)} \leq C\varepsilon^{\alpha+1}.
\end{equation*}
\end{cor}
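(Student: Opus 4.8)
The plan is to localize. Since $K$ lies at positive distance from $\supp f$, on a small ball around any point of $K$ the equation satisfied by $u^\varepsilon$ is exactly $\mathcal{L}^\varepsilon[u^\varepsilon]=0$ with \emph{zero} data, so one can compare $u^\varepsilon$ there with the corresponding large solution and then invoke the constant-data estimate of Theorem \ref{thm:rate_doubling0}.

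First I would set $r_0 := \tfrac12\min\{\mathrm{dist}(K,\partial\Omega),\,\mathrm{dist}(K,\supp f)\}$, which is strictly positive because $K$ is compact, $K\subset\Omega$ and $K\cap\supp f=\emptyset$. Fix $x_0\in K$ and put $B:=B_{r_0}(x_0)$; then $B$ is open, bounded, connected with $\rmC^2$ (indeed smooth) boundary, $B\subset\subset\Omega$, and $f\equiv 0$ on $B$. Let $U^\varepsilon_B\in\rmC^2(B)$ be the large solution on $B$ given by Theorem \ref{thm:wellposed1<p<2}, i.e. the solution of the analogue of \eqref{eq:PDEeps} on $B$ with $f\equiv 0$; by that theorem $U^\varepsilon_B$ is the maximal subsolution of this problem among all $v\in\bigcap_{r\in[1,\infty)}W^{2,r}_{\mathrm{loc}}(B)$. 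Since $u^\varepsilon\in\rmC^2(\Omega)$ restricts to such a function and solves $\mathcal{L}^\varepsilon[u^\varepsilon]=0$ in $B$ (here we use $f\equiv 0$ on $B$), maximality yields $u^\varepsilon\leq U^\varepsilon_B$ on $B$.

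Next I would apply Theorem \ref{thm:rate_doubling0} on the domain $B$ with constant data $C_f=0$, noting that the solution of \eqref{eq:PDE0} on $B$ with zero data is $\equiv 0$ by Lemma \ref{lem:f=0}. Part (ii) of that theorem, used with the compact set $\{x_0\}\subset\subset B$, gives $U^\varepsilon_B(x_0)=|U^\varepsilon_B(x_0)|\leq C_B\,\varepsilon^{\alpha+1}$ for $\varepsilon\in(0,1)$. The constant $C_B$ there depends only on the geometry of $B$, on $\mathrm{dist}(x_0,\partial B)=r_0$ and on $\delta_{0,B}$; but since every $B_{r_0}(x_0)$ is a translate of $B_{r_0}(0)$, all of these quantities are determined by $r_0$ (and $n$, $p$) alone, so $C_B=C(K)$ is independent of the choice of $x_0\in K$. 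Hence $u^\varepsilon(x_0)\leq C(K)\varepsilon^{\alpha+1}$ for every $x_0\in K$.

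Finally I would combine this with the lower bounds $u^\varepsilon(x_0)\geq\min_\Omega f=0$ (Lemma \ref{lem:lower-bound}) and $u(x_0)=0$ (Lemma \ref{lem:f=0}, since $f(x_0)=0$) to obtain $0\leq u^\varepsilon(x_0)-u(x_0)\leq C(K)\varepsilon^{\alpha+1}$, and then take the supremum over $x_0\in K$. The only step that is more than a routine citation is the uniformity of the constant coming from Theorem \ref{thm:rate_doubling0} on $B_{r_0}(x_0)$ over $x_0\in K$, and this is immediate from translation invariance of the balls $B_{r_0}(x_0)$; in particular the connectedness hypothesis on $K$ is not actually used in this argument.
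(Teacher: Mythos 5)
Your proof is correct and follows essentially the same strategy as the paper's: localize to a subdomain avoiding $\mathrm{supp}\,f$, compare $u^\varepsilon$ with the zero-data large solution there (you via the maximality clause of Theorem \ref{thm:wellposed1<p<2}, the paper via the comparison principle on a single $\rmC^2$ domain $U$ with $K\subset\subset U\subset\subset\Omega$), and invoke Theorem \ref{thm:rate_doubling0}(ii) together with the lower bound $u^\varepsilon\geq 0=u$ on $K$. Your use of uniform balls $B_{r_0}(x_0)$ and translation invariance is a clean way to make the constant's uniformity explicit, and your remark that this dispenses with the connectedness hypothesis on $K$ is a correct observation.
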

\begin{proof} We choose an open, bounded and connected set $U$ such that $\partial U$ is $\rmC^2$ and $K\subset\subset U \subset\subset \Omega$. Let $w^\varepsilon$ be the solution to \eqref{eq:PDEeps} with $\Omega$ replaced by $U$. Then by Theorem \ref{thm:rate_doubling0}, we have
\begin{equation*}
    0\leq w^\varepsilon(x)\leq C\left(\varepsilon^{\alpha+1} + \varepsilon^{\alpha+2}\right), \qquad x\in K,
\end{equation*}
where $C$ depends on $\mathrm{dist}(K, \partial U)$ and $U$. Recall that $u = 0$ outside the support of $f$. By the comparison principle in $U$, we see that $u^\varepsilon \leq w^\varepsilon$ and thus the conclusion follows.
\end{proof}

For the general result of nonnegative compactly supported data, we have the following theorem.

\begin{thm}[Nonnegative compactly supported data]\label{thm:rate_doubling1} Assume that $f$ is nonnegative and Lipschitz with compact support in $\Omega_{\kappa}$. Let $u^\varepsilon$ be the unique solution to \eqref{eq:PDEeps} and $u$ be the unique solution to \eqref{eq:PDE0}. Then there exists a constant $C$ independent of $\varepsilon\in(0,1)$ and $\kappa$ such that
\begin{align}
   &-C \sqrt{\varepsilon}\leq  u^\varepsilon(x) - u(x) \leq C\left(\sqrt{\varepsilon}+\left(\frac{\varepsilon}{\kappa}\right)^{\alpha+2}\right) + \frac{\nu C_\alpha  \varepsilon^{\alpha+1}}{d(x)^\alpha}, \quad\; \qquad p<2,\label{eq:cp1}\\
   &-C\sqrt{\varepsilon}\leq u^\varepsilon(x) - u(x) \leq C \left(\sqrt{\varepsilon}+ \left(\frac{\varepsilon}{\kappa}\right)^2\right) +  \nu \varepsilon \log\left(\frac{1}{d(x)}\right), \qquad p=2, \label{eq:cp2}
\end{align}
for any $x\in \Omega$. As a consequence, $|u^\varepsilon(x)-u(x)|\leq C\sqrt{\varepsilon}$ for all $x\in \Omega$ with $d(x)\geq \varepsilon$.
\end{thm}

We state the following lemma as a preparation.

\begin{lem} Let $0<\kappa < \delta_0$ and $U_\kappa = \big\{x\in \Omega: 0<\mathrm{dist}(x,\partial\Omega) < \kappa\big\} = \Omega\backslash \overline{\Omega}_\kappa$. There holds
\begin{equation*}
    \mathrm{dist}(x,\partial\Omega_\kappa) = \kappa - \mathrm{dist}(x,\partial\Omega) \qquad\text{for all}\;x\in U_\kappa.
\end{equation*}
As a consequence, $x\mapsto \mathrm{dist}(x,\partial U_\kappa) = \min\big\{\mathrm{dist}(x,\partial \Omega_k),\mathrm{dist}(x,\partial \Omega)\big\}$ is twice continuously differentiable for $x\in \Omega\backslash \overline{\Omega}_{\kappa/2}$. Hence, we can choose
\begin{equation}\label{e:delta_kappa}
    \delta_{0,U_\kappa} \geq \frac{\kappa}{4}
\end{equation}
where $\delta_{0,\Omega}$ is defined as in \eqref{def:delta_0}.
\end{lem}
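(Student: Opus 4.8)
The plan is to reduce all three assertions to the structural fact behind \eqref{def:delta_0}: the signed distance $\rho$ to $\partial\Omega$ (taken positive inside $\Omega$) is $\rmC^2$ with $|D\rho|\equiv 1$ on the two-sided collar $\{|\rho|<2\delta_0\}$, and since $\kappa<\delta_0$ this collar comfortably contains every region appearing below. In particular $\partial\Omega_\kappa=\{\rho=\kappa\}$ and $U_\kappa=\{0<\rho<\kappa\}$.

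For the identity $\mathrm{dist}(x,\partial\Omega_\kappa)=\kappa-\rho(x)$ on $U_\kappa$: ``$\geq$'' holds because $\mathrm{dist}(\cdot,\partial\Omega)$ is $1$-Lipschitz and equals $\kappa$ on $\partial\Omega_\kappa$, so $|z-x|\geq\kappa-\rho(x)$ for every $z\in\partial\Omega_\kappa$; ``$\leq$'' holds by projecting $x$ onto its unique (since $\rho(x)<\kappa<2\delta_0$) nearest point $y\in\partial\Omega$ and continuing along the inward normal ray a further distance $\kappa-\rho(x)$, which reaches $\partial\Omega_\kappa$ at distance exactly $\kappa-\rho(x)$ from $x$. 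Since $\partial U_\kappa=\partial\Omega\cup\partial\Omega_\kappa$, one always has $\mathrm{dist}(\cdot,\partial U_\kappa)=\min\{\mathrm{dist}(\cdot,\partial\Omega),\mathrm{dist}(\cdot,\partial\Omega_\kappa)\}$, which on $U_\kappa$ equals $\min\{\rho,\kappa-\rho\}$ by the identity just proved. For $x\in\Omega\setminus\overline{\Omega}_{\kappa/2}$, that is, $0<\rho(x)<\kappa/2$, this minimum is $\rho(x)$, which is $\rmC^2$; this is the second assertion.

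For $\delta_{0,U_\kappa}\geq\kappa/4$ it suffices, by \eqref{def:delta_0} applied to $U_\kappa$, to show that the signed distance to $\partial U_\kappa$ is $\rmC^2$ on the two-sided collar $\{\mathrm{dist}(\cdot,\partial U_\kappa)<\kappa/2\}$ (which contains the $\delta$-collar from \eqref{def:delta_0} for every $\delta<\kappa/2$). On the interior side the minimum formula shows this signed distance equals $\rho$ on $\{0<\rho<\kappa/2\}$ and $\kappa-\rho$ on $\{\kappa/2<\rho<\kappa\}$, both $\rmC^2$, the cut locus $\{\rho=\kappa/2\}$ sitting precisely at depth $\kappa/2$ and hence outside the $(\kappa/2)$-collar. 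On the exterior side, a point within $\kappa/2$ of $\partial U_\kappa$ is within $\kappa/2$ of exactly one of $\partial\Omega$, $\partial\Omega_\kappa$ (because $\mathrm{dist}(\partial\Omega,\partial\Omega_\kappa)=\kappa$), and there the signed distance to $\partial U_\kappa$ coincides with $\rho$ (near $\partial\Omega$) or with $\rho-\kappa$ (near $\partial\Omega_\kappa$), both $\rmC^2$ since such points satisfy $|\rho|<\tfrac32\kappa<2\delta_0$. Thus the signed distance to $\partial U_\kappa$ is $\rmC^2$ on the whole $(\kappa/2)$-collar, so $\delta_{0,U_\kappa}\geq\tfrac12\cdot\tfrac\kappa2=\tfrac\kappa4$.

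The only delicate point is this last decomposition of the $(\kappa/2)$-collar of $\partial U_\kappa$: one must verify it splits into a collar of $\partial\Omega$ and a collar of $\partial\Omega_\kappa$ that are disjoint and both contained in the smoothness zone $\{|\rho|<2\delta_0\}$, and that the signed distance to $\partial U_\kappa$ takes the correct sign and value ($\rho$, respectively $\rho-\kappa$) on \emph{both} sides of each component. The numerology $\tfrac\kappa2+\tfrac\kappa2=\kappa=\mathrm{dist}(\partial\Omega,\partial\Omega_\kappa)$ together with $\kappa<\delta_0$ is exactly what makes this go through.
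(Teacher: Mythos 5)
Your proof is correct and rests on the same fact the paper invokes -- the $\mathrm{C}^2$ regularity of the signed distance to a $\mathrm{C}^2$ boundary on a two-sided collar (Gilbarg--Trudinger, p.~355) -- but where the paper's proof is a one-line citation, you carry out the bookkeeping explicitly: the identity $\mathrm{dist}(x,\partial\Omega_\kappa)=\kappa-d(x)$ via the $1$-Lipschitz property (for ``$\geq$'') and the inward normal ray (for ``$\leq$''), the decomposition $\partial U_\kappa=\partial\Omega\cup\partial\Omega_\kappa$, and the piecewise $\mathrm{C}^2$ verification on the $(\kappa/2)$-collar of $\partial U_\kappa$, with the cut locus $\{\rho=\kappa/2\}$ safely excluded and the two exterior pieces confined to $\{|\rho|<2\delta_0\}$. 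The only slip is cosmetic: with the orientation you adopt near $\partial\Omega$ (signed distance positive inside $U_\kappa$), the signed distance to $\partial U_\kappa$ near $\partial\Omega_\kappa$ is $\kappa-\rho$ on \emph{both} sides, matching your interior formula, not $\rho-\kappa$; since a global sign has no effect on $\mathrm{C}^2$-ness, the conclusion is unchanged.
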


\begin{proof} By the definition of $\delta_0 = \delta_{0,\Omega}$, we have $d(x) = \mathrm{dist}(x,\partial\Omega)$ is twice continuously differentiable in the region $U_{\delta_0} = \Omega\backslash \overline{\Omega}_{\delta_0}$. The proof follows from \cite[p. 355]{gilbarg_elliptic_2001}.
\end{proof}


\begin{proof}[Proof of Theorem \ref{thm:rate_doubling1}] Without loss of generality, assume that $f$ is supported in $\Omega_\kappa$ where $0<\kappa < \delta_{0}$.
Let $g_\kappa = u^\varepsilon$ on $\partial\Omega_{\kappa}$. Then the solution $u^\varepsilon$ of \eqref{eq:PDEeps} also solves
\begin{equation*}
    \left\{
  \begin{aligned}
  u^\varepsilon(x) + |Du^\varepsilon(x)|^p-\varepsilon \Delta u^\varepsilon(x) &=0 \;\qquad \text{in } U_\kappa ,\\
  u^\varepsilon(x) &= +\infty \quad \text{on } \partial \Omega,\\
  u^\varepsilon(x) &= g_\kappa \;\;\quad \text{on } \partial \Omega_{\kappa},
    \end{aligned}
\right.
\end{equation*}
in the annulus $U_\kappa= \Omega \setminus \overline{\Omega}_{\kappa} = \{x\in \Omega: 0< d(x) < \kappa\}$. Let $\tilde{u}^\varepsilon\in \mathrm{C}^2(U_\kappa)$ be the solution to the following problem
\begin{equation*}
    \left\{
        \begin{aligned}
            \tilde{u}^\varepsilon(x) + |D\tilde{u}^\varepsilon(x)|^p-\varepsilon \Delta \tilde{u}^\varepsilon(x) &=0 \;\qquad \text{in } U_\kappa ,\\
            \tilde{u}^\varepsilon(x) &= +\infty \quad \text{on } \partial U_\kappa = \partial \Omega\cup \partial \Omega_{\kappa},
        \end{aligned}
    \right.
\end{equation*}
whose existence is guaranteed by Theorem \ref{thm:wellposed1<p<2}.
Here the boundary condition is understood in the sense that $\tilde{u}^\varepsilon(x)\to \infty$ as $d_\kappa(x)\to 0$, where $d_\kappa(\cdot)$ is the distance function from the boundary of the annulus $U_\kappa$, i.e.,
\begin{equation*}
    d_\kappa(x) = \min \big\lbrace \mathrm{dist}(x,\partial \Omega_\kappa),\mathrm{dist}(x,\partial\Omega)  \big\rbrace \leq d(x) \qquad\text{for}\;x\in U_\kappa.
\end{equation*}
Since $f = 0$ in $\overline{U}_\kappa$, by Lemma \ref{lem:f=0}, $u=0$ in $\overline{U}_\kappa$. Hence, $u$ is also the unique state-constraint solution to
\begin{equation*}
    \left\{
        \begin{aligned}
            u(x)+ |Du(x)|^p &=0 \quad \text{in } U_\kappa ,\\
            u(x)+ |Du(x)|^p &\geq 0 \quad \text{on } \partial U_\kappa = \partial \Omega \cup\partial \Omega_{\kappa}.
        \end{aligned}
    \right.
\end{equation*}
The vanishing viscosity of $\tilde{u}^\varepsilon \to 0$ in $U_\kappa$ can be quantified by Theorem \ref{thm:rate_doubling0}, which gives us
\begin{equation*}
\begin{split}
    &0\leq \tilde{u}^\varepsilon(x) \leq \frac{\nu C_\alpha \varepsilon^{\alpha+1}}{d_\kappa(x)^\alpha}+C_3\left(\frac{\varepsilon}{\delta_{0,U_\kappa}}\right)^{\alpha+2}\qquad\quad\;\text{for}\;p<2,\\
    &0\leq \tilde{u}^\varepsilon(x) \leq \nu \varepsilon \log\left(\frac{1}{d_\kappa(x)}\right)+C\left(\frac{\varepsilon}{\delta_{0,U_\kappa}}\right)^{2}\qquad\text{for}\;p=2,
\end{split}
\end{equation*}
for $x\in U_\kappa$. From \eqref{e:delta_kappa} and the comparison principle in $U_\kappa$, we have
\begin{align}
    &0\leq u^\varepsilon(x) \leq \tilde{u}^\varepsilon(x)  \leq \frac{\nu C_\alpha\varepsilon^{\alpha+1}}{d_\kappa(x)^{\alpha}} + C_3\left(\frac{4\varepsilon}{\kappa}\right)^{\alpha+2} \qquad\quad\;\text{for}\;p<2, \qquad \label{annulus2}\\
    &0\leq u^\varepsilon(x)\leq \tilde{u}^\varepsilon(x) \leq \nu \varepsilon \log\left(\frac{1}{d_\kappa(x)}\right)+C\left(\frac{4\varepsilon}{\kappa}\right)^{2}\qquad\text{for}\;p=2,\label{annulus2p=2}
\end{align}
for $x\in U_\kappa$. We proceed with the doubling variable method. For $p<2$, consider the auxiliary functional
\begin{equation*}
    \Phi(x,y)= u^\varepsilon(x) - u(y) -\frac{C_0|x-y|^2}{\sigma} - \frac{\nu C_\alpha \varepsilon^{\alpha +1}}{d(x)^\alpha}, \qquad (x,y)\in \overline{\Omega}\times \overline{\Omega},
\end{equation*}
where $C_0$ is the Lipschitz constant of $u$ from \eqref{e:C0}, $\sigma\in (0,1)$. The fact that $\displaystyle d(x)^\alpha u^\varepsilon(x) \to C_\alpha \varepsilon^{\alpha+1}$ as $d(x) \to 0^+$ implies
\begin{equation*}
    \max_{(x,y) \in \overline{\Omega} \times \overline{\Omega}} \Phi(x,y) = \Phi(x_\sigma, y_\sigma) \qquad\text{for some}\;(x_\sigma,y_\sigma) \in \Omega \times \overline{\Omega}.
\end{equation*}
From $\Phi(x_\sigma, y_\sigma) \geq \Phi(x_\sigma, x_\sigma)$, we can deduce that
\begin{equation}\label{e:sigma}
    \left| x_\sigma - y_\sigma \right| \leq \sigma.
\end{equation}
If $ d(x_\sigma) \geq \frac{1}{2}\kappa$, since $x\mapsto \Phi(x,y_\sigma)$ has a maximum over $\Omega$ at $x=x_\sigma$, the subsolution test for $u^\varepsilon(x)$ gives us
\begin{align}\label{e:subsln}
    &u^\varepsilon(x_\sigma) + \left|\frac{2C_0(x_\sigma - y_\sigma)}{\sigma} -  \frac{\nu C_\alpha\alpha \varepsilon^{\alpha+1} D d(x_\sigma)}{d(x_\sigma)^{\alpha+1}}\right|^p - f(x_\sigma)\nonumber\\
    &\qquad -\varepsilon\left(\frac{2nC_0}{\sigma}+ \frac{\nu C_\alpha\alpha(\alpha+1) \varepsilon^{\alpha+1}|D d(x_\sigma)|^2}{d(x_\sigma)^{\alpha+2}} - \frac{\nu C_\alpha\alpha \varepsilon^{\alpha+1}\Delta d(x_\sigma)}{d(x_\sigma)^{\alpha+1}}\right) \leq 0.
\end{align}
Since $y\mapsto \Phi(x_\sigma,y)$ has a maximum over $\overline{\Omega}$ at $y = y_\sigma$, the supersolution test for $u(y)$ gives us
\begin{align}\label{e:supersln}
    u(y_\sigma) + \left|\frac{2C_0(x_\sigma - y_\sigma)}{\sigma}\right|^p - f(y_\sigma) \geq 0.
\end{align}
For simplicity, define
\begin{equation*}
    \xi_\sigma := \frac{2C_0(x_\sigma - y_\sigma)}{\sigma} \qquad\text{and}\qquad \zeta_\sigma :=- \frac{\nu C_\alpha\alpha \varepsilon^{\alpha+1} D d(x_\sigma)}{d(x_\sigma)^{\alpha+1}}.
\end{equation*}
From \eqref{e:sigma} and $d(x_\sigma) \geq \frac{1}{2}\kappa$,
\begin{equation*}
    |\xi_\sigma|\leq 2C_0, \qquad\text{and}\qquad |\zeta_\sigma| \leq \nu K_1 C_\alpha\alpha \left(\frac{\varepsilon}{d(x_\sigma)}\right)^{\alpha+1} \leq \nu K_1 C_\alpha \alpha  \left(\frac{2\varepsilon}{\kappa}\right)^{\alpha+1}.
\end{equation*}
Using the inequality \eqref{e:ineq} with $\gamma = p > 1$, we deduce that
\begin{align}\label{e:estia}
    \Big||\xi_\sigma +\zeta_\sigma|^p - |\xi_\sigma|^p \Big| &\leq p\Big(|\xi_\sigma|+|\zeta_\sigma|\Big)^{p-1}|\zeta_\sigma|\nonumber\\
    &\leq p\left[2C_0+\nu K_1 C_\alpha\alpha \left(\frac{2\varepsilon}{\kappa}\right)^{\alpha+1}\right]^{p-1}\nu K_1 C_\alpha\alpha \left(\frac{2\varepsilon}{\kappa}\right)^{\alpha+1}.
\end{align}
Combine \eqref{e:estia} together with \eqref{e:subsln}, \eqref{e:supersln} and $|f(x_\sigma) - f(y_\sigma)|\leq C|x_\sigma - y_\sigma|\leq C\sigma$ to obtain
\begin{align*}
     u^\varepsilon(x_\sigma) - u(y_\sigma) \leq & p\left(2C_0+\nu K_1 C_\alpha \alpha\left( \frac{2\varepsilon}{\kappa}\right)^{\alpha+1}\right)^{p-1}\nu K_1 C_\alpha \alpha \left(\frac{2\varepsilon}{\kappa}\right)^{\alpha+1} + C\sigma\nonumber\\
    &+2nC_0\left(\frac{\varepsilon}{\sigma}\right) + \nu K_1^2C_\alpha \alpha(\alpha+1)\left(\frac{2\varepsilon}{\kappa}\right)^{\alpha+2} + \nu K_2 C_\alpha \alpha  \left(\frac{2\varepsilon}{\kappa}\right)^{\alpha+1}\varepsilon\nonumber\\
    \leq & C\left[\sigma + \frac{\varepsilon}{\sigma} + \left(1+\left(\frac{\varepsilon}{\kappa}\right)^{\alpha+1}\right)^{p-1}\left(\frac{\varepsilon}{\kappa}\right)^{\alpha+1} + \left(\frac{\varepsilon}{\kappa}\right)^{\alpha+2} \right].
\end{align*}
By the fact that $(1+x)^\gamma \leq 1+x^\gamma$ for $x\in [0,1]$ and $\gamma\in [0,1]$, we know
\begin{equation*}
    \left(1+\left(\frac{\varepsilon}{\kappa}\right)^{\alpha+1}\right)^{p-1} \leq 1+ \left(\frac{\varepsilon}{\kappa}\right),
\end{equation*}
as $0<p-1\leq 1$.
Therefore,
\begin{equation*}
     u^\varepsilon(x_\sigma) - u(y_\sigma) \leq C\left[\sigma + \frac{\varepsilon}{\sigma} + \left(\frac{\varepsilon}{\kappa}\right)^{\alpha+1} + \left(\frac{\varepsilon}{\kappa}\right)^{\alpha+2}\right],
\end{equation*}
where $C$ is independent of $\kappa$ and $\varepsilon$. Now choose $\sigma = \sqrt{\varepsilon}$ to get (with $\kappa$ fixed)
\begin{equation}\label{e:final1}
    \Phi(x_\sigma,y_\sigma) \leq u^\varepsilon(x_\sigma) - u(y_\sigma) \leq C\sqrt{\varepsilon}.
\end{equation}
If $d(x_\sigma) < \frac{1}{2}\kappa$, then $x_\sigma \in U_\kappa$ and furthermore $\mathrm{dist}(x_\sigma,\partial \Omega_\kappa) > \frac{1}{2}\kappa$. Indeed, for any $y\in\partial\Omega$ and $z\in \partial\Omega_k$, we have $|x_\sigma - z| + |x_\sigma - y| \geq |y-z|$. Taking the infimum over all $y\in \partial\Omega$, we deduce that
\begin{equation*}
    |x_\sigma - z| + d(x_\sigma) \geq \inf_{y\in \partial \Omega}|y-z| = d(z) = \kappa
\end{equation*}
since $z\in \partial\Omega_k = \{x \in \Omega: d(x) = \kappa\}$. Thus, $|x_\sigma - z| \geq \kappa - d(x_\sigma) > \frac{1}{2}\kappa$ for all $z\in \partial\Omega_k$, which implies that $\mathrm{dist}(x_\sigma,\partial\Omega_k)>\frac{1}{2}\kappa$ and hence $d_\kappa(x_\sigma) = d(x_\sigma)$. By \eqref{annulus2} and the fact that $u \geq 0$, we have
\begin{align}
    \Phi(x_\sigma, y_\sigma)\leq u^\varepsilon(x_\sigma) - \frac{\nu C_\alpha \varepsilon^{\alpha+1}}{d(x_\sigma)^\alpha} \leq C_3\left(\frac{4\varepsilon}{\kappa}\right)^{\alpha+2} \label{e:final2}.
\end{align}
\noindent
Since $\Phi(x,x) \leq \Phi(x_\sigma,y_\sigma)$ for all $x\in \Omega$, we obtain from \eqref{e:final1} and \eqref{e:final2} that
\begin{equation*}
    u^\varepsilon(x)-u(x)-\frac{\nu C_\alpha  \varepsilon^{\alpha+1}}{d(x)^\alpha} \leq C\sqrt{\varepsilon} +C_3\left(\frac{4\varepsilon}{\kappa}\right)^{\alpha+2}
\end{equation*}
and thus \eqref{eq:cp1} follows.

\noindent For $p=2$, we consider instead the functional
\begin{equation*}
    \Phi(x,y) = u^\varepsilon(x) - u(y) - \frac{C_0|x-y|^2}{\sigma} - \nu \varepsilon \mathrm{log}\left(\frac{1}{d(x)}\right), \qquad (x,y)\in \overline{\Omega}\times \overline{\Omega}.
\end{equation*}
Similar to the previous case where $1<p<2$, the maximum of $\Phi$ occurs at some point $(x_\sigma,y_\sigma)\in \Omega\times\overline{\Omega}$ and $|x_\sigma - y_\sigma|\leq \sigma$. If $d(x_\sigma)\geq \frac{1}{2}\kappa$, by the subsolution test for $u^\varepsilon(x)$, we have
\begin{align}\label{e:subslnp=2}
    u^\varepsilon(x_\sigma)&+ \left|\frac{2C_0(x_\sigma - y_\sigma)}{\sigma} - \nu \varepsilon \frac{D d(x_\sigma)}{d(x_\sigma)}\right|^2  -f(x_\sigma) \nonumber\\
    &- 2nC_0\left(\frac{\varepsilon}{\sigma}\right) - \nu |D d(x_\sigma)|^2 \left(\frac{\varepsilon}{d(x_\sigma)}\right)^2  + \nu \Delta d(x_\sigma)\left(\frac{\varepsilon^2}{d(x_\sigma)}\right) \leq 0.
\end{align}
By the supersolution test for $u(y)$, we have
\begin{equation}\label{e:superslnp=2}
    u(y_\sigma) + \left|\frac{2C_0(x_\sigma - y_\sigma)}{\sigma}\right|^2 - f(y_\sigma) \geq 0.
\end{equation}
Subtract \eqref{e:superslnp=2} from \eqref{e:subslnp=2} to get
\begin{align*}
    u^\varepsilon(x_\sigma) - u(y_\sigma) \leq & \left(4C_0+ \nu\varepsilon\frac{D d(x_\sigma)}{d(x_\sigma)}\right)\left(\nu \varepsilon \frac{ D d(x_\sigma)}{d(x_\sigma)}\right) \\
    &+ C\sigma + 2nC_0 \left(\frac{\varepsilon}{\sigma}\right)+ \nu |D d(x_\sigma)|^2 \left(\frac{\varepsilon}{d(x_\sigma)}\right)^2 + \nu|\Delta d(x_\sigma)| \frac{\varepsilon^2}{d(x_\sigma)}.
\end{align*}
Using $d(x_\sigma)\geq \frac{1}{2}\kappa$ and bounds on $d(x)$ in \eqref{boundond}, we see that
\begin{align}\label{p=2a}
    \Phi(x_\sigma,y_\sigma)&\leq u^\varepsilon(x_\sigma) - u(y_\sigma)\nonumber \\
    &\leq 4K_1^2\nu(1+\nu)\left(\frac{\varepsilon}{\kappa}\right)^2 + C\sigma + 2nC_0\left( \frac{\varepsilon}{\sigma}\right) + 2\nu(K_2\varepsilon+4C_0K_1)\left(\frac{\varepsilon}{\kappa}\right)\nonumber\\
    &\leq C\left(\sigma+\frac{\varepsilon}{\sigma} + \frac{\varepsilon}{\kappa} + \left(\frac{\varepsilon}{\kappa}\right)^2\right) \leq C\sqrt{\varepsilon}
\end{align}
if we choose $\sigma = \sqrt{\varepsilon}$.

\noindent
If $d(x_\sigma)<\frac{1}{2}\kappa$, then $x_\sigma\in U_\kappa$. Again, we have $d_\kappa(x_\sigma) = d(x_\sigma)$ and from \eqref{annulus2p=2}
\begin{equation}\label{p=2b}
    \Phi(x_\sigma,y_\sigma) \leq u^\varepsilon(x_\sigma) - \nu \varepsilon \log\left(\frac{1}{d(x_\sigma)}\right) \leq  C\left(\frac{4\varepsilon}{\kappa}\right)^2. 
\end{equation}
Since $\Phi(x,x)\leq \Phi(x_\sigma,y_\sigma)$ for $x\in \Omega$, we obtain from \eqref{p=2a} and \eqref{p=2b} that
\begin{equation*}
    u^\varepsilon(x) - u(x) - \nu\varepsilon\log\left(\frac{1}{d(x)}\right) \leq C\sqrt{\varepsilon} +C\left(\frac{4\varepsilon}{\kappa}\right)^2
\end{equation*}
and thus \eqref{eq:cp2} follows.
\end{proof}



\begin{rem} For general nonnegative Lipschitz data $f\in \mathrm{C}(\overline{\Omega})$, it is natural to try a cutoff function argument. Let $\chi_{{\kappa}}\in \mathrm{C}_c^\infty(\Omega)$ such that $0\leq \chi_{\kappa}\leq 1$, $\chi_\kappa = 1$ in $\Omega_{2\kappa}$ and $\mathrm{supp}\;\chi_\kappa\subset\Omega_\kappa$. Let $u^\varepsilon_\kappa\in \rmC^2(\Omega)\cap\rmC(\overline{\Omega})$ solve \eqref{eq:PDEeps} with data $f\chi_{\kappa}$. Then $u^\varepsilon_\kappa\to u^\varepsilon$ as $\kappa\to 0$ (since $f\chi_\kappa\to f$ in the weak$^*$ topology of $L^\infty(\Omega)$ and we have the continuity of the solution to \eqref{eq:PDEeps} with respect to data in this topology \cite[Remark II.1]{Lasry1989}). However, it is not clear at the moment how to quantify this rate of convergence, since $f\chi_\kappa$ does not converge to $f$ in the uniform norm, unless $f = 0$ on $\partial\Omega$.
\end{rem}

\subsection{A rate for nonnegative zero boundary data}

We prove the rate of convergence for the case where $f$ is nonnegative with $f=0$ on $\partial \Omega$.

\begin{proof}[Proof of Theorem \ref{main_thm1}]
Let $L = \Vert D f\Vert_{L^\infty(\Omega)}$ be the Lipschitz constant of $f$. For $\kappa>0$ small such that $0<\kappa<\delta_0$ and $x\in \Omega\backslash \Omega_\kappa$, let $x_0$ be the projection of $x$ onto $\partial\Omega$. We observe that
\begin{equation}\label{e:bound_aa}
    f(x) = f(x) - f(x_0) \leq L|x-x_0| = L\kappa.
\end{equation}
Define
\begin{equation*}
    g_\kappa(x) =
    \begin{cases}
        0           &\qquad\text{if}\;0\leq d(x) \leq \kappa/2,\\
        2L\left(d(x)-\kappa/2\right)       &\qquad\text{if}\;\kappa/2\leq d(x) \leq \kappa.
    \end{cases}
\end{equation*}

\noindent
It is clear that for $x\in \partial\Omega_\kappa$, $g_\kappa(x) = L\kappa \geq f(x)$ since \eqref{e:bound_aa}. Therefore, we can define the following continuous function
\begin{equation}\label{def_f_kappa}
    f_\kappa(x) =
    \begin{cases}
        0             &\qquad\text{if}\;0\leq d(x) \leq \kappa/2,\\
        \min \left\lbrace g_\kappa(x), f(x) \right\rbrace &\qquad\text{if}\;\kappa/2\leq d(x) \leq \kappa,\\
        f(x) &\qquad\text{if}\;\kappa \leq d(x).
    \end{cases}
\end{equation}
\noindent
A graph of $f_\kappa$ is given in Figure \ref{fig:f_kappa}.
\begin{figure}[h]
    \centering
    \includegraphics[scale=0.35]{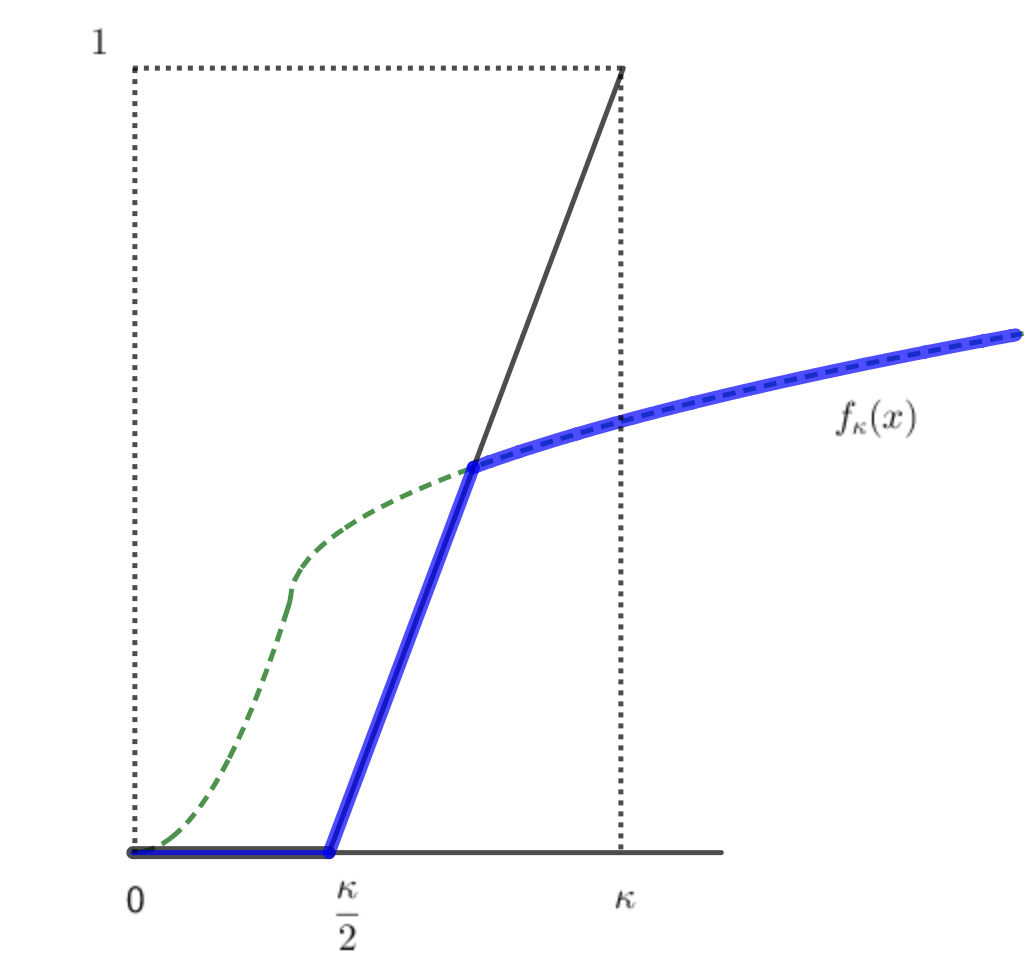}
    \caption{Graph of the function $f_\kappa$.}
    \label{fig:f_kappa}
\end{figure}
\FloatBarrier

\noindent
The continuity at $x\in \partial\Omega_\kappa$ comes from the fact that when $d(x) =\kappa$, we have $g_k(x) = L\kappa \geq f(x)$ by \eqref{e:bound_aa}. It is clear that $f_\kappa$ is Lipschitz with $\Vert f_\kappa\Vert_{L^\infty(\Omega)}\leq L$ as well and $f_\kappa\to f$ uniformly as $\kappa\to 0$. Indeed, we have $0\leq f_\kappa \leq f$ and
\begin{equation*}
    0\leq \max_{x\in \overline{\Omega}} (f(x) - f_\kappa(x)) \leq \max_{x\in \overline{\Omega}\backslash \overline{\Omega}_\kappa} (f(x) - f_\kappa(x)) = \max_{x\in \overline{\Omega}\backslash \overline{\Omega}_\kappa} f(x) \leq L\kappa.
\end{equation*}
Let $u^\varepsilon_\kappa\in \rmC^2(\Omega)\cap\rmC(\overline{\Omega})$ be the solution to \eqref{eq:PDEeps} with data $f\chi_{\kappa}$ and $u_k\in \mathrm{C}(\overline{\Omega})$ be the corresponding solution to \eqref{eq:PDE0} with data $f{\chi_\kappa}$. By the comparison principle (\cite[Corollary II.1]{Lasry1989}), we have
\begin{equation}\label{e:bound_2aa}
    0\leq u^\varepsilon(x) - u^\varepsilon_\kappa(x) \leq L\kappa \qquad\text{for}\;x\in \Omega.
\end{equation}
By the comparison principle for \eqref{eq:PDE0}, we also have
\begin{equation}\label{e:bound_2ab}
    0\leq u(x) - u_\kappa(x) \leq L\kappa \qquad\text{for}\;x\in \Omega.
\end{equation}
If $1<p<2$, by Theorem \ref{thm:rate_doubling1}, there exists a constant $C$ independent of $\kappa$ such that
\begin{equation}\label{e:bound_2ac}
    -C\sqrt{\varepsilon}\leq u^\varepsilon_\kappa(x) - u_\kappa(x)\leq C\left[\sqrt{\varepsilon} + \left(\frac{\varepsilon}{\kappa}\right)^{\alpha+2} + \frac{\varepsilon^{\alpha+1}}{d(x)^\alpha}\right], \qquad x\in \Omega.
\end{equation}
Combining \eqref{e:bound_2aa}, \eqref{e:bound_2ab} and \eqref{e:bound_2ac}, we obtain
\begin{equation*}
\begin{split}
   -C\sqrt{\varepsilon}\leq u^\varepsilon(x) - u(x) &= \Big(u^\varepsilon(x) - u^\varepsilon_\kappa(x)\Big) + \Big(u^\varepsilon_\kappa(x) - u_\kappa(x)\Big) + \Big(u_\kappa(x) - u(x)\Big) \\
    &\leq L\kappa + C\left[\sqrt{\varepsilon}  + \left(\frac{\varepsilon}{\kappa}\right)^{\alpha+2} + \frac{\varepsilon^{\alpha+1}}{d(x)^\alpha}\right], \qquad x\in \Omega.
\end{split}
\end{equation*}
Choose $\kappa = \sqrt{\varepsilon}$ and we deduce that
\begin{equation*}
    -C\sqrt{\varepsilon}\leq u^\varepsilon(x) - u(x) \leq C\sqrt{\varepsilon} + \frac{C\varepsilon^{\alpha+1}}{d(x)^\alpha}
\end{equation*}
for $x\in \Omega$. Thus, the conclusion follows. \\

\noindent If $p=2$, by Theorem \ref{thm:rate_doubling1}, there exists a constant $C$ independent of $\kappa$ such that
\begin{equation}\label{e:bound_2acp=2}
    -C\sqrt{\varepsilon}\leq u^\varepsilon_\kappa(x) - u_\kappa(x) \leq C\left[\sqrt{\varepsilon} + \left(\frac{\varepsilon}{\kappa}\right)^2 + \varepsilon\log\left(\frac{1}{d(x)}\right)\right], \qquad x\in \Omega.
\end{equation}
Combining \eqref{e:bound_2aa}, \eqref{e:bound_2ab} and \eqref{e:bound_2acp=2}, we obtain
\begin{equation*}
\begin{split}
   -C\sqrt{\varepsilon}\leq u^\varepsilon(x) - u(x) &= \Big(u^\varepsilon(x) - u^\varepsilon_\kappa(x)\Big) + \Big(u^\varepsilon_\kappa(x) - u_\kappa(x)\Big) + \Big(u_\kappa(x) - u(x)\Big) \\
    &\leq L\kappa + C\left[\sqrt{\varepsilon} +\;    \left(\frac{\varepsilon}{\kappa}\right)^{2} + \varepsilon\log\left(\frac{1}{d(x)}\right) \right], \qquad x\in \Omega.
\end{split}
\end{equation*}
Choose $\kappa = \varepsilon$ and we deduce that
\begin{equation*}
    -C\sqrt{\varepsilon}\leq u^\varepsilon(x) - u(x) \leq C\sqrt{\varepsilon} +\varepsilon\log\left(\frac{1}{d(x)}\right)
\end{equation*}
for $x\in \Omega$. Thus, the conclusion follows.
\end{proof}

\section{Improved one-sided rate of convergence}
In this section, we assume $f\in \rmC^2(\overline{\Omega})$ (or uniformly semiconcave in $\overline{\Omega}$) such that $f = 0$ on $\partial\Omega$ and $f\geq 0$. It is known that for the problem on $\R^n$, namely,
\begin{equation*}
    u(x) + |Du|^p - f(x) = 0 \qquad\text{in}\;\R^n,
\end{equation*}
if $f$ is semiconcave in the whole space $\R^n$, then the solution $u$ is also semiconcave (Theorem \ref{convex}, see also \cite{Calder2021}).

\begin{rem}\label{rem:heuristic} The heuristic idea that we will use in this section is the following. Assume that $u^\varepsilon(x) - u(x)$ has a maximum over $\overline{\Omega}$ at some interior point $x_0\in \Omega$. Then by the equation \eqref{eq:PDEeps} at $x_0$ and the supersolution test for \eqref{eq:PDE0} at $x_0$, we obtain
\begin{equation*}
    \max_{x\in \overline{\Omega}}\Big( u^\varepsilon(x) - u(x)\Big) \leq u^\varepsilon(x_0) - u(x_0) \leq \varepsilon \Delta u^\varepsilon(x_0).
\end{equation*}
If $u$ is uniformly semiconcave in $\overline{\Omega}$, then $\Delta u^\varepsilon(x_0)\leq \Delta u(x_0) \leq C$. Thus, we obtain a better one-sided rate $\mathcal{O}(\varepsilon)$ for $u^\varepsilon-u$. However, there are a couple of problems with this argument. Firstly, as $u^\varepsilon = +\infty$ on $\partial\Omega$, we need to subtract an appropriate term from $u^\varepsilon$ to make a maximum over $\overline{\Omega}$ happen in the interior. Secondly, unless $f\in \mathrm{C}^2_c(\Omega)$, in general, $u$ is not uniformly semiconcave but only \emph{locally semiconcave}. In this section, we provide estimates on the local semiconcavity constant of $u$ and rigorously show how the upper bound of $u^\varepsilon-u$ can be obtained.
\end{rem}

From Lemma \ref{lem:f=0}, we have $u = 0$ on $\partial\Omega$. It is clear that the solution $u$ to \eqref{eq:PDE0} is also the unique solution to the following Dirichlet boundary problem
\begin{equation}\label{eq:Dirichlet}
    \begin{cases}
        u(x) + |Du(x)|^p = f(x) &\qquad\text{in}\;\Omega,\\
        \quad\quad \quad\quad\;\;\; u(x) = 0 &\qquad\text{on}\;\partial\Omega.
    \end{cases}
\end{equation}
Since $H(x,\xi) = |\xi|^p - f(x)$, the corresponding Legendre transform is
\begin{equation*}
    L(x,v) = C_p|v|^q + f(x)
\end{equation*}
where $p^{-1} + q^{-1} = 1$ and $C_p$ is defined in Lemma \ref{lem:f=0}. Let us extend $f$ to a function $\tilde{f}:\R^n\to \R$ by setting $\tilde{f}(x) = 0$ for $x\notin \Omega$.
\begin{defn} Define
\begin{equation*}
    C^k_0(\overline{\Omega}) = \Big\{\varphi\in \mathrm{C}^k(\overline{\Omega}):  D^\beta\varphi(x) = 0\;\text{on}\;\partial\Omega\;\text{for all}\;0\leq |\beta|\leq k\Big\}.
\end{equation*}
\end{defn}
We summarize the results about the semiconcavity of $u$ as follows.
\begin{thm}[Semiconcavity]\label{thm:newsemi} Assume $f \geq 0$, $f = 0$ on $\partial\Omega$ and $f$ is uniformly semiconcave in $\overline{\Omega}$ with semiconcavity constant $c$. Let $u$ be the solution to \eqref{eq:PDE0}.
\begin{itemize}
    \item[(i)] If $\tilde{f}$ is uniformly semiconcave in $\R^n$, then $u$ is uniformly semiconcave in $\overline{\Omega}$. 
    \item[(ii)] In general, $u$ is locally semiconcave. More specifically, there exists a constant $C > 0$ independent of $x \in \Omega$ such that $\forall x \in \Omega$,
\begin{equation}
    u(x+h)-2u(x)+u(x-h)\leq \frac{C}{d(x)}|h|^2,
\end{equation}
$\forall h \in \mathbb{R}^n$ with $|h| \leq M_x$ for some constant $M_x$ that depends on $x$.
\end{itemize}
\end{thm}
The proof of Theorem \ref{thm:newsemi} is given in Appendix.

\begin{rem} If $f\in \mathrm{C}_c^2(\R^n)$ (or $\mathrm{C}_0^2(\overline{\Omega})$), then $f$ is uniformly semiconcave with semiconcavity constant
\begin{equation}\label{def_c}
    c = \max \big\lbrace  D_{\xi\xi}f(x): |\xi|=1, x\in \R^n \big\rbrace\geq 0.
\end{equation}
Also, the condition that $\tilde{f}$ is semiconcave in $\R^n$ holds for $\mathrm{C}_c^2(\Omega)$ and $\mathrm{C}_0^2(\overline{\Omega})$. 
\end{rem}

The following lemma is a refined version of the local gradient bound in Theorem \ref{thm:grad_1}. We follow \cite[Theorem 3.1]{Armstrong2015a} where the authors use Bernstein's method inside a doubling variable argument and explicitly keep track of all the dependencies. We refer the reader to \cite{barles_weak_1991,capuzzo_dolcetta_holder_2010} and the references therein for related versions of the gradient bound. We believe this result is new in the literature since it is uniform in $\varepsilon$, namely, we give the explicit dependence of the gradient bound on $d(x)$. It also indicates that the boundary layer is a strip of size $\mathcal{O}(\varepsilon)$ from the boundary.

\begin{lem}\label{lem:boundDu^eps} For all $\varepsilon$ small enough, there exists a constant $C$ independent of $\varepsilon$ such that
\begin{equation}\label{eq:es_final}
|Du^\varepsilon(x)| \leq C\left( 1 + \left(\frac{\varepsilon}{d(x)}\right)^{\alpha+1}\right) \qquad\text{for}\;x\in \Omega.
\end{equation}
\end{lem}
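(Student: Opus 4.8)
The plan is to localise. By interior elliptic regularity $u^\varepsilon\in\rmC^2(\Omega)$, so it suffices to bound $|Du^\varepsilon(x_0)|$ for an arbitrary $x_0\in\Omega$; and if $d(x_0)\ge\delta_0$ this is immediate from Theorem~\ref{thm:grad_1} with $\delta=\delta_0$, because the right-hand side of \eqref{eq:es_final} is then $\ge C$. So assume $d(x_0)<\delta_0$, set $r:=\tfrac12 d(x_0)$ and $B:=B(x_0,r)\subset\subset\Omega$ (so $r\le d(x)\le 3r$ on $B$). First I would record a local gradient estimate that is \emph{uniform in $\varepsilon$}: following \cite[Theorem~3.1]{Armstrong2015a}, one differentiates the equation and carries out Bernstein's computation for $|Du^\varepsilon|^2$ inside a doubling-variable argument — which avoids assuming $u^\varepsilon\in\rmC^2$ a priori and, crucially, keeps every constant explicit — and, exploiting the good sign of the term $2\varepsilon|D^2u^\varepsilon|^2$, obtains
\[
 |Du^\varepsilon(x_0)|\ \le\ C\Big(1+\tfrac1r\,\mathrm{osc}_{B}\,u^\varepsilon\Big),
\]
with $C=C(n,p,\|f\|_{W^{1,\infty}(\Omega)})$ independent of $\varepsilon$ and of $x_0$. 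The viscous term is kept on the favourable side throughout rather than estimated away; this is the step that makes the final bound uniform in $\varepsilon$.

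The second and more delicate ingredient is an oscillation bound carrying \emph{no additive $O(1)$ constant}. Lemma~\ref{lem:super_refined} together with $u^\varepsilon\ge 0$ gives, on $B$, $0\le u^\varepsilon(x)\le \nu C_\alpha\varepsilon^{\alpha+1}d(x)^{-\alpha}+\max_{\overline\Omega}f+C\varepsilon^{\alpha+2}$ for $1<p<2$, but the bare constant $\max_{\overline\Omega}f$ would contribute a spurious term of size $C/d(x_0)$ after division by $r$, which the target bound does \emph{not} absorb in the intermediate regime $\varepsilon\ll d(x_0)\ll1$. To remove it I would refine the barrier computation of Lemma~\ref{lem:super_refined}: for a suitable $\delta_1\in(0,\delta_0)$ and constants $\nu>1>c_0>0$, $A,B>0$, the functions
\[
\underline w(x)=\frac{c_0C_\alpha\varepsilon^{\alpha+1}}{d(x)^\alpha}-A\,d(x)-C\varepsilon^{\alpha+2},\qquad
\overline w(x)=\frac{\nu C_\alpha\varepsilon^{\alpha+1}}{d(x)^\alpha}+B\,d(x)+C\varepsilon^{\alpha+2}
\]
are, respectively, a sub- and a supersolution of \eqref{eq:PDEeps} in $\{0<d<\delta_1\}$ that sandwich $u^\varepsilon$ there — for $\underline w$ via the maximal-subsolution characterisation in Theorem~\ref{thm:wellposed1<p<2}, and for $\overline w$ via the comparison principle together with $u^\varepsilon\le \nu C_\alpha\varepsilon^{\alpha+1}\delta_1^{-\alpha}+O(1)$ on $\{d=\delta_1\}$. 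The linear-in-$d$ corrections are precisely what makes the constant parts of $\overline w$ and $\underline w$ cancel in the oscillation. Evaluating on $B$ (where $d\in[r,3r]$) yields
\[
\mathrm{osc}_B\,u^\varepsilon\ \le\ \sup_B\overline w-\inf_B\underline w\ \le\ C\Big(\frac{\varepsilon^{\alpha+1}}{d(x_0)^{\alpha}}+d(x_0)\Big).
\]

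Combining the two ingredients gives $|Du^\varepsilon(x_0)|\le C\big(1+(\varepsilon/d(x_0))^{\alpha+1}+1\big)$, which is \eqref{eq:es_final} for $1<p<2$. The case $p=2$ is identical with $\alpha=0$ and the profile $\varepsilon\log(1/d)$ in place of $C_\alpha\varepsilon^{\alpha+1}d^{-\alpha}$: since $s\mapsto\log(1/s)$ has oscillation $\log 3$ over $[r,3r]$, the barriers $\nu\varepsilon\log(1/d)\pm(\text{linear in }d)$ give $\mathrm{osc}_B u^\varepsilon\le C(\varepsilon+d(x_0))$ and hence $|Du^\varepsilon(x_0)|\le C(1+\varepsilon/d(x_0))$.

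The main obstacle is exactly this interplay near the boundary layer. One needs the local estimate of the first step with a constant that does not degenerate as $\varepsilon\to0$ — which is why the doubling-variable version of Bernstein's method is used rather than the naive one — and one needs the oscillation bound of the second step to be free of any additive constant, which forces the construction of the refined lower barrier with the linear-in-$d$ correction. That computation, like the proof of Lemma~\ref{lem:super_refined}, hinges on the identities $C_\alpha^{\,p}\alpha^{\,p}=C_\alpha\alpha(\alpha+1)$ and $p(\alpha+1)=\alpha+2$ and on $|Dd|=1$ in $\Omega^{\delta_0}\backslash\Omega_{\delta_0}$.
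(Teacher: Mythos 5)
Your plan follows the paper's structure (localise, apply the uniform-in-$\varepsilon$ gradient bound of Armstrong--Tran, use Lemma~\ref{lem:super_refined} to control the size of $u^\varepsilon$), but there is a genuine error in the central ingredient: you misstate the form of the local gradient estimate, and this mistake is what drives you into the unnecessary and unverified ``refined lower barrier'' construction.

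\textbf{The gradient estimate is not of osc/$r$ type.} For the Hamiltonian $|\xi|^p$, the Bernstein computation produces a bound with a $1/p$ power, not a linear one. What the paper actually extracts from \cite[Theorem~3.1]{Armstrong2015a}, after the rescaling $v(x)=\tfrac1\delta u^\varepsilon(x_0+\delta x)$ with $\delta=\tfrac14 d(x_0)$, is
\begin{equation*}
\sup_{B(0,1)}|Dv|\ \le\ C\left[\left(\tfrac{\varepsilon}{\delta}\right)^{\tfrac{1}{p-1}}+\left(\|f\|_{L^\infty}+\delta\|v\|_{L^\infty(B(0,3/2))}\right)^{\tfrac1p}\right],
\end{equation*}
and $|Dv(0)|=|Du^\varepsilon(x_0)|$. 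The term $\delta\|v\|_\infty$ equals $\|u^\varepsilon\|_{L^\infty(B(x_0,3\delta/2))}$ and is bounded, via Lemma~\ref{lem:super_refined} together with $u^\varepsilon\ge 0$, by $C\left(1+\varepsilon^{\alpha+1}/\delta^\alpha\right)$. Because the whole quantity $\|f\|_\infty+\delta\|v\|_\infty$ enters raised to the power $1/p=\tfrac{\alpha+1}{\alpha+2}<1$, the additive constant $\max f$ contributes only a \emph{bounded} constant $\left(\max f\right)^{1/p}$, and one checks directly (using $\varepsilon^{\alpha+1}/\delta^\alpha=(\varepsilon/\delta)^{\alpha+1}\delta$ and $\delta\lesssim 1$) that $\left(1+\varepsilon^{\alpha+1}/\delta^\alpha\right)^{1/p}\le C\left(1+(\varepsilon/\delta)^{\alpha+1}\right)$. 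So the spurious $C/d(x_0)$ term you worry about never appears; your concern rests entirely on the (incorrect) belief that the local estimate reads $|Du^\varepsilon(x_0)|\le C(1+\tfrac1r\,\mathrm{osc}_B u^\varepsilon)$, which is the right shape for a linear uniformly elliptic equation but not for the coercive Hamiltonian $|\xi|^p$, where the Bernstein argument gives $|Du|\lesssim(\ldots)^{1/p}$.

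\textbf{Consequences.} Because the $1/p$ power tames the $O(1)$ constant, the paper needs only the \emph{one-sided} supersolution of Lemma~\ref{lem:super_refined} together with the trivial lower bound $u^\varepsilon\ge 0$; no oscillation bound free of additive constants is required, and no lower barrier with a linear-in-$d$ correction is needed. Your proposed $\underline w=c_0C_\alpha\varepsilon^{\alpha+1}d^{-\alpha}-A\,d-C\varepsilon^{\alpha+2}$ is, in addition, not verified to be a subsolution in the strip (the cross terms in $|D\underline w|^p$ and the subleading pieces of $\varepsilon\Delta\underline w$ must all be absorbed, and since $-A\,d$ vanishes at $\partial\Omega$ it cannot play the role of the constant $-M_\eta$ the paper uses in the Appendix), and the reference to the ``maximal-subsolution characterisation'' in Theorem~\ref{thm:wellposed1<p<2} does not directly apply to a function that is a subsolution only in the boundary strip. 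In short: your strategy is the same localisation as in the paper, but the key quantitative ingredient is stated in the wrong form, and the extra machinery you build to compensate is both unnecessary and incomplete.
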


\begin{proof}[Proof of Lemma \ref{lem:boundDu^eps}] 
Fix $x_0 \in \Omega\backslash \Omega_{\delta_0}$. Let $\delta := \frac{1}{4}d(x_0)$ and
\begin{equation*}
    v(x) := \frac{1}{\delta}u^\varepsilon(x_0+\delta x), \qquad x\in B(0,2).
\end{equation*}
Then $v$ solves
\begin{equation}\label{eq:eqn_of_v}
    \delta v(x) + |Dv(x)|^p - \tilde{f}(x) - \frac{\varepsilon}{\delta}\Delta v(x) = 0 \qquad\text{in}\;B(0,2),
\end{equation}
where $\tilde{f}(x): = f(x_0+\delta x)$ on $\overline{B(0,2)}$. Note that $\Vert \tilde{f}\Vert_{L^\infty}\leq \Vert f\Vert_{L^\infty}$ and
\begin{equation*}
    B(x_0,2\delta)\subset \Omega_{2\delta}\subset\subset\Omega.
\end{equation*}
By Lemma \ref{lem:super_refined}, there is a constant $C$ independent of $\delta,\varepsilon$ such that
\begin{equation*}
     \delta\Vert v\Vert_{L^\infty\left(B\left(0,\frac{3}{2}\right)\right)} \leq \Vert u^\varepsilon\Vert_{L^\infty(\Omega_{2\delta}))} \leq C\left(1 + \frac{\varepsilon^{\alpha+1}}{\delta^\alpha}\right).
\end{equation*}
Apply Theorem 3.1 in \cite{Armstrong2015a} to obtain
\begin{equation*}
\begin{split}
    \sup_{x\in B(0,1)}|Dv(x)| &\leq C\left[\left(\frac{\varepsilon}{\delta}\right)^{\frac{1}{p-1}} + \left(\Vert f\Vert_{L^\infty}+\delta \Vert v\Vert_{L^\infty\left(B\left(0,\frac{3}{2}\right)\right)}\right)^{\frac{1}{p}}\right] \\
    &\leq C\left[\left(\frac{\varepsilon}{\delta}\right)^{\alpha+1} + \left(1+\frac{\varepsilon^{\alpha+1}}{\delta^\alpha}\right)^{\frac{\alpha+1}{\alpha+2}}\right]\leq C \left(1+\left(\frac{\varepsilon}{\delta}\right)^{\alpha+1}\right),
\end{split}
\end{equation*}
where $p = \frac{\alpha+2}{\alpha+1}$ and $\alpha+1 = \frac{1}{p-1}$.
Plugging in $\delta = \frac{1}{4}d(x_0)$, we obtain
\begin{equation*}
    |Du^\varepsilon(x_0)| = |Dv(0)| \leq C\left(1+ \left(\frac{\varepsilon}{d(x_0)}\right)^{\alpha+1}\right).
\end{equation*}
In other words, we have \eqref{eq:es_final} for all $x\in \Omega\backslash\Omega_{\delta_0}$. On the other hand, from Theorem \ref{thm:grad_1}, there exists a constant $C$ independent of $\varepsilon$ such that $|Du^\varepsilon(x)|\leq C$ for all $x\in \Omega_{\delta_0}$. Thus, the proof is complete.
\end{proof}

\begin{proof}[Proof of Theorem \ref{thm:rate_doubling2}] For $1<p<2$, we proceed as in the proof of Theorem \ref{thm:rate_doubling1} to obtain
\begin{align}
    &0\leq u^\varepsilon(x) \leq \tilde{u}^\varepsilon(x)  \leq \frac{\nu C_\alpha\varepsilon^{\alpha+1}}{d_\kappa(x)^{\alpha}} + C_3\left(\frac{4\varepsilon}{\kappa}\right)^{\alpha+2} \label{annulus2a}
\end{align}
for $x\in U_\kappa$. Let
\begin{equation*}
    \psi^\varepsilon(x) := u^\varepsilon(x) - \frac{\nu C_\alpha \varepsilon^{\alpha+1}}{d(x)^\alpha}, \qquad x\in \Omega,
\end{equation*}
where $\nu > 1$ is chosen as in Lemma \ref{lem:super_refined}. It is clear that $u-\psi^\varepsilon$ has a local minimum at some point $x_0\in \Omega$ since $\psi^\varepsilon(x)\to -\infty$ as $x\to \partial\Omega$. The normal derivative test gives us
\begin{equation*}
    D\psi^\varepsilon(x_0) = Du^\varepsilon(x_0) + \nu C_\alpha\alpha \left(\frac{\varepsilon}{d(x_0)}\right)^{\alpha+1} D d(x_0) \in D^-u(x_0).
\end{equation*}
There are two cases to consider:
\begin{itemize}
    \item If $\displaystyle d(x_0)< \frac{1}{2}\kappa$, then as in the proof of Theorem \ref{thm:rate_doubling1}, $x_0\in U_\kappa$ and $d_\kappa(x_0) = d(x_0)$. By the definition of $x_0$, for any $x\in \Omega$, there holds
    \begin{align*}
        u(x) - \left(u^\varepsilon(x) - \frac{\nu C_\alpha \varepsilon^{\alpha+1}}{d(x)^\alpha}\right) \geq u(x_0) - \left(u^\varepsilon(x_0) - \frac{\nu C_\alpha \varepsilon^{\alpha+1}}{d(x_0)^\alpha}\right).
    \end{align*}
    Therefore,
    \begin{equation*}
    \begin{split}
        u^\varepsilon(x) - u(x) - \frac{\nu C_\alpha \varepsilon^{\alpha+1}}{d(x)^\alpha} &\leq  \left(u^\varepsilon(x_0) - \frac{\nu C_\alpha \varepsilon^{\alpha+1}}{d(x_0)^\alpha}\right) - u(x_0) \leq C_3 \left(\frac{4\varepsilon}{\kappa}\right)^{\alpha+2}
    \end{split}
    \end{equation*}
    thanks to \eqref{annulus2a}. Thus, in this case
    \begin{equation*}
        u^\varepsilon(x) - u(x) \leq \frac{\nu C_\alpha \varepsilon^{\alpha+1}}{d(x)^\alpha} + C_3 \left(\frac{4\varepsilon}{\kappa}\right)^{\alpha+2}, \qquad x\in \Omega.
    \end{equation*}
    \item If $\displaystyle d(x_0) \geq \frac{1}{2}\kappa$, from the fact that $u$ is semiconcave in $\Omega$ with a linear modulus $c(x)$ as in Theorem \ref{thm:newsemi}, we have
\begin{equation*}
    D^2\psi^\varepsilon(x_0) \prec c(x_0)\;\mathbb{I}_n,
\end{equation*}
which implies that
\begin{equation}\label{mybound}
    \Delta \psi^\varepsilon(x_0) \leq nc(x_0) \leq \frac{Cn}{d(x_0)} \leq \frac{Cn}{\kappa}.
\end{equation}

In other words, we have
\begin{equation*}
    \varepsilon\Delta u^\varepsilon (x_0) - \frac{\nu C_\alpha \alpha(\alpha+1)\varepsilon^{\alpha+2}}{d(x_0)^{\alpha+2}}| Dd(x_0)|^2 + \frac{\nu C_\alpha \alpha \varepsilon^{\alpha+2}}{d(x_0)^{\alpha+1}}\Delta d(x_0) \leq \frac{Cn\varepsilon}{\kappa}.
\end{equation*}
Since $d(x_0)\geq \frac{1}{2}\kappa$, we can further deduce that
\begin{equation}\label{delta_psi}
    \varepsilon\Delta u^\varepsilon(x_0) \leq \frac{Cn\varepsilon}{\kappa} + \frac{C\varepsilon^{\alpha+2}}{d(x_0)^{\alpha+2}} \leq \frac{Cn\varepsilon}{\kappa} +  C\left(\frac{\varepsilon}{\kappa}\right)^{\alpha+2},
\end{equation}
where $C$ is independent of $\varepsilon$. Since $\psi^\varepsilon\in \rmC^2(\Omega)$, the viscosity supersolution test for $u$ gives us
\begin{equation}\label{convex1}
    u(x_0) + \left|Du^\varepsilon(x_0) + \frac{\nu C_\alpha \alpha \varepsilon^{\alpha+1}}{d(x_0)^{\alpha+1}}D d(x_0)\right|^p - f(x_0) \geq 0.
\end{equation}
On the other hand, since $u^\varepsilon$ solves \eqref{eq:PDEeps}, we have
\begin{equation}\label{convex2}
    u^\varepsilon(x_0) + |Du^\varepsilon(x_0)|^p - f(x_0) - \varepsilon \Delta u^\varepsilon(x_0) = 0.
\end{equation}
Combine \eqref{convex1} and \eqref{convex2} to obtain that
\begin{align}\label{convex3}
    u^\varepsilon(x_0) - u(x_0) &\leq \left|Du^\varepsilon(x_0) + \frac{\nu C_\alpha \alpha \varepsilon^{\alpha+1}}{d(x_0)^{\alpha+1}}D d(x_0)\right|^p - |Du^\varepsilon(x_0)|^p + \varepsilon\Delta u^\varepsilon(x_0).
\end{align}
By Lemma \eqref{lem:boundDu^eps}, we can bound $Du^\varepsilon(x_0)$ as
\begin{equation}\label{eq:bound_Du^eps}
    |Du^\varepsilon(x_0)| \leq C + C \left(\frac{\varepsilon}{d(x_0)}\right)^{\alpha+1} \leq C +C\left(\frac{\varepsilon}{\kappa}\right)^{\alpha+1}
\end{equation}
since $d(x_0)\geq \frac{1}{2}\kappa$. We estimate the gradient terms on the right hand side of \eqref{convex3} using \eqref{eq:bound_Du^eps} as follows.
\begin{align}\label{convex4}
    &\left|Du^\varepsilon(x_0) + \frac{\nu C_\alpha \alpha \varepsilon^{\alpha+1}}{d(x_0)^{\alpha+1}}D d(x_0)\right|^p - |Du^\varepsilon(x_0)|^p\nonumber \\
    &\qquad \qquad \leq p\left(|Du^\varepsilon(x_0)| + \frac{\nu C_\alpha \alpha \varepsilon^{\alpha+1}}{d(x_0)^{\alpha+1}}\left|D d(x_0)\right|\right)^{p-1} \frac{\nu C_\alpha \alpha \varepsilon^{\alpha+1}}{d(x_0)^{\alpha+1}}|D d(x_0)|\nonumber\\
    &\qquad \qquad \leq p\left(C + C\left(\frac{\varepsilon}{\kappa}\right)^{\alpha+1}\right)^{p-1} C\left(\frac{\varepsilon}{\kappa}\right)^{\alpha+1}\leq C\left(\frac{\varepsilon}{\kappa}\right)^{\alpha+1}\left(1 + \left(\frac{\varepsilon}{\kappa}\right)\right),
\end{align}
where $C$ is a constant depending only on $\nu$, $\alpha$, and $d$. Plugging \eqref{delta_psi} and \eqref{convex4} in the right hand side of \eqref{convex3}, we get
\begin{equation*}
    u^\varepsilon(x_0) - u(x_0) \leq \frac{Cn\varepsilon}{\kappa} + C\left(\frac{\varepsilon}{\kappa}\right)^{\alpha+1}\left(1 + \left(\frac{\varepsilon}{\kappa}\right)\right).
\end{equation*}
Therefore,
\begin{equation*}
    u^\varepsilon(x) - u(x) \leq \frac{\nu C_\alpha \varepsilon^{\alpha+1}}{d(x)^\alpha} +C\left( \left(\frac{\varepsilon}{\kappa}\right)^{\alpha+1} + \left(\frac{\varepsilon}{\kappa}\right)^{\alpha+2}\right) + \frac{Cn\varepsilon}{\kappa}, \qquad x\in \Omega.
\end{equation*}
\end{itemize}

For $p = 2$, the argument is similar. We take $\psi^\varepsilon(x):=u^\varepsilon(x)-\nu \varepsilon \log \left(\frac{1}{d(x)}\right)$ instead and still $u-\psi^\varepsilon$ attains a local minimum at some point $x_0 \in \Omega$. Carrying out the similar computations as in the case of $1 < p < 2$, we have:
\begin{itemize}
    \item If $\displaystyle d(x_0) < \frac{1}{2} \kappa$, then
    \begin{equation*}
        u^\varepsilon(x)-u(x) \leq \nu \varepsilon \log \left( \frac{1}{d(x)} \right) + C\left( \frac{4\varepsilon}{\kappa}\right)^2, \quad x \in \Omega.
    \end{equation*}
    \item If $\displaystyle d(x_0) \geq \frac{1}{2} \kappa$, then
    \begin{equation*}
        u^\varepsilon(x)-u(x) \leq \nu \varepsilon \log\left(\frac{1}{d(x)}\right) + C\left(\left( \frac{\varepsilon}{\kappa} \right)+  \left( \frac{\varepsilon}{\kappa} \right)^2\right) + \frac{Cn\varepsilon}{\kappa}, \quad x \in \Omega.
    \end{equation*}
\end{itemize}
From these two cases, the conclusion for $p=2$ follows.
\end{proof}
\begin{rem}\label{rem:nice} If $f\in \mathrm{C}^2(\overline{\Omega})$ with $f = 0,Df = 0$ and $D^2f = 0$ on $\partial\Omega$, then \eqref{mybound} can be improved to $\Delta \psi^\varepsilon(x_0) \leq nc$ where $c$ is the semiconcavity constant of $f$, and thus the final estimate becomes
\begin{equation*}
    u^\varepsilon(x) - u(x) \leq \frac{\nu C_\alpha \varepsilon^{\alpha+1}}{d(x)^\alpha} +C\left( \left(\frac{\varepsilon}{\kappa}\right)^{\alpha+1} + \left(\frac{\varepsilon}{\kappa}\right)^{\alpha+2}\right) + nc\varepsilon, \qquad x\in \Omega.
\end{equation*}
\end{rem}

\begin{rem} \quad
\begin{itemize}
    \item We only need the local gradient bound in Theorem \ref{thm:grad_1} to obtain the local rate of convergence $\mathcal{O}(\varepsilon)$ in \eqref{convex4}. However, to make the dependence on $\kappa$ explicit, we need to bound $Du^\varepsilon(x_0)$ as in \eqref{convex4}.
    \item Another way to get \eqref{eq:bound_Du^eps} without using Lemma \ref{lem:boundDu^eps} (which is true for all $x\in \Omega$) is using the fact that $D\psi^\varepsilon(x_0)\in D^-u(x_0)$, which implies
    \begin{equation*}
        |D\psi^\varepsilon(x_0)| = \left|Du^\varepsilon(x_0) + \nu C_\alpha\alpha \left(\frac{\varepsilon}{d(x_0)}\right)^{\alpha+1} D d(x_0) \right| \leq C_0
    \end{equation*}
    since $u$ is Lipschitz with constant $C_0$.
\end{itemize}
\end{rem}

Before giving a proof for Corollary \ref{cor:key}, we need to modify the construction of the cutoff function in the proof of Theorem \ref{main_thm1}.
\begin{lem}\label{conca} Assume $f\in \mathrm{C}^2(\overline{\Omega})$ such that $f=0$ and $Df = 0$ on $\partial\Omega$. For all $\kappa>0$ small enough, there exists $f_\kappa\in \mathrm{C}^2_c(\Omega)$ such that
\begin{equation*}
    \Vert f_\kappa - f\Vert_{L^\infty(\Omega)}\leq C\kappa^2 \qquad\text{and}\qquad \Vert D^2f_\kappa\Vert_{L^\infty(\Omega)}\leq C
\end{equation*}
where $C$ is independent of $\kappa$.
\end{lem}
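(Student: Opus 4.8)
The plan is to obtain $f_\kappa$ by multiplying $f$ by a cutoff that vanishes on a collar of width $\sim\kappa$ around $\partial\Omega$, and to exploit the hypotheses $f=0$ and $Df=0$ on $\partial\Omega$ to keep the second derivatives under control. Fix once and for all a reference function $\phi\in\mathrm{C}^\infty([0,\infty);[0,1])$ with $\phi\equiv0$ on $[0,1]$, $\phi\equiv1$ on $[2,\infty)$ and $|\phi'|+|\phi''|\le C_0$. Recalling that $d\in\mathrm{C}^2(\mathbb{R}^n)$ with $d(x)=\mathrm{dist}(x,\partial\Omega)$ and $|Dd(x)|=1$ for $x\in\Omega\setminus\overline{\Omega}_{\delta_0}$, for $0<\kappa<\delta_0/2$ I would set
\begin{equation*}
    f_\kappa(x):=\phi\!\left(\frac{d(x)}{\kappa}\right)f(x),\qquad x\in\Omega.
\end{equation*}
Since $f\in\mathrm{C}^2(\overline{\Omega})$ and $d\in\mathrm{C}^2$, we get $f_\kappa\in\mathrm{C}^2(\Omega)$; and $f_\kappa\equiv0$ wherever $d(x)\le\kappa$, so $\mathrm{supp}\,f_\kappa\subset\overline{\Omega}_\kappa\subset\subset\Omega$ and hence $f_\kappa\in\mathrm{C}^2_c(\Omega)$.

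The one substantive ingredient is the rate at which $f$ and $Df$ vanish at the boundary. Write $M:=\Vert D^2 f\Vert_{L^\infty(\Omega)}$. Given $x\in\Omega$, pick a nearest boundary point $x_0\in\partial\Omega$, so that $|x-x_0|=d(x)$ and the segment $[x_0,x]$ stays in $\overline{\Omega}$ (otherwise it would meet $\partial\Omega$ at a point closer to $x$ than $x_0$); Taylor-expanding $f$ and $Df$ from $x_0$ and using $f(x_0)=0$, $Df(x_0)=0$ yields
\begin{equation*}
    |f(x)|\le\tfrac12 M\,d(x)^2\qquad\text{and}\qquad|Df(x)|\le M\,d(x),\qquad x\in\Omega.
\end{equation*}
In particular, on the annular strip $S_\kappa:=\{x\in\Omega:\kappa\le d(x)\le2\kappa\}$ — which is exactly where all derivatives of $x\mapsto\phi(d(x)/\kappa)$ are supported — one has $|Df|\lesssim\kappa$ and $|f|\lesssim\kappa^2$.

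With these in hand the two estimates follow. For the uniform bound, $f_\kappa-f=(\phi(d/\kappa)-1)f$ vanishes outside $\{d\le2\kappa\}$, where $|f_\kappa-f|\le|f|\le\tfrac12 M(2\kappa)^2\le C\kappa$ (using $\kappa<1$). For the Hessian, the product rule gives
\begin{equation*}
    D^2 f_\kappa=\phi\!\left(\tfrac{d}{\kappa}\right)D^2 f+\frac1\kappa\phi'\!\left(\tfrac{d}{\kappa}\right)\!\Big(Dd\otimes Df+Df\otimes Dd+f\,D^2 d\Big)+\frac1{\kappa^2}\phi''\!\left(\tfrac{d}{\kappa}\right)f\,Dd\otimes Dd .
\end{equation*}
The first term is bounded by $M$; the remaining three are supported in $S_\kappa$, where $\tfrac1\kappa\phi'(d/\kappa)Df$, $\tfrac1\kappa\phi'(d/\kappa)f\,D^2 d$ and $\tfrac1{\kappa^2}\phi''(d/\kappa)f$ are each bounded, uniformly in $\kappa$, by a constant depending only on $C_0$, $M$ and $\Vert d\Vert_{\mathrm{C}^2(\overline{\Omega})}$. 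Hence $\Vert D^2 f_\kappa\Vert_{L^\infty(\Omega)}\le C$ with $C$ independent of $\kappa$, as claimed.

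The computations above are routine; the only point that has to work is the exact matching of powers, namely $\kappa^{-2}|f|\lesssim\kappa^{-2}d^2\lesssim1$ and $\kappa^{-1}|Df|\lesssim\kappa^{-1}d\lesssim1$ on $S_\kappa$. This is precisely what forces us to assume \emph{both} $f=0$ and $Df=0$ on $\partial\Omega$: with only $f=0$ one would get $|f|\lesssim d$, and then $\Vert D^2 f_\kappa\Vert_{L^\infty(\Omega)}$ would blow up like $\kappa^{-1}$. A minor secondary technicality is to keep the collar $\{d\le2\kappa\}$ inside the zone where $d$ is $\mathrm{C}^2$ (so that $\phi(d/\kappa)f\in\mathrm{C}^2$) and where a nearest boundary point is available, which is the reason for taking $\kappa<\delta_0/2$.
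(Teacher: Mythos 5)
Your proposal is correct and follows essentially the same approach as the paper: define $f_\kappa = f\cdot\phi(d/\kappa)$ with a fixed smooth cutoff $\phi$, use Taylor expansion from the nearest boundary point together with $f=0$ and $Df=0$ on $\partial\Omega$ to obtain $|f|\lesssim d^2$ and $|Df|\lesssim d$ near the boundary, and then check via the product rule that the Hessian of $f_\kappa$ stays uniformly bounded because the $\kappa^{-1}$ and $\kappa^{-2}$ factors are exactly compensated by the quadratic and linear vanishing of $f$ and $Df$ on the collar $\{\kappa\le d\le 2\kappa\}$.
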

\begin{proof} Choose a smooth function $\chi \in \mathrm{C}^\infty(\R)$ such that $\chi\geq 0$, $\chi = 0$ if $x\leq 1$, $\chi = 1$ if $x\geq 2$ and $|\chi'|\leq 2$ in $\R$. The graph of $\chi$ is shown as in Figure \ref{fig:chi}.

\begin{figure}[ht]
    \centering
    \includegraphics[scale = 0.45]{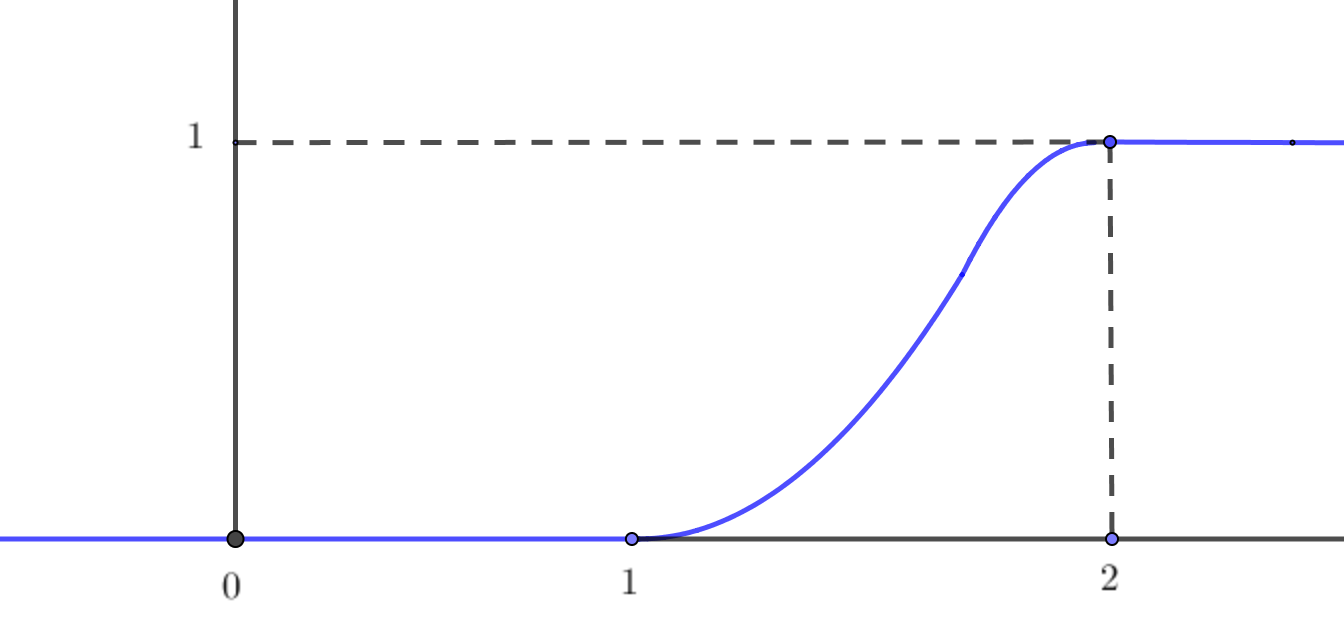}
    \caption{The graph of $\chi(x)$.}
    \label{fig:chi}
\end{figure}
\FloatBarrier
For $\kappa>0$ such that $0<2\kappa<\delta_0$ and $x\in \Omega\backslash \Omega_{2\kappa}$, let $x_0$ be the projection of $x$ onto $\partial\Omega$ and denote by $\nu(x_0)$ the outward unit normal vector at $x_0$. Write $x = x_0 - d(x)\nu(x_0)$ where $d(x)\leq 2\kappa$. We have
\begin{equation*}
    f(x) = f(x_0)-Df(x_0)\cdot \nu(x_0) d(x) + \int_0^{d(x)}(d(x)-s)\nu(x_0)\cdot D^2f(x_0-s\nu(x_0))\cdot \nu(x_0)ds.
\end{equation*}
Since $f = 0$ and $Df = 0$ on $\partial\Omega$, we deduce that
\begin{equation}\label{bound_near_bdr}
    |f(x)| \leq  \left( \left\Vert\frac{1}{2} D^2f \right\Vert_{L^\infty(\overline{\Omega})}\right)d(x)^2 \leq C\kappa^2 \qquad\text{and}\qquad
    |Df(x)|\leq C\kappa
\end{equation}
for all $d(x)\leq 2\kappa$. 
Define
\begin{equation*}
    f_\kappa(x) = f(x)\chi\left(\frac{d(x)}{\kappa}\right) \qquad\text{for}\;x\in \overline{\Omega}.
\end{equation*}
It is clear that $0\leq f_\kappa(x)\leq f(x)$ for all $x\in \overline{\Omega}$ and $f_\kappa(x) = f(x)$ if $d(x)\geq 2\kappa$. Furthermore, we observe that
\begin{equation*}
    0\leq \max_{x\in \overline{\Omega}} \big(f(x) - f_\kappa(x)\big) \leq \max_{0\leq d(x) \leq 2\kappa} \big(f(x) - f_\kappa(x)\big)\leq  \max_{0\leq d(x) \leq 2\kappa} f(x) \leq  C\kappa^2.
\end{equation*}
We have
\begin{equation*}
    Df_\kappa(x) = Df(x)\chi\left(\frac{d(x)}{\kappa}\right)+ f(x)\chi'\left(\frac{d(x)}{\kappa}\right)\frac{Dd(x)}{\kappa}
\end{equation*}
and
\begin{equation*}
\begin{split}
    D^2f_\kappa(x) = D^2f(x)\chi\left(\frac{d(x)}{\kappa}\right) &+ 2\chi'\left(\frac{d(x)}{\kappa}\right)\frac{Df(x)\otimes Dd(x)}{\kappa} \\
    &+ f(x)\left(\chi''\left(\frac{d(x)}{\kappa}\right)\frac{Dd(x)\otimes Dd(x)}{\kappa^2} + \chi'\left(\frac{d(x)}{\kappa}\right)\frac{D^2d(x)}{\kappa}\right)
\end{split}
\end{equation*}
is uniformly bounded thanks to \eqref{bound_near_bdr}.
\end{proof}

\begin{proof}[Proof of Corollary \ref{cor:key}] Let $u^\varepsilon_\kappa\in \rmC^2(\Omega)\cap \rmC(\overline{\Omega})$ be the solution to \eqref{eq:PDEeps} and $u_k$ be the solution to \eqref{eq:PDE0} with $f$ replaced by $f_k$, respectively. It is clear that
\begin{equation*}
  0\leq u^\varepsilon(x) - u^\varepsilon_\kappa(x)\leq C\kappa ^2 \qquad\text{for}\;x\in \Omega
\end{equation*}
and
\begin{equation*}
    0\leq u(x) - u_\kappa(x)\leq C\kappa ^2  \qquad\text{for}\;x\in \Omega.
\end{equation*}
Therefore,
\begin{equation}\label{cool}
    u^\varepsilon(x)-u(x)\leq 2C\kappa^2 + \Big(u^\varepsilon_\kappa(x) - u_\kappa(x)\Big).
\end{equation}
By Theorem \ref{thm:rate_doubling2} and Remark \ref{rem:nice}, as $f_\kappa\in \mathrm{C}_c^2(\Omega)$ with a uniform bound on $D^2f_\kappa$, we have
\begin{equation*}
\begin{aligned}
    u_\kappa^\varepsilon(x) - u_\kappa(x) &\leq\frac{\nu C_\alpha \varepsilon^{\alpha+1}}{d(x)^\alpha} + C\left(\left(\frac{\varepsilon}{\kappa}\right)^{\alpha+1} +\left(\frac{\varepsilon}{\kappa}\right)^{\alpha+2}\right) + 4nC\varepsilon,  &\quad p <2, \\
    u_\kappa^\varepsilon(x) - u_\kappa(x) & \leq \nu \varepsilon \log\left( \frac{1}{d(x)}\right)+C \left(\left(\frac{\varepsilon}{\kappa}\right)+\left(\frac{\varepsilon}{\kappa}\right)^2\right)+ 4nC\varepsilon, &\quad p=2
\end{aligned}
\end{equation*}
for some constant $C$ independent of $\kappa$. Choose $\kappa = \varepsilon^{\gamma}$ with $\gamma \in (0,1)$. Then \eqref{cool} becomes
\begin{align*}
    &u^\varepsilon(x) - u(x) \leq C\varepsilon^{2\gamma} + C\varepsilon + \frac{C\varepsilon^{\alpha+1}}{d(x)^\alpha} + C\varepsilon^{(1-\gamma)(\alpha+1)}, &\qquad p < 2,\\
    &u^\varepsilon(x) - u(x) \leq C\varepsilon^{2\gamma} + C\varepsilon + C\varepsilon |\log d(x)| + C\varepsilon^{1-\gamma}, &\qquad p = 2.
\end{align*}
If \( p = 2 \), the optimal choice of \( \gamma \) is given by \( 2\gamma = 1 - \gamma \), i.e., \( \gamma = \frac{1}{3} \), which yields a rate of \( \mathcal{O}(\varepsilon^{2/3}) \), an improvement over the \( \mathcal{O}(\sqrt{\varepsilon}) \) estimate in Theorem \ref{main_thm1}. \medskip

\noindent
If $p< 2$, by setting $2\gamma = (1-\gamma)(\alpha+1)$, we can get the best value of $\gamma$, that is, $\gamma = \frac{\alpha+1}{\alpha+3}$,
and we obtain an improved estimate of \( \mathcal{O}\left(\varepsilon^{\frac{2(\alpha+1)}{\alpha+3}}\right) \), noting that \( \frac{2(\alpha+1)}{\alpha+3} = \frac{1}{p-1/2} > \frac{1}{p} > \frac{1}{2} \).
\end{proof}

\begin{rem} If we do not assume $Df = 0$ on $\partial\Omega$, then the best we can get from the above argument is
\begin{equation*}
    u^\varepsilon(x) - u(x) \leq C\varepsilon^{\gamma} + C\varepsilon^{1-\gamma} + \frac{C\varepsilon^{\alpha+1}}{d(x)^\alpha} + C\varepsilon^{(1-\gamma)(\alpha+1)}, \qquad p < 2
\end{equation*}
and we obtain the rate $\mathcal{O}(\varepsilon^{1/2})$ again.
\end{rem}


\setcounter{section}{0}
\renewcommand\thesection{\Alph{section}}
\section{Appendix}
\subsection{Estimates on solutions}
We present here a proof for the gradient bound of the solution to \eqref{eq:PDEeps} using Bernstein's method (see also \cite{Lasry1989,lions_quelques_1985}). Another proof using Bernstein's method inside a doubling variable argument is given in \cite{Armstrong2015a}.

\begin{proof}[Proof of Theorem \ref{thm:grad_1}] Let $\theta\in (0,1)$ be chosen later, $\varphi\in \mathrm{C}_c^\infty(\Omega)$, $0\leq \varphi\leq 1$, $\mathrm{supp}\;\varphi\subset \Omega$ and $\varphi = 1$ on $\Omega_\delta$ such that
\begin{equation}\label{e:ass_power}
    |\Delta \varphi(x)| \leq C\varphi^\theta \qquad\text{and}\qquad |D \varphi(x)|^2 \leq C\varphi^{1+\theta},\quad \forall x\in\Omega,
\end{equation}
where $C = C(\delta,\theta)$ is a constant depending on $\delta,\theta$.

Define $w(x) := |Du^\varepsilon(x)|^2$ for $x \in \Omega$. The equation for $w$ is given by
\begin{equation*}
    -\varepsilon \Delta w + 2 p|D u^\varepsilon|^{p-2}D u^\varepsilon \cdot D w + 2  w - 2 D f\cdot D u^\varepsilon + 2 \varepsilon |D^2u^\varepsilon|^2 = 0 \qquad\text{in}\;\Omega.
\end{equation*}

Then an equation for $(\varphi w)$ can be derived as follows.
\begin{align*}
    &-\varepsilon \Delta (\varphi w) + 2 p|D u^\varepsilon|^{p-2}D u^\varepsilon \cdot D (\varphi w) + 2  (\varphi w) + 2 \varepsilon \varphi|D^2u^\varepsilon|^2 + 2\varepsilon \frac{D \varphi}{\varphi}\cdot D (\varphi w) \\
     = &\varphi(D f\cdot D u^\varepsilon) + 2 p|D u^\varepsilon|^{p-2}(D u^\varepsilon\cdot D \varphi)w -\varepsilon w \Delta \varphi + 2\varepsilon \frac{|D \varphi|^2}{\varphi}w
    \qquad\text{in}\;\mathrm{supp}\;\varphi.
\end{align*}
Assume that $\varphi w$ achieves its maximum over $\overline{\Omega}$ at $x_0\in \Omega$. And we can further assume that $x_0\in \mathrm{supp}\;\varphi$, since otherwise the maximum of $\varphi w$ over $\overline{\Omega}$ is zero. By the classical maximum principle,
\begin{equation*}
    -\varepsilon \Delta(\varphi w)(x_0)\geq 0 \qquad\text{and}\qquad |D(\varphi w)(x_0)| = 0.
\end{equation*}
Use this in the equation of $\varphi w$ above to obtain
\begin{equation*}
    \varepsilon \varphi|D^2u^\varepsilon|^2 \leq  \varphi (Df\cdot Du^\varepsilon)+ 2 p|Du^\varepsilon|^{p-1} |D\varphi|w + \varepsilon w |\Delta\varphi|  + 2\varepsilon  w\frac{|D\varphi|^2}{\varphi},
\end{equation*}
 where all terms are evaluated at $x_0$. From \eqref{e:ass_power}, we have
\begin{equation}\label{e:est_for_D^2u}
    \varepsilon \varphi|D^2u^\varepsilon|^2 \leq  \varphi |Df|w^{\frac{1}{2}}+ 2 Cp w^{\frac{p-1}{2}+1} \varphi^{\frac{1+\theta}{2}} + C\varepsilon w \varphi^{\theta} + 2C\varepsilon  w\varphi^\theta.
\end{equation}
By Cauchy-Schwartz inequality, $n|D^2u^\varepsilon|^2\geq (\Delta u^\varepsilon)^2$. Thus, if $n\varepsilon < 1$, then
\begin{equation}\label{e:est_for_D^2u_2}
    \varepsilon |D^2u^\varepsilon|^2 \geq \frac{(\varepsilon \Delta u^\varepsilon)^2}{n\varepsilon} \geq (\varepsilon \Delta u^\varepsilon)^2 = \left(  u^\varepsilon + |Du^\varepsilon|^p - f\right)^2 \geq |Du^\varepsilon|^{2p} - 2C|Du^\varepsilon|^p \geq \frac{|Du^\varepsilon|^{2p}}{2} - 2C,
\end{equation}
where $C$ depends on $\max_{\overline{\Omega}}f$ only. Using \eqref{e:est_for_D^2u_2} in \eqref{e:est_for_D^2u}, we obtain that
\begin{equation*}
    \varphi\left(\frac{1}{2}w^p - 2C\right) \leq \varphi |Df|w^{\frac{1}{2}}+ 2 Cp w^{\frac{p-1}{2}+1} \varphi^{\frac{1+\theta}{2}} + 3C\varepsilon w \varphi^{\theta}.
\end{equation*}
Multiply both sides by $\varphi^{p-1}$ to deduce that
\begin{align*}
    (\varphi w)^p \leq 4C\varphi^{p-1} + 2\Vert Df\Vert_{L^\infty}\varphi^p w^{\frac{1}{2}} + 4Cp \varphi^{\frac{2p+\theta - 1}{2}}w^{\frac{p+1}{2}} + 6C\varepsilon \varphi^{p+\theta - 1}w.
\end{align*}
Choose $2p+\theta -1 \geq p+1$, i.e., $p+\theta\geq 2$. This is always possible with the requirement $\theta \in (0,1)$, as $1<p <\infty$. Then we get
\begin{equation}\label{e:est_for_D^2u_3}
    (\varphi w)^p \leq C\left(1+ (\varphi w)^\frac{1}{2} + (\varphi w)^\frac{p+1}{2} +(\varphi w)\right).
\end{equation}
As a polynomial in $z = (\varphi w)(x_0)$, this implies that $(\varphi w)(x_0)\leq C$ where $C$ depends on coefficients of the right hand side of \eqref{e:est_for_D^2u_3}, which gives our desired gradient bound since $w(x)=(\varphi w)(x) \leq (\varphi w)(x_0)$ for $x \in \overline{\Omega}_\delta\subset \mathrm{supp}\;\varphi$.
\end{proof}

\subsection{Well-posedness of \eqref{eq:PDEeps}}
\begin{proof} [Proof of Theorem \ref{thm:wellposed1<p<2}] If $p\in (1,2)$, we use the ansatz $ u(x) = C_\varepsilon d(x)^{-\alpha}$ to find a solution to \eqref{eq:PDEeps}. Plug the ansatz into \eqref{eq:PDEeps} and compute
\begin{equation*}
\begin{split}
    |Du (x)|^p &= \frac{(\alpha C_\varepsilon)^p }{d(x)^{p(\alpha+1)}}|D d(x)|^p,\\
    \varepsilon\Delta u(x) &= \frac{\varepsilon C_\varepsilon\alpha(\alpha+1)}{d(x)^{\alpha+2}}|D d(x)|^2 - \frac{\varepsilon C_\varepsilon\alpha}{d(x)^{\alpha+1}}\Delta d(x).
\end{split}
\end{equation*}
Since $|D d(x)| = 1$ for $x$ near $\partial\Omega$, as $x\to \partial \Omega$, the explosive terms of the highest order are
\begin{equation*}
         C_\varepsilon^p \alpha^p d^{-(\alpha+1)p}  -\varepsilon C_\varepsilon \alpha(\alpha+1)d^{-(\alpha+2)}.
\end{equation*}
Set the above to be zero to obtain that
\begin{equation}\label{e:relation}
    \displaystyle\alpha = \frac{2-p}{p-1} \qquad\text{and}\qquad C_\varepsilon = \left(\frac{1}{\alpha}(\alpha+1)^\frac{1}{p-1}\right) \varepsilon^{\frac{1}{p-1}} = \frac{1}{\alpha}(\alpha+1)^{\alpha+1}\varepsilon^{\alpha+1}.
\end{equation}
For $0<\delta < \frac{1}{2}\delta_0$ and $\eta$ small, define
\begin{equation*}
\begin{split}
    \overline{w}_{\eta,\delta}(x) &:= \frac{(C_\alpha+\eta)\varepsilon^{\alpha+1}}{(d(x)-\delta)^\alpha} + M_\eta, \qquad x\in \Omega_\delta,\\
    \underline{w}_{\eta,\delta}(x) &:= \frac{(C_\alpha-\eta)\varepsilon^{\alpha+1}}{(d(x)+\delta)^\alpha} - M_\eta, \qquad x\in \Omega^\delta,
\end{split}
\end{equation*}
where $C_\alpha := \frac{1}{\alpha} (\alpha+1)^{\alpha+1} $, $M_\eta$ to be chosen. Next, we show that $\overline{w}_{\eta,\delta}$ is a supersolution of \eqref{eq:PDEeps} in $\Omega_\delta$, while $\underline{w}_{\eta,\delta}$ is a subsolution of \eqref{eq:PDEeps} in $\Omega^\delta$. Compute
\begin{align*}
    \mathcal{L}^\varepsilon\left[\overline{w}_{\eta,\delta}\right](x) = & \frac{ (C_\alpha + \eta)\varepsilon^{\alpha+1}}{(d(x)-\delta)^\alpha} + M_\eta + \frac{(C_\alpha+\eta)^p \alpha^p\varepsilon^{\alpha+2}}{(d(x)-\delta)^{\alpha+2}}|D d(x)|^p - f(x) \\
    & - \frac{(C_\alpha+\eta)\alpha(\alpha+1)\varepsilon^{\alpha+2}}{(d(x)-\delta)^{\alpha+2}}|D d(x)|^2 + \frac{(C_\alpha+\eta)\alpha \varepsilon^{\alpha+2}}{(d(x)-\delta)^{\alpha+1}}\Delta d(x)\\
    \geq &M_\eta - f(x) + \underbrace{\frac{\nu C_\alpha\alpha(\alpha+1)\varepsilon^{\alpha+2}}{(d(x)-\delta)^{\alpha+2}}\left[\nu^{p-1}|Dd(x)|^{p} - |Dd(x)|^2 + \frac{(d(x)-\delta)\Delta d(x)}{\alpha+1}\right]}_{I},
\end{align*}
where we use $(C_\alpha\alpha)^p = C_\alpha\alpha(\alpha+1)$ and $\nu = \frac{C_\alpha+\eta}{C_\alpha} \in (1,2)$ for small $\eta$. Let
\begin{equation*}
    \delta_\eta : = \frac{\alpha+1}{K_2}\left[\nu^{p-1}-1\right]
\end{equation*}
and $\delta_\eta \to 0$ as $\eta \to 0$.
Recall the definitions in \eqref{boundond}. To get  $\mathcal{L}^\varepsilon\left[\overline{w}_{\eta,\delta}\right]\geq 0$, there are two cases to consider, depending on how large $d(x)-\delta$ is.
\begin{itemize}
    \item If $0< d(x)-\delta <\delta_\eta < \delta_0$ for $\eta$ small and fixed, then $|Dd(x)| = 1$, and thus $I\geq 0$. Hence, $\mathcal{L}^\varepsilon\left[\overline{w}_{\eta,\delta}\right]\geq 0$ if we choose $M_\eta \geq \max_{\overline{\Omega}} f$.
    \item If $d(x)-\delta\geq \delta_\eta$, then
    \begin{equation*}
        I \leq \left(\frac{1}{\delta_\eta}\right)^{\alpha+2}\nu C_\alpha \alpha(\alpha+1)\left[\nu^{p-1}K_1^{p}+K_1^2+K_2K_0\right]\varepsilon^{\alpha+2}.
    \end{equation*}

    Thus, we can choose $M_\eta = \max_{\overline{\Omega}} f + C\varepsilon^{\alpha+2}$ for $C$ large enough (depending on $\eta$) so that  $\mathcal{L}^\varepsilon\left[\overline{w}_{\eta,\delta}\right]\geq 0$.
\end{itemize}
Therefore, $\overline{w}_{\eta,\delta}$ is a supersolution in $\Omega_\delta$.

Similarly, we have
\begin{align*}
    \mathcal{L}_\varepsilon\left[\underline{w}_{\eta,\delta}\right](x) = &\frac{ (C_\alpha - \eta)\varepsilon^{\alpha+1}}{(d(x)+\delta)^\alpha} - M_\eta + \frac{(C_\alpha-\eta)^p \alpha^p\varepsilon^{\alpha+2}}{(d(x)+\delta)^{\alpha+2}}|D d(x)|^p - f(x) \\
    & - \frac{(C_\alpha-\eta)\alpha(\alpha+1)\varepsilon^{\alpha+2}}{(d(x)+\delta)^{\alpha+2}}|D d(x)|^2 + \frac{(C_\alpha-\eta)\alpha \varepsilon^{\alpha+2}}{(d(x)+\delta)^{\alpha+1}}\Delta d(x)\\
    =& - M_\eta - f(x) \\
    &+\underbrace{\frac{\nu C_\alpha\alpha(\alpha+1)\varepsilon^{\alpha+2}}{(d(x)+\delta)^{\alpha+2}}\left[\nu^{p-1}|Dd(x)|^{p} - |Dd(x)|^2 + \frac{(d(x)+\delta)\Delta d(x)}{\alpha+1}+\frac{(d(x)+\delta)^2}{\alpha(\alpha+1)\varepsilon}\right]}_{J},
\end{align*}

where $\nu = \frac{C_\alpha-\eta}{C_\alpha}\in (0,1)$ for small $\eta$. Let
\begin{equation*}
    \delta_\eta := \left(1-\nu^{p-1}\right)\left(\frac{\alpha(\alpha+1)\varepsilon}{1+K_2\alpha\varepsilon}\right)
\end{equation*}
and $\delta_\eta \to 0$ as $ \eta \to 0$.
To obtain $\mathcal{L}^\varepsilon\left[\underline{w}_{\eta,\delta}\right]\leq 0$, there are two cases to consider depending on how large $d(x)+\delta$ is.
\begin{itemize}
    \item If $0<d(x)+\delta<\delta_\eta < \delta_0$ for $\eta$ small and fixed, then $|Dd(x)| = 1$, and thus $J\leq 0$. Hence, $\mathcal{L}^\varepsilon\left[\underline{w}_{\eta,\delta}\right]\leq 0$ if we choose $M_\eta \geq -\max_{\Omega}f$.
    \item If $d(x)+\delta\geq \delta_\eta$, then
    \begin{equation*}
        |J|\leq \left(\frac{1}{\delta_\eta}\right)^{\alpha+2} \nu C_\alpha\alpha(\alpha+1)\left[\nu^{p-1}K_1^{p}+K_1^2 + \frac{(K_0+1)K_2}{\alpha+1} + \frac{(K_0+1)^2}{\alpha(\alpha+1)\varepsilon}\right]\varepsilon^{\alpha+2}
    \end{equation*}

     Thus, we can choose $M_\eta = -\max_{\overline{\Omega}} f - C\varepsilon^{\alpha+2}$ for $C$ large enough (depending on $\eta$) so that $\mathcal{L}^\varepsilon\left[\underline{w}_{\eta,\delta}\right]\leq 0$.
\end{itemize}
Therefore, $\underline{w}_{\eta,\delta}$ is a subsolution in $\Omega^\delta$.

For $p=2$, we use the ansatz $u(x) = -C_\varepsilon \log(d(x))$ instead. Similar to the previous case, one can find $u(x) = -\varepsilon\log(d(x))$. For $0<\delta<\frac{1}{2}\delta_0$,  define
\begin{equation*}
    \begin{split}
        &\overline{w}_{\eta,\delta}(x) = -(1+\eta)\varepsilon\log(d(x)-\delta) + M_\eta, \qquad x\in \Omega_\delta,\\
        &\underline{w}_{\eta,\delta}(x) = -(1-\eta)\varepsilon\log(d(x)+\delta) - M_\eta, \qquad x\in \Omega^\delta,
    \end{split}
\end{equation*}
where $M_\eta$ is to be chosen so that $\overline{w}_{\eta,\delta}(x)$ is a supersolution in $\Omega_\delta$ and $\underline{w}_{\eta,\delta}$ is a subsolution in $\Omega^\delta$. The computations are omitted here, as they are similar to the previous case.
\smallskip

\noindent We divide the rest of the proof into 3 steps. We first construct a minimal solution, then a maximal solution to \eqref{eq:PDEeps}, and finally show that they are equal to conclude the existence and the uniqueness of the solution to \eqref{eq:PDEeps}.
\smallskip

\paragraph{\textbf{Step 1.}} There exists a minimal solution $\underline{u}\in \mathrm{C}^2(\Omega)$ of \eqref{eq:PDEeps} such that $v\geq \underline{u}$ for any other solution $v\in \mathrm{C}^2(\Omega)$ solving \eqref{eq:PDEeps}.

\begin{proof} Let $w_{\eta,\delta}\in \mathrm{C}^2(\Omega)$ solve
    \begin{equation}\label{e:w_def}
    \begin{cases}
        \mathcal{L}^\varepsilon\left[w_{\eta,\delta}\right] = 0 &\qquad\text{in}\;\Omega,\\
        \qquad w_{\eta,\delta} = \underline{w}_{\eta,\delta} &\qquad\text{on}\;\partial\Omega.
    \end{cases}
    \end{equation}
    \begin{itemize}
        \item Fix $\eta>0$. As $\delta\to 0^+$, the value of $\underline{w}_{\eta,\delta}$ blows up on the boundary. Therefore, by the standard comparison principle for the second-order elliptic equation with the Dirichlet boundary, $\delta_1 \leq  \delta_2$ implies $w_{\eta,\delta_1}\geq  w_{\eta,\delta_2}$ on $\overline{\Omega}$.
        \item For $\delta'>0$, since $\underline{w}_{\eta,\delta'}$ is a subsolution in $\overline{\Omega}$ with finite boundary,
            \begin{equation}\label{e:cp_delta1}
                0<\delta \leq \delta'\qquad\Longrightarrow\qquad \underline{w}_{\eta,\delta'} \leq w_{\eta_,\delta'}\leq w_{\eta,\delta} \qquad\text{on}\;\overline{\Omega}.
            \end{equation}
        \item Similarly, since $\overline{w}_{\eta,\delta'}$ is a supersolution on $\Omega_{\delta'}$ with infinity value on the boundary $\partial\Omega_{\delta'}$, by the comparison principle,
            \begin{equation}\label{e:cp_delta2}
                w_{\eta,\delta} \leq \overline{w}_{\eta, \delta'} \qquad\text{in}\;\Omega_{\delta'} \qquad\Longrightarrow\qquad w_{\eta,\delta} \leq \overline{w}_{\eta,0} \qquad\text{in}\;\Omega.
            \end{equation}
    \end{itemize}
    \noindent From \eqref{e:cp_delta1} and \eqref{e:cp_delta2}, we have
    \begin{equation}\label{e:cp_delta3}
        0<\delta \leq \delta'\qquad\Longrightarrow\qquad \underline{w}_{\eta,\delta'} \leq w_{\eta_,\delta'}\leq w_{\eta,\delta} \leq \overline{w}_{\eta,0} \qquad\text{in}\;\Omega.
    \end{equation}
    Thus, $\{w_{\eta,\delta}\}_{\delta>0}$ is locally bounded in $L^{\infty}_{\mathrm{loc}}(\Omega)$ ($\{w_{\eta,\delta}\}_{\delta>0}$ is uniformly bounded from below). Using the local gradient estimate for $w_{\eta,\delta}$ solving \eqref{e:w_def}, we deduce that $\{w_{\eta,\delta}\}_{\delta>0}$ is locally bounded in $W^{1,\infty}_{\mathrm{loc}}(\Omega)$. Since $w_{\eta,\delta}$ solves \eqref{e:w_def}, we further have that $\{w_{\eta,\delta}\}_{\delta>0}$ is locally bounded in $W^{2,r}_{\mathrm{loc}}(\Omega)$ for all $r<\infty$ by Calderon-Zygmund estimates.

    \noindent Local boundedness of $\{w_{\eta,\delta}\}_{\delta>0}$ in $W^{2,r}_{\mathrm{loc}}(\Omega)$ implies weak$^*$ compactness, that is, there exists a function $u\in W^{2,r}_{\mathrm{loc}}(\Omega)$ such that (via subsequence and monotonicity)
    \begin{equation*}
        w_{\eta,\delta} \rup u \qquad\text{weakly in}\;W^{2,r}_{\mathrm{loc}}(\Omega),\qquad \text{and}\qquad
        w_{\eta,\delta} \to u \qquad\text{strongly in}\;W^{1,r}_{\mathrm{loc}}(\Omega).
    \end{equation*}
    In particular, $w_{\eta,\delta}\to u$ in $\mathrm{C}^1_{\mathrm{loc}}(\Omega)$ thanks to Sobolev compact embedding. Let us rewrite the equation $\mathcal{L}^\varepsilon\left[w_{\eta,\delta}\right] = 0$ as $\varepsilon\Delta w_{\eta,\delta}(x) = F[w_{\eta,\delta}](x)$ for $x \in U\subset\subset \Omega$, where
    \begin{equation*}
        F[w_{\eta,\delta}](x) =    w_{\eta,\delta}(x) + H(x,Dw_{\eta,\delta}(x)).
    \end{equation*}
    Since $w_{\eta,\delta}\to u$ in $\mathrm{C}^1(U)$ as $\delta \to 0$, we have $F[w_{\eta,\delta}](x) \to F(x)$ uniformly in $U$ as $\delta \to 0$,  where
    \begin{equation*}
        F(x) =   u(x) + H(x,Du(x)).
    \end{equation*}
    In the limit, we obtain that $u\in L^2(U)$ is a weak solution of $\varepsilon\Delta u = F$ in $U$ where $F$ is continuous. Thus, $u\in \mathrm{C}^2(\Omega)$ and by stability, $u$ solves $\mathcal{L}^\varepsilon[u] = 0$ in $\Omega$. From \eqref{e:cp_delta3}, we also have
    \begin{equation*}
        \underline{w}_{\eta,0} \leq u \leq \overline{w}_{\eta,0} \qquad\text{in}\;\Omega.
    \end{equation*}
    Moreover, $u(x)\to \infty$ as $\mathrm{dist}(x,\partial\Omega)\to 0$ with the precise rate like \eqref{rate_p<2} or \eqref{rate_p=2}. Note that by construction, $u$ may depend on $\eta$. But next, we will show that $u$ is independent of $\eta$, by proving $u$ is the unique minimal solution of $\mathcal{L}^\varepsilon[u] = 0$ in $\Omega$ with $u = +\infty$ on $\partial\Omega$.

    \noindent Let $v\in \rmC^{2}(\Omega)$ be a solution to \eqref{eq:PDEeps}. Fix $\delta>0$. Since $v(x)\to \infty$ as $x\to \partial\Omega$ while $w_{\eta,\delta}$ remains bounded on $\partial \Omega$, the comparison principle yields
    \begin{equation*}
        v\geq w_{\eta,\delta} \qquad\text{in} \; \Omega.
    \end{equation*}
    Let $\delta\to 0$ and we deduce that $v\geq u$ in $\Omega$. This concludes that $u$ is the minimal solution in $\mathrm{C}^2(\Omega)(\forall\,r<\infty)$ and thus $u$ is independent of $\eta$.
\end{proof}

\paragraph{\textbf{Step 2.}} There exists a maximal solution $\overline{u}\in \mathrm{C}^2(\Omega)$ of \eqref{eq:PDEeps} such that $v\leq \overline{u}$ for any other solution $v\in \mathrm{C}^2(\Omega)$ solving \eqref{eq:PDEeps}.

\begin{proof} For each $\delta>0$, let $u_\delta\in \mathrm{C}^2(\Omega_\delta)$ be the minimal solution to $\mathcal{L}^\varepsilon[u_\delta] = 0$ in $\Omega_\delta$ with $u_\delta = +\infty$ on $\partial\Omega_\delta$. By the comparison principle, for every $\eta>0$, there holds
\begin{equation*}
    \underline{w}_{\eta,\delta} \leq u_\delta \leq \overline{w}_{\eta,\delta} \qquad\text{in}\;\Omega_\delta,
\end{equation*}
and
\begin{equation*}
    0<\delta<\delta' \qquad \Longrightarrow\qquad u_\delta \leq u_\delta' \qquad\text{in}\;\Omega_{\delta'}\,.
\end{equation*}
The monotoniciy, together with the local boundedness of $\{u_\delta\}_{\delta>0}$ in $W^{2,r}_{\mathrm{loc}}(\Omega)$, implies that there exists $u\in W^{2,r}_{\mathrm{loc}}(\Omega)$ for all $r<\infty$ such that $u_\delta\to u$ strongly in $\rmC^1_{\mathrm{loc}}(\Omega)$. Using the equation $\mathcal{L}^\varepsilon [u_\delta] = 0$ in $\Omega_\delta$ and the regularity of Laplace's equation, we can further deduce that $u\in \mathrm{C}^2(\Omega)$ solves \eqref{eq:PDEeps} and
\begin{equation*}
    \underline{w}_{\eta,0} \leq u\leq \overline{w}_{\eta,0} \qquad\text{in}\;\Omega
\end{equation*}
for all $\eta>0$. As $u_\delta$ is independent of $\eta$ by the previous argument in Step 1, it is clear that $u$ is also independent of $\eta$. Now we show that $u$ is the maximal solution of \eqref{eq:PDEeps}. Let $v\in\rmC^2(\Omega)$ solve \eqref{eq:PDEeps}. Clearly $v\leq u_\delta$ on $\Omega_\delta$. Therefore, as $\delta \to 0$, we have $v\leq u$.
\end{proof}
\noindent In conclusion, we have found a minimal solution $\underline{u}$ and a maximal solution $\overline{u}$ in $\rmC^2(\Omega)$ such that
\begin{equation}\label{e:chain}
    \underline{w}_{\eta,0} \leq \underline{u}\leq \overline{u}\leq \overline{w}_{\eta,0} \qquad\text{in}\;\Omega
\end{equation}
for any $\eta>0$. This extra parameter $\eta$ now enables us to show that $\overline{u} = \underline{u}$ in $\Omega$. The key ingredient here is the convexity in the gradient slot of the operator.
\smallskip
\paragraph{\textbf{Step 3.}} We have $\overline{u}\equiv \underline{u}$ in $\Omega$. Therefore, the solution to \eqref{eq:PDEeps} in $\mathrm{C}^2(\Omega)$ is unique.

\begin{proof} Let $\theta\in (0,1)$. Define $w_\theta = \theta \overline{u} + (1-\theta) \inf_{\Omega} f$. It can be verified that $w_\theta$ is a subsolution to \eqref{eq:PDEeps}. Then one may argue that by the comparison principle,
\begin{equation*}
    w_\theta = \theta \overline{u} + (1-\theta)\inf_{\Omega} f\leq \underline{u} \qquad\text{in}\;\Omega,
\end{equation*}
and conclude that $\overline{u} \leq \underline{u}$ by letting $\theta\to 1$. But we have to be careful here. As they are both explosive solutions, to use the comparison principle, we need to show that $w_\theta \leq \underline{u}$ in a neighborhood of $\partial\Omega$. From \eqref{e:chain}, we see that
\begin{align*}
    &1\leq \frac{\overline{u}(x)}{\underline{u}(x)} \leq \frac{\overline{w}_{\eta,0}(x)}{\underline{w}_{\eta,0}(x)} = \frac{(C_\alpha+\eta)+ M_\eta d(x)^\alpha}{(C_\alpha-\eta)- M_\eta d(x)^\alpha},& 1<p<2,\\
    &1 \leq \frac{\overline{u}(x)}{\underline{u}(x)} \leq \frac{\overline{w}_{\eta,0}(x)}{\underline{w}_{\eta,0}(x)} = \frac{-(1+\eta)\log(d(x)) + M_\eta}{-(1-\eta)\log(d(x))-M_\eta}, & p=2,
\end{align*}
for $x\in \Omega$. Hence,
\begin{align*}
     &1\leq  \lim_{d(x)\to 0} \left(\frac{\overline{u}(x)}{\underline{u}(x)}\right) \leq \frac{C_\alpha+\eta}{C_\alpha-\eta}, & 1< p < 2,\\
     &1\leq  \lim_{d(x)\to 0} \left(\frac{\overline{u}(x)}{\underline{u}(x)}\right) \leq \frac{-(1+\eta)}{-(1-\eta)}, & p = 2.
\end{align*}
Since $\eta>0$ is chosen arbitrary, we obtain
\begin{equation*}
     \lim_{d(x)\to 0} \left(\frac{\overline{u}(x)}{\underline{u}(x)}\right) = 1.
\end{equation*}
 This means for any $\varsigma\in(0,1)$, there exists ${\delta_1}(\varsigma)>0$ small such that
\begin{equation*}
\frac{\overline{u}(x)}{\underline{u}(x)}\leq (1+\varsigma)\Longrightarrow \left(\frac{1}{1+\varsigma}\right)\overline{u}(x) \leq \underline{u}(x) \qquad\text{in}\; \Omega\backslash \Omega_{\delta_1}.
\end{equation*}
For a fixed $\theta\in (0,1)$, one can always choose $\varsigma$ small enough so that $\displaystyle (1+\varsigma)^{-1} \geq   \frac{1+\theta}{2}$. Since $\overline{u}(x) \to +\infty$ as $d(x) \to 0$, there exists $\delta_2 > 0$ such that $\overline{u}(x) \geq 2 \inf_\Omega f$ for all $x \in \Omega \setminus \Omega_{\delta_2}$.
Now we have
\begin{equation*}
    \underline{u}(x)\geq \left(\frac{1}{1+\varsigma}\right)\overline{u}(x) \geq \theta \overline{u}(x)+  \left( \frac{1-\theta}{2}\right)\overline{u}(x)\geq \theta \overline{u}(x) + (1-\theta)\left(\inf_\Omega f\right)
\end{equation*}
 for all $x \in \Omega \setminus \Omega_\delta$ where $\delta: = \min \{\delta_1, \delta_2\}$. This implies for any fixed $\theta\in(0,1)$, $w_\theta \leq \underline{u}$ in a neighborhood of $\partial\Omega$. Hence, by the comparison principle,
\begin{equation*}
    w_\theta = \theta \overline{u} + (1-\theta)\inf_{\Omega} f\leq \underline{u} \qquad\text{in}\;\Omega,
\end{equation*}
for any $\theta \in (0,1)$. Then let $\theta \to 1$ to get the conclusion.
\end{proof}

\noindent This finishes the proof of the well-posedness of \eqref{eq:PDEeps} for $1<p\leq2$.
\end{proof}

\begin{proof}[Proof of Lemma \ref{lem:max}] The proof is a variation of Perron's method (see \cite{Capuzzo-Dolcetta1990}) and we proceed by contradiction. Let $\varphi\in \rmC(\overline{\Omega})$ and $x_0\in \overline{\Omega}$ such that $u(x_0) = \varphi(x_0)$ and $u-\varphi$ has a global strict minimum over $\overline{\Omega}$ at $x_0$ with
\begin{equation}\label{eq:max_a1}
      \varphi(x_0) + H(x_0,D\varphi(x_0)) < 0.
\end{equation}
Let $\varphi^\varepsilon(x) = \varphi(x) - |x-x_0|^2 + \varepsilon$ for $x\in \overline{\Omega}$. Let $\delta > 0$. We see that for $x\in \partial B(x_0,\delta)\cap \overline{\Omega}$,
\begin{equation*}
    \varphi^\varepsilon(x) = \varphi(x) - \delta^2 +\varepsilon \leq \varphi(x) - \varepsilon
\end{equation*}
if $2\varepsilon \leq \delta^2$. We observe that
\begin{equation*}
    \begin{split}
    \varphi^\varepsilon(x) - \varphi(x_0)  &= \varphi(x)-\varphi(x_0) + \varepsilon - |x-x_0|^2 \\
    D\varphi^\varepsilon(x) - D\varphi(x_0) &= D\varphi(x) - D\varphi(x_0) - 2(x-x_0)
    \end{split}
\end{equation*}
for $x\in B(x_0,\delta)\cap \overline{\Omega}$. By the continuity of $H(x,p)$ near $(x_0,D\varphi(x_0))$ and the fact that $\varphi\in \rmC^1(\overline{\Omega})$, we can deduce from \eqref{eq:max_a1} that if $\delta$ is small enough and $0<2\varepsilon < \delta^2$, then
\begin{equation}\label{eq:max_a2}
      \varphi^\varepsilon(x)+H(x,D\varphi^\varepsilon(x)) < 0 \qquad\text{for}\;x\in B(x_0,\delta)\cap \overline{\Omega}.
\end{equation}
We have found $\varphi^\varepsilon\in \mathrm{C}^1(\overline{\Omega})$ such that $\varphi^\varepsilon(x_0)>u(x_0)$, $\varphi^\varepsilon<u$ on $\partial B(x_0,\delta)\cap \overline{\Omega}$ and \eqref{eq:max_a2}. Let
\begin{equation*}
    \tilde{u}(x) = \begin{cases}
    \max \big\lbrace u(x),\varphi^\varepsilon(x) \big\rbrace &x\in B(x_0,\delta)\cap \overline{\Omega},\\
    u(x)&x\notin B(x_0,\delta)\cap \overline{\Omega}.\\
    \end{cases}
\end{equation*}
We see that $\tilde{u}\in \rmC(\overline{\Omega})$ is a subsolution of \eqref{eq:PDE0} in $\Omega$ with $\tilde{u}(x_0) > u(x_0)$, which is a contradiction. Thus, $u$ is a supersolution of \eqref{eq:PDE0} on $\overline{\Omega}$.
\end{proof}

\subsection{Semiconcavity}
We present a proof for the semiconcavity of solution to first-order Hamilton--Jacobi equation using the doubling variable method (see also \cite{Calder2021}).
\begin{thm}\label{convex} Let $H(x,p) = G(p)-f(x)$ where $G\geq 0$ with $G(0) = 0$ is a convex function from $\R^n\to\R^n$ and $f\in \mathrm{C}^2_c(\R^n)$. Let $u\in \mathrm{C}_c(\R^n)$ be a viscosity solution to $u+H(x,Du) = 0$ in $\R^n$. Then $u$ is semiconcave, i.e., $u$ is a viscosity solution of  $-D^2u \succ -c\;\mathbb{I}_n$ in $\R^n$ where
\begin{equation*}
    c = \max \big\lbrace  D_{\xi\xi}f(x): |\xi|=1, x\in \R^n \big\rbrace\geq 0.
\end{equation*}
\end{thm}
\begin{proof} Consider the auxiliary functional
\begin{equation*}
    \Phi(x,y,z) = u(x)-2u(y)+u(z) - \frac{\alpha}{2}|x-2y+z|^2 - \frac{c}{2}|y-x|^2-\frac{c}{2}|y-z|^2
\end{equation*}
for $(x,y,z)\in \R^n\times\R^n\times\R^n$. By the a priori estimate, $u$ is bounded and Lipschitz. Thus, we can assume $\Phi$ achieves its maximum over $\R^n\times\R^n\times \R^n$ at $(x_\alpha,y_\alpha,z_\alpha)$. The viscosity solution tests give us
\begin{align*}
    &u(x_\alpha) + G\big(p_\alpha+c(x_\alpha-y_\alpha)\big) \leq f(x_\alpha)\\
    &u(z_\alpha) + G\big(p_\alpha+c(z_\alpha-y_\alpha)\big) \leq f(z_\alpha)\\
    &u(y_\alpha) + G\left(p_\alpha+\frac{c}{2}(x_\alpha-y_\alpha) + \frac{c}{2}(z_\alpha-y_\alpha)\right) \geq f(y_\alpha),
\end{align*}
where $p_\alpha = \alpha(x_\alpha-2y_\alpha+z_\alpha)$. By the convexity of $G$, we have
\begin{equation*}
    2G\left(p_\alpha+\frac{c}{2}(x_\alpha-y_\alpha) + \frac{c}{2}(z_\alpha-y_\alpha)\right) \leq G\big(p_\alpha+c(x_\alpha-y_\alpha)\big)  + G\big(p_\alpha+c(z_\alpha-y_\alpha)\big)
\end{equation*}
Therefore,
\begin{equation*}
    u(x_\alpha) - 2u(y_\alpha) + u(z_\alpha) \leq f(x_\alpha) - 2f(y_\alpha) + f(z_\alpha).
\end{equation*}
\begin{itemize}
    \item $\Phi(x_\alpha,y_\alpha,z_\alpha)\geq \Phi(0,0,0)$ gives us
    \begin{equation*}
        \frac{\alpha}{2}|x_\alpha-2y_\alpha+z_\alpha|^2 + \frac{c}{2}|y_\alpha-x_\alpha|^2+\frac{c}{2}|y_\alpha-z_\alpha|^2 \leq C.
    \end{equation*}
    Thus, $(x_\alpha-y_\alpha)\to h_0$ and $(y_\alpha-z_\alpha)\to h_0$ as $\alpha\to \infty$ for some $h_0 \in \mathbb{R}^n$.

    \item $\Phi(x_\alpha,y_\alpha,z_\alpha )\geq \Phi(y_\alpha+h_0,y_\alpha,y_\alpha-h_0)$ gives us
    \begin{align*}
        u(x_\alpha)-2u(y_\alpha) + u(z_\alpha) - \frac{\alpha}{2}|x_\alpha - 2y_\alpha+z_\alpha|^2 - \frac{c}{2}|x_\alpha-y_\alpha|^2 - \frac{c}{2}|y_\alpha-z_\alpha|^2 \\
        \geq u(y_\alpha+h_0) - 2u(y_\alpha) + u(y_\alpha-h_0) - c|h_0|^2.
    \end{align*}
    Therefore, by the fact that $u$ is Lipschitz, we have
    \begin{equation*}
    \begin{split}
        \frac{\alpha}{2}|x_\alpha - 2y_\alpha+z_\alpha|^2
        \leq & c\left(\frac{2|h_0|^2 - |x_\alpha-y_\alpha|^2 - |y_\alpha-z_\alpha|^2}{2}\right) \\
        &+ C\Big(|(x_\alpha-y_\alpha) - h_0| + |(z_\alpha - y_\alpha) + h_0|\Big) \to 0
    \end{split}
    \end{equation*}
    as $\alpha\to \infty$.
\end{itemize}
For any $x\in \R^n$, we have $\Phi(x_\alpha,y_\alpha,z_\alpha) \geq \Phi(x+h,x,x-h)$, i.e.,
\begin{align*}
    u(x+h)-2u(x)+u(x-h)-c|h|^2 \leq &f(x_\alpha)-2f(y_\alpha) + f(z_\alpha) \\
    &- \frac{\alpha}{2}|x_\alpha - 2y_\alpha+z_\alpha|^2-\frac{c}{2}|y_\alpha-x_\alpha|^2-\frac{c}{2}|y_\alpha-z_\alpha|^2.
\end{align*}
If $\{y_\alpha\}$ is unbounded, then since $f\in \mathrm{C}_c^2(\R^n)$, we have $f(y_\alpha)\to 0$ as $\alpha\to \infty$. As a consequence, $x_\alpha,z_\alpha \to \infty$ as well and thus $f(x_\alpha)-2f(y_\alpha) + f(z_\alpha)\to 0$ as $\alpha\to \infty$. Therefore,
\begin{equation*}
    u(x+h)-2u(x)+u(x-h)-c|h|^2 \leq 0.
\end{equation*}
If $\{y_\alpha\}$ is bounded, then $y_\alpha\to y_0$ for some $y_0 \in \mathbb{R}^n$ as $\alpha\to \infty$. Thus,
\begin{equation*}
     u(x+h)-2u(x)+u(x-h)-c|h|^2 \leq f(y_0+h_0) - 2f(y_0) + f(y_0-h_0) -c|h_0|^2.
\end{equation*}
Let $\xi = h_0$ and we have
\begin{equation*}
    \begin{cases}
      f(y_0+h_0) - f(y_0) = \displaystyle\int_0^1 D_x f(y_0 + t\xi)\cdot \xi dt, \vspace{0.2cm}\\
      f(y_0) - f(y_0-h_0)  = \displaystyle\int_0^1 D_x f(y_0 - \xi + t\xi)\cdot \xi dt.
    \end{cases}
\end{equation*}
Therefore,
\begin{equation*}
\begin{split}
     f(y_0+h_0) - 2f(y_0) + f(y_0-h_0) &= \int_0^1 \Big(D_x f(y_0 + t\xi) - D_x f(y_0 - \xi + t\xi) \Big)\cdot \xi dt\\
     &=  \int_0^1 \int_0^1 \xi^\mathsf{T} D^2 f(y_0-\xi +t\xi+s \xi) \xi\;dsdt.
\end{split}
\end{equation*}
which implies
\begin{equation*}
    |f(y_0+h_0) - 2f(y_0) + f(y_0-h_0)| \leq \left(\max_{|\xi|=1}D_{\xi\xi} f\right)|\xi|^2.
\end{equation*}
Hence,
\begin{equation*}
    u(x+h)-2u(x)+u(x-h)-c|h|^2 \leq 0
\end{equation*}
and thus $u$ is semiconcave. It is easy to see that if $\varphi$ is smooth and $u-\varphi$ has a local min at $x$, then $D^2\varphi(x)\prec c\;\mathbb{I}$, i.e., $-D^2\varphi(x)\geq -c\;\mathbb{I}$.
\end{proof}

\begin{proof}[Proof of Theorem \ref{thm:newsemi}]\quad
\begin{itemize}
    \item[(i)] It is clear that
    \begin{equation*}
    \tilde{u}(x) = \begin{cases}
      u(x) &\qquad\text{if}\;x\in \overline{\Omega},\\
      0 &\qquad\text{if}\;x\notin \overline{\Omega},
    \end{cases}
    \end{equation*}
    solves the equation $\tilde{u}(x)+|D\tilde{u}(x)|^p - \tilde{f}(x) = 0$ in $\R^n$. By the exact same argument in the proof of Theorem \ref{convex}, the conclusion follows.
    \item[(ii)]
Fix $x \in \Omega$ and let $\eta$ be a minimizing curve for $u(x)$. Then
$$u(x)=\int^\infty_0 e^{-s} \left(C_q|\dot{\eta}(s)|^q +f(\eta(s)) \right) ds .$$
Since $\eta(0)=x \in \Omega$, then there exists $T > 0$ such that $\eta(s) \in \Omega, \forall 0 \leq s \leq T$. In fact, we can choose $\displaystyle T \geq \frac{d(x)}{C_0}$ for some constant $C_0$ independent of $x$, since $\left\|\dot{\eta}\right\|_\infty \leq C$ where $C$ is independent of $x$.
Note that
\begin{equation}\label{x}
    u(x)=\int_0^{T} e^{-s}\left( C_q|\dot{\eta}(s)|^q +f(\eta(s)) \right)ds + e^{-T} u(\eta(T)).
\end{equation}
Define $\Tilde{\eta}:[0, +\infty)\to \mathbb{R}^n$ by
\begin{equation*}
    \Tilde{\eta}(s):=\left\{
    \begin{aligned}
    &\eta(s)+\left( 1-\frac{s}{T} \right) h, \quad \text{if }  0\leq s\leq T,\\
    &\eta(s),\qquad \qquad \qquad \quad  \text{if } s \geq T.
    \end{aligned}
    \right.
\end{equation*}
Choose $h$ small enough so that $\Tilde{\eta}(s) \in \Omega, \forall s\geq 0$.
(This can be done because there exists $r>0$ such that $B(\eta(s),r) \subset \Omega$, for all $0 \leq s \leq T$.)
By the optimal control formula of $u(x+h)$ and $u(x-h)$, we have
\begin{equation} \label{h+}
    u(x+h)\leq \int_0^{T} e^{-s}\left( C_q\left|\dot{\eta}(s)-\frac{h}{T}\right|^q +f \left(\eta(s)+\left(1-\frac{s}{T}\right)h\right) \right)ds + e^{-T} u(\eta(T)),
\end{equation}
and
\begin{equation}\label{h-}
    u(x-h)\leq \int_0^{T} e^{-s}\left( C_q \left|\dot{\eta}(s)+\frac{h}{T} \right|^q +f \left(\eta(s)-\left(1-\frac{s}{T} \right)h\right) \right)ds + e^{-T} u(\eta(T)).
\end{equation}

Hence, from \eqref{x}, \eqref{h+}, and \eqref{h-}, for $h$ small enough,
\begin{equation}\label{diff}
\begin{aligned}
    & u(x+h)+u(x-h)-2u(x) \\
    \leq & \int_0^T e^{-s} C_q \left( \left|\dot{\eta}(s)-\frac{h}{T}\right|^q + \left|\dot{\eta}(s)+\frac{h}{T} \right|^q -2\left|\dot{\eta}(s) \right|^q\right) ds\\
    &+\int_0^Te^{-s}\left(f \left(\eta(s)+\left(1-\frac{s}{T}\right)h\right) + f \left(\eta(s)-\left(1-\frac{s}{T} \right)h\right) -2f\left(\eta(s)\right) \right)ds\\
    \leq & \int_0^T e^{-s} C_q \left( \left|\dot{\eta}(s)-\frac{h}{T}\right|^q + \left|\dot{\eta}(s)+\frac{h}{T} \right|^q -2\left|\dot{\eta}(s)\right|^q\right) ds\\
    &+C |h|^2 \int_0^Te^{-s} \left( 1-\frac{s}{T}\right)^2 ds,
\end{aligned}
\end{equation}
where the second inequality follows from the semiconcavity of $f$.
By Taylor's theorem, for any $y \in \mathbb{R}^n$,
\begin{equation}
\begin{aligned}
     \left|y+\frac{1}{T}h\right|^q = &|y|^q+q|y|^{q-2}y \cdot \frac{1}{T} h +\int^1_0 q(q-2) \left|y +\frac{t}{T} h \right|^{q-4} \left( \left(y+\frac{t}{T}h\right) \cdot \frac{h}{T}\right)^2(1-t)dt\\
     &+\int^1_0q\left|y+\frac{t}{T}h\right|^{q-2}\left|\frac{h}{T}\right|^2(1-t)dt\\
     \leq &|y|^q+q|y|^{q-2}y \cdot \frac{1}{T} h + C\int^1_0 \left|y +\frac{t}{T} h \right|^{q-2}\left|\frac{h}{T}\right|^2dt\\
     \leq &|y|^q+q|x|^{q-2}y \cdot \frac{1}{T} h + C\left(|y|^{q-2}\left|\frac{h}{T}\right|^2+\left|\frac{h}{T}\right|^q \right)
\end{aligned}
\end{equation}
and similarly
\begin{equation}
    \begin{aligned}
    \left|y-\frac{1}{T}h\right|^q = &|y|^q-q|y|^{q-2}y \cdot \frac{1}{T} h + \int^1_0 q(q-2) \left|y-\frac{t}{T} h \right|^{q-4} \left( \left(y-\frac{t}{T}h\right) \cdot \frac{h}{T}\right)^2(1-t)dt\\
     &+\int^1_0q\left|y-\frac{t}{T}h\right|^{q-2}\left|\frac{h}{T}\right|^2(1-t)dt\\
      \leq & |y|^q-q|x|^{q-2}y \cdot \frac{1}{T} h + C\left(|y|^{q-2}\left|\frac{h}{T}\right|^2+\left|\frac{h}{T}\right|^q \right),
    \end{aligned}
\end{equation}

which implies
\begin{equation}\label{eta}
    \left|\dot{\eta}(s)-\frac{h}{T}\right|^q + \left|\dot{\eta}(s)+\frac{h}{T} \right|^q -2\left|\dot{\eta}\right|^q\leq C \left(\left|\frac{h}{T}\right|^2 +\left|\frac{h}{T}\right|^q \right)\leq C\left|\frac{h}{T}\right|^2
\end{equation}
where $q\geq 2$, $C=C(q, \|\dot{\eta}\|_\infty)$, and $h$ is chosen to be small enough so that $\displaystyle \left|\frac{h}{T}\right| \leq 1$.

Plugging \eqref{eta} into \eqref{diff}, we get
\begin{equation}
\begin{aligned}
     u(x+h)+u(x-h)-2u(x) &\leq C|h|^2 \int_0^T \frac{e^{-s}}{T^2}ds +C |h|^2 \int_0^Te^{-s} \left( 1-\frac{s}{T}\right)^2 ds \\
     & \leq C\frac{|h|^2}{T} \int_0^1e^{-sT}ds+C|h|^2 \int_0^T e^{-s}ds\\
     &\leq C\left(1+\frac{1}{T}\right)|h|^2\\
     &\leq C\left(1 + \frac{1}{d(x)}\right)|h|^2\\
     &\leq \frac{C}{d(x)}|h|^2
\end{aligned}
\end{equation}
since $\displaystyle T \geq \frac{d(x)}{C_0}$.
\end{itemize}
\end{proof}


\stoptocwriting
\section*{Acknowledgement}
The authors would like to express their appreciation to Hung V. Tran for his invaluable guidance. The authors would also like to thank Dohyun Kwon for useful discussions.
\resumetocwriting


\end{document}